\documentclass[11pt]{article}
\usepackage{amsfonts,amssymb,color}
\usepackage[mathscr]{eucal}
\usepackage{amsmath, amsthm}
\usepackage{mathrsfs}
\usepackage{amsbsy}
\usepackage{dsfont}
\usepackage{bbm}
\usepackage{wasysym}
\usepackage{stmaryrd}
\usepackage{upgreek}
\usepackage{hyperref}
\usepackage{comment}
\input xypic
\xyoption{all}
\begin{document}
\baselineskip = 5mm
\newcommand \primes {{\bf P}}
\newcommand \ZZ {{\mathbb Z}}
\newcommand \FF {{\mathbb F}}
\newcommand \NN {{\mathbb N}}
\newcommand \QQ {{\mathbb Q}}
\newcommand \RR {{\mathbb R}}
\newcommand \CC {{\mathbb C}}
\newcommand \PR {{\mathbb P}}
\newcommand \AF {{\mathbb A}}
\newcommand \VV {{\mathbb V}}
\newcommand \ud {{\Bbbk }}
\newcommand \bgf {{\mathbb K}}
\newcommand \bcA {{\mathscr A}}
\newcommand \bcB {{\mathscr B}}
\newcommand \bcC {{\mathscr C}}
\newcommand \bcD {{\mathscr D}}
\newcommand \bcE {{\mathscr E}}
\newcommand \bcF {{\mathscr F}}
\newcommand \bcG {{\mathscr G}}
\newcommand \bcH {{\mathscr H}}
\newcommand \bcM {{\mathscr M}}
\newcommand \bcN {{\mathscr N}}
\newcommand \bcI {{\mathscr I}}
\newcommand \bcJ {{\mathscr J}}
\newcommand \bcK {{\mathscr K}}
\newcommand \bcL {{\mathscr L}}
\newcommand \bcO {{\mathscr O}}
\newcommand \bcP {{\mathscr P}}
\newcommand \bcQ {{\mathscr Q}}
\newcommand \bcR {{\mathscr R}}
\newcommand \bcS {{\mathscr S}}
\newcommand \bcT {{\mathscr T}}
\newcommand \bcU {{\mathscr U}}
\newcommand \bcV {{\mathscr V}}
\newcommand \bcW {{\mathscr W}}
\newcommand \bcX {{\mathscr X}}
\newcommand \bcY {{\mathscr Y}}
\newcommand \bcZ {{\mathscr Z}}
\newcommand \goa {{\mathfrak a}}
\newcommand \gob {{\mathfrak b}}
\newcommand \goc {{\mathfrak c}}
\newcommand \god {{\mathfrak d}}
\newcommand \gom {{\mathfrak m}}
\newcommand \gon {{\mathfrak n}}
\newcommand \goo {{\mathfrak o}}
\newcommand \gop {{\mathfrak p}}
\newcommand \goq {{\mathfrak q}}
\newcommand \goA {{\mathfrak A}}
\newcommand \goB {{\mathfrak B}}
\newcommand \goT {{\mathfrak T}}
\newcommand \goC {{\mathfrak C}}
\newcommand \goD {{\mathfrak D}}
\newcommand \goM {{\mathfrak M}}
\newcommand \goN {{\mathfrak N}}
\newcommand \goO {{\mathfrak O}}
\newcommand \goP {{\mathfrak P}}
\newcommand \goQ {{\mathfrak Q}}
\newcommand \goS {{\mathfrak S}}
\newcommand \goH {{\mathfrak H}}
\newcommand \ga {{\mathfrak a}}
\newcommand \gb {{\mathfrak b}}
\newcommand \gc {{\mathfrak c}}
\newcommand \gd {{\mathfrak d}}
\newcommand \gm {{\mathfrak m}}
\newcommand \gn {{\mathfrak n}}
\newcommand \gp {{\mathfrak p}}
\newcommand \gq {{\mathfrak q}}
\newcommand \gQ {{\mathfrak Q}}
\newcommand \gP {{\mathfrak P}}
\newcommand \gT {{\mathfrak T}}
\newcommand \gC {{\mathfrak C}}
\newcommand \gD {{\mathfrak D}}
\newcommand \gM {{\mathfrak M}}
\newcommand \gS {{\mathfrak S}}
\newcommand \gH {{\mathfrak H}}
\newcommand \gA {{\rm {A}}}
\newcommand \gB {{\rm {B}}}
\newcommand \catC {{\sf C}}
\newcommand \catD {{\sf D}}
\newcommand \catF {{\sf F}}
\newcommand \catG {{\sf G}}
\newcommand \catE {{\sf E}}
\newcommand \catI {{\sf I}}
\newcommand \catS {{\sf S}}
\newcommand \catW {{\sf W}}
\newcommand \catX {{\sf X}}
\newcommand \catY {{\sf Y}}
\newcommand \catZ {{\sf Z}}
\newcommand \Sets {{\sf Sets}}
\newcommand \Sch {{\sf Sch }}
\newcommand \Funct {{\rm Funct}}
\newcommand \PShv {{\rm PSh}}
\newcommand \Shv {{\rm Shv }}
\newcommand \APSh {{\sf APSh}}
\newcommand \AShv {{\sf AShv }}
\newcommand \Ringspace {{\sf {Ringspace}}}
\newcommand \Nor {{\sf Nor}}
\newcommand \Seminor {{\sf sNor}}
\newcommand \Reg {{\sf Reg}}
\newcommand \Sm {{\sf Sm}}
\newcommand \SmProj {{\sf SmProj}}
\newcommand \NorCon {{\sf NorCon}}
\newcommand \Noe {{\sf Noe}}
\newcommand \LocNoe {{\sf LocNoe}}
\newcommand \Stk {{\sf Stk}}
\newcommand \Mon {{\rm Mon }}
\newcommand \Mod {{\sf Mod}}
\newcommand \Ab {{\rm Ab }}
\newcommand \Ind {{\sf {Ind}}}
\newcommand \Vect {{\sf Vect}}
\newcommand \MM {{\sf MM}}
\newcommand \HS {{\sf HS}}
\newcommand \MHS {{\sf MHS}}
\newcommand \Zar {\rm {Zar}}
\newcommand \Nis {\rm {Nis}}
\newcommand \Nen {\rm {N\acute en}}
\newcommand \cdh {\rm {cdh}}
\newcommand \h {\rm {h}}
\newcommand \fppf {{\rm {fppf}}}
\newcommand \et {\rm {\acute e t}}
\newcommand \CHM {{\sf Chow}}
\newcommand \DM {{\sf DM}}
\newcommand \Ta {{\mathbbm T}}
\newcommand \uno {{\mathbbm 1}}
\newcommand \Le {{\mathbbm L}}
\newcommand \ptr {{\pi _2^{\rm tr}}}
\newcommand \Spec {{\rm {Spec}}}
\newcommand \bSpec {{\bf {Spec}}}
\newcommand \Pic {{\rm {Pic}}}
\newcommand \PicF {{\it {Pic}}}
\newcommand \PicS {{\rm {Pic}}}
\newcommand \Jac {{\rm {Jac}}}
\newcommand \Alb {{\rm {Alb}}}
\newcommand \alb {{\rm {alb}}}
\newcommand \NS {{{\rm NS}}}
\newcommand \Corr {{Corr}}
\newcommand \Sym {{\rm {Sym}}} 
\newcommand \Symcl {{\rm {S}}} 
\newcommand \Alt {{\rm {Alt}}}
\newcommand \Prym {{\rm {Prym}}}
\newcommand \HilbF {{\rm Hilb}}
\newcommand \HilbS {{\rm Hilb}}
\newcommand \Mor {{\rm Mor}}
\newcommand \MorF {{\rm Mor}}
\newcommand \MorS {{\rm Mor}}
\newcommand \pdiv {{\rm {div}}}
\newcommand \Bl {{\rm {Bl}}}
\newcommand \codim {{\rm {codim}}}
\newcommand \Proj {{\rm {Proj}}}
\newcommand \bProj {{\bf {Proj}}}
\newcommand \Div {{\rm {Div}}}
\newcommand \prim {{\rm {prim}}}
\newcommand \stalk {{\rm st}}
\newcommand \seminorm {{\rm {sn}}}
\newcommand \hc {{\rm hc}} 
\newcommand \BC {{\rm {BC}}}
\newcommand \cnj {{\rm {C}}}
\newcommand \Res {{\rm {Res}}}
\newcommand \tor {{\rm {tor}}}
\newcommand \Cycl {{\it Cycl }}
\newcommand \PropCycl {{\it PropCycl }}
\newcommand \cycl {{\it cycl }}
\newcommand \PrimeCycl {{\it PrimeCycl }}
\newcommand \PrimePropCycl {{\it PrimePropCycl }}
\newcommand \card {{\rm {card}}}
\newcommand \cone {{\rm {cone}}}
\newcommand \cha {{\rm {char}}}
\newcommand \eff {{\rm {eff}}}
\newcommand \cl {{\rm {cl}}}
\newcommand \tr {{\rm {tr}}}
\newcommand \pr {{\rm {pr}}}
\newcommand \ev {{e}}
\newcommand \interior {{\rm {Int}}}
\newcommand \sep {{\rm {sep}}}
\newcommand \td {{\rm {tdeg}}}
\newcommand \alg {{\rm {alg}}}
\newcommand \im {{\rm im}}
\newcommand \rd {{\rm {red}}}
\newcommand \hl {{\rm h}}
\newcommand \shl {{\rm sh}}
\newcommand \op {{\rm op}}
\newcommand \Hom {{\rm Hom}}
\newcommand \uHom {{\underline {\rm Hom}}}
\newcommand \cHom {{\mathscr H\! }{\it om}}
\newcommand \Ext {{\rm Ext}}
\newcommand \cExt {{\mathscr E\! }{\it xt}}
\newcommand \colim {{{\rm colim}\, }} 
\newcommand \End {{\rm {End}}}
\newcommand \coker {{\rm {coker}}}
\newcommand \id {{\rm {id}}}
\newcommand \van {{\rm {van}}}
\newcommand \spc {{\rm {sp}}}
\newcommand \Ob {{\rm Ob}}
\newcommand \Aut {{\rm Aut}}
\newcommand \cor {{\rm {cor}}}
\newcommand \res {{\rm {res}}}
\newcommand \Gal {{\rm {Gal}}}
\newcommand \PGL {{\rm {PGL}}}
\newcommand \Gr {{\rm {Gr}}}
\newcommand \Tor {{\rm {Tor}}}
\newcommand \Sing {{\rm {Sing}}}
\newcommand \spn {{\rm {span}}}
\newcommand \univ {{\rm {\, univ}}}
\newcommand \Nm {{\rm {Nm}}}
\newcommand \fin {{\rm {f}}}
\newcommand \inv {{\rm {inv}}}
\newcommand \even {{\rm {even}}}
\newcommand \md {{\rm {mod}\, }}
\newcommand \sg {{\Sigma }}
\newcommand \ind {{\rm {ind}}}
\newcommand \Gm {{{\mathbb G}_{\rm m}}}
\newcommand \trdeg {{\rm {tr.deg}}}
\newcommand \con {\rm {conn}}
\newcommand \sv {{\rm {sv}}}
\newcommand \sing {{\rm {sing}}}
\newcommand \tame {\rm {t}}
\newcommand \eq {{\rm {eq}}}
\newcommand \length {{\rm {length}}}
\newcommand \ord {{\rm {ord}}}
\newcommand \shf {{\rm {a}}}
\newcommand \spd {{\rm {s}}}
\newcommand \glue {{\rm {g}}}
\newcommand \equi {{\rm {equi}}}
\newcommand \ab {{\rm {ab}}}
\newcommand \add {{\rm {ad}}}
\newcommand \Fix {{\rm {Fix}}}
\newcommand \pty {{\mathbf P}}
\newcommand \type {{\mathbf T}}
\newcommand \trp {{\rm {t}}}
\newcommand \cat {{\rm {cat}}}
\newcommand \deop {{\Delta \! }^{op}\, }
\newcommand \defect {{\rm {def}}}
\newcommand \aff {{\rm {aff}}}
\newcommand \Const {{\rm {Const}}}
\newcommand \num {{\rm {num}}}
\newcommand \conv {{\it {cv}}}
\newcommand \nil {{\rm {nil}}}
\newcommand \rat {{\rm rat}}
\newcommand \gen {{\rm gen}}
\newcommand \SG {{\rm SG}}
\newcommand \tors {{\rm {tors}}}
\newcommand \coeq {{{\rm coeq}\, }}
\newcommand \supp {{\rm Supp}}
\newcommand \sm {{\rm sm}}
\newcommand \reg {{\rm reg}}
\newcommand \nor {{\rm nor}}
\newcommand \noe {{\rm Noe}}
\newcommand \var {{\rm var}}
\newcommand \norm {{\rm {norm}}}
\newcommand \tp {{\rm {tp}}}
\newcommand \st {{\rm {st}}}
\newcommand \Gys {{{G}}}
\newcommand \Fr {{\rm {Fr}}}
\newcommand \hol {{\rm {h}}}
\newcommand \reduced {{\rm {red}}}
\newcommand \ttop {{\rm top}}
\newcommand \AJ {{\rm AJ}}
\newcommand \Rat {{\mathscr Rat}}
\newcommand \tdf {{\mathbf {t}}}
\newcommand \term {{*}}
\newcommand \znak {{\natural }}
\newcommand \znakk {{\sharp }}
\newcommand \znakkk {{\flat }}
\newcommand \qand {{\quad \hbox{and}\quad }}
\newcommand \qqand {{\quad \quad \hbox{and}\quad \quad }}
\newcommand \qqqand {{\quad \quad \quad \hbox{and}\quad \quad \quad }}
\newcommand \heither {{\hbox{either}\quad }}
\newcommand \qor {{\quad \hbox{or}\quad }}
\newcommand \qqor {{\quad \quad \hbox{or}\quad \quad }}
\newcommand \lra {\longrightarrow}
\newcommand \hra {\hookrightarrow}
\def\blue {\color{blue}}
\def\red {\color{red}}
\def\green {\color{green}}
\newtheorem{theorem}[subsubsection]{Theorem}
\newtheorem{lemma}[subsubsection]{Lemma}
\newtheorem{corollary}[subsubsection]{Corollary}
\newtheorem{proposition}[subsubsection]{Proposition}
\newtheorem{remark}[subsubsection]{Remark}
\newtheorem{definition}[subsubsection]{Definition}
\newtheorem{conjecture}[subsubsection]{Conjecture}
\newtheorem{example}[subsubsection]{Example}
\newtheorem{question}[subsubsection]{Question}
\newtheorem{mycomment}[subsubsection]{Comment}
\newtheorem{assumption}[subsubsection]{Assumption}
\newtheorem{fact}[subsubsection]{Fact}
\newtheorem{crucialquestion}[subsubsection]{Crucial Question}
\newtheorem{claim}[subsubsection]{Claim}
\newtheorem{aim}[subsubsection]{Aim}
\newenvironment{pf}{\par\noindent{\em Proof}.}{\hfill\framebox(6,6)
\par\medskip}

\title{\bf Arithmetic of Gysin kernels}





\author{\sc Vladimir Guletski\u \i \, {\small \it and}\,  Bo Zhang}


\maketitle

\begin{abstract}
\noindent Let $k$ be a field, and let $X$ be a smooth projective surface over $k$. Fix a Lefschetz pencil on $X$, and let $C$ be its fibre at the generic point of $\PR ^1$. The closed immersion of $C$ in to $X_{k(t)}$ induces the Gysin homomorphism from the Jacobian $A$ of the curve $C$ to the Chow group $A_0(X_{k(t)})$ of $0$-cycles of degree $0$ on $X_{k(t)}$. Embedding $k(t)$ in to an uncountable universal domain $\ud $, we obtain the corresponding homomorphism from $A(\ud )$ to $A_0(X_{\ud })$, whose kernel is either countable or the union of translates of a certain abelian subvariety inside $A_{\ud }$, due to the Deligne-Katz irreducibility of monodromy action on vanishing cycles. We prove three dichotomy theorems on the structure of the kernel of Gysin homomorphism on $0$-cycles, in terms of \'etale monodromy action, and working over a field of arbitrary characteristic.
\end{abstract}
















\tableofcontents

\section{Introduction}
\label{intro}

Let $k$ be a field, let $X$ be a smooth projective surface over $k$, and let $C$ be a smooth projective curve inside $X$, such that $C(k)\neq \emptyset $. The closed immersion $\iota $ of $C$ in to $X$ indices the push-forward Gysin homomorphism 
  $$
  \iota _*:A_0(C)\to A_0(X)
  $$ 
on Chow groups of $0$-cycles of degree $0$ modulo rational equivalence, and the source group is canonically isomorphic to the group $A(k)$ of $k$-rational points of the Jacobian        
  $$
  A=\Jac _{C/k}
  $$ 
of the curve $C$ over $k$. Therefore, we obtain the push-forward Gysin homomorphism 
  $$
  \iota _*:A(k)\to A_0(X)\; ,
  $$
whose kernel 
  $$
  \Gys _0(k)=\ker (\iota _*)
  $$
is the main object of study in this paper. 


The first observation is that if 
  $$
  \alb :A_0(X)\to \Alb _{X/k}(k)
  $$
is the Albanese homomorphism from the Chow group $A_0(X)$ to the group of $k$-rational points of the Albanese variety of the surface $X$, the Gysin kernel $\Gys _0(k)$ is contained in the bigger kernel
  $$
  \Gys _1(k)=\ker (\alb \circ \iota _*)\; .
  $$

Moreover, there exists a regular morphism from $A$ to $\Alb _{X/k}$ over $k$, such that the composition $\alb \circ \iota _*$ is the application of it to points rational over $k$. Consequently, the kernel $\Gys _1(k)$ is the set of $k$-rational points of some Zariski closed group subscheme in $A$. Let $A_1$ be the connected component of this subscheme containing $0$. Then $A_1$ is an abelian subvariety in $A$, and $A_1(k)$ is a subset in $\Gys _1(k)$. 

Assume now that $k$ is embedded in to an uncountable universal domain $\ud $, in the sense of Weil, see \cite{Weil}. If $\cha (k)=0$ then $\ud $ can be $\CC $, and if $\cha (k)=p>0$ then $\ud $ can be the algebraic closure $\overline {\FF _p((t))}$ of the field of Laurent series with coefficients in $\FF _p=\ZZ /p$. The latter may be also considered as the field of generalised Puiseux series over the finite field $\FF _p$, see \cite{Kedlaya}. 

Consider the closed immersion $\iota _{\ud }$ of $C_{\ud }$ in to $X_{\ud }$. Then  there exists an abelian subvariety $A_0$ inside $(A_1)_{\ud }$ over $\ud $, such that the kernel $\Gys _0(\ud )$ is the union of a countable collection of translates of the group $A_0(\ud )$ inside $A(\ud )$. 

Assume $k$ is algebraically closed, and choose and fix a Lefschetz pencil on $X$ over $k$, see Prop. 1.5 on page 177 in \cite{FreitagKiehl} or Expos\'e XVII in \cite{SGA7-2}. Resolving the indeterminacy locus, we may also assume, without loss of generality, that our pencil is a regular morphism 
  $$
  X\to \PR ^1
  $$ 
having a section over $k$, see Theorem 31.3 on page 185 in \cite{MilneLEC} or Expos\'e XVII in \cite{SGA7-2}.

Extending scalars from $k$ to $\ud $, we also obtain the pencil over $\ud $. Let $\eta $ be the generic point of the projective line, and let $\bar \eta $ be the geometric generic point on it. Let $C$ be the generic fibre of the pencil, and let $C_{\bar \eta }$ be the geometric generic fibre of the pencil. The closed immersion of $C_{\bar \eta }$ in to $X_{\bar \eta }$ gives rise to the Gysin kernel $\Gys _0(\bar \eta )$, and the abelian varieties $A_{\bar \eta ,1}$ and $A_{\bar \eta ,0}$.

\medskip

Our first result is this:

\medskip

\begin{itemize}
\item[]{}
{\rm THEOREM A.} {\it
In terms above, either 
  $$
  A_{\bar \eta ,0}=0
  $$ 
or 
  $$
  A_{\bar \eta ,0}=A_{\bar \eta ,1}\; .
  $$
}
\end{itemize}

\medskip

Notice also that, since $\ud $ is isomorphic, not canonically, to $\overline {\ud (t)}$, the same dichotomy is true for any transcendental number as a point on $\PR ^1_{\ud }$.

Theorem A is similar to what we proved in \cite{Kalyan&Vovan} in characteristic $0$. On the other hand, the present theorem is proven over the ground field of arbitrary characteristic, and our arguments are now simpler than in \cite{Kalyan&Vovan}. It should be also pointed out that the same result is proven in \cite{SchoemannWerner}. The reader is cordially invited to compare the methods.


Over $\CC $, Theorem A plays an important role in the study of rational equivalence of $0$-cycles, when $C$ in moving in an ample linear system on $X$, see pages 304 - 305 in the second volume of the book \cite{VoisinBook}, and Proposition 2.4 on page 854 in \cite{VoisinSymplInv}. 

Recall that the Chow group $A_0(X_{\ud })$ is said to be (weakly) representable, if it can be covered by an abelian variety, in some regular sense, see Definition 1.1 in \cite{BlochAnExample} or Definition 3.3 in \cite{BlochMurre}. If the second cohomology group $H^2(X,\QQ _l)$ is algebraic, i.e. it can be generated by cycles classes of divisors on $X$, then weak representability of $A_0(X)$ is equivalent to Bloch's conjecture, for the surface $X$, see \cite{BCinitial} and \cite{BlochLectures}. 

The next theorem tells us that weak representability of $A_0(X_{\ud })$ is equivalent to the property of $\Gys _0$ being countable at the geometric generic point $\bar \eta $, provided $H^1(X)=H^3(X)=0$. 


\medskip

\begin{itemize}
\item[]{}
{\rm THEOREM B.} {\it
In terms above, if $A_0(X_{\ud })$ is not weakly representable, then the kernel $\Gys _0(\overline {\ud (t)})$ is countable. Assuming the Gysin homomorphism
  $$
  H^1(C_{\bar \eta },\QQ _l)\to H^3(X_{\bar \eta },\QQ _l(1))
  $$
is not injective, the converse is also true: if $\Gys _0(\overline {\ud (t)})$ is countable, then $A_0(X_{\ud })$ is not weakly representable. 
}
\end{itemize}

\medskip

Finally, our third result is the following 

\medskip

\begin{itemize}
\item[]{}
{\rm THEOREM C.} {\it
Let $k_0$ be the minimal field of definition of the Lefschetz pencil, and let $k_0(t)$ is the purely transcendental field extension. If the kernel $\Gys _0(\overline {\ud (t)})$ is countable, then 
  $$
  \Gys _0(\overline {\ud (t)})=\Gys _0(\overline {k_0(t)})\; ,
  $$
i.e. all points in the Gysin kernel are algebraic over $k_0(t)$.
}
\end{itemize}

\medskip

The meaning of Theorem C is that in order to prove Bloch's conjecture, provided $H^2(X,\QQ _l)$ is algebraic, it suffices to find a point of transcendence degree $>0$ in the Gysin kernel $\Gys _0(\overline {\ud (t)})$. 

Another application would be to $K3$-surfaces over $\QQ $ or, say, $\FF _p$. Since $A_0(X_{\ud })$ is known to be not representable, see \cite{Mumford} and \cite{BlochLectures}, all points in the Gysin kernel are rational over $\overline {\QQ (t)}$ or, respectively, over $\overline {\FF_p (t)}$.



\bigskip

\section{The Mumford-Ro\u \i tman lemma}
\label{MRT}
 
 In \cite{Mumford}, Mumford proved a certain lemma saying that rational equivalence of $0$-cycles on a variety $X$ is encoded by a countable collection of closed subschemes in the symmetric powers of $X$, when the ground field is $\CC $. Later on, Ro\u \i tman gave a detailed proof of Mumford's lemma in \cite{RoitmanGamma}, also working over $\CC $. Our first aim in the present paper is to convince the reader that the Mumford-Ro\u \i tman theorem holds true in positive characteristic too.

\subsection{The Suslin-Voevodsky representability theorem}

First we need to recall representability of $0$-cycles by symmetric powers proven in rigour, in both zero and positive characteristics, by Suslin and Voevodsky in \cite{SuslinVoevodsky}. 

Let $k$ be a field, and let $p$ be the exponential characteristic of $k$. Let $X$ be a scheme\footnote{all schemes in this paper are separated by default} of finite type over $k$. Let $\ZZ [1/p]$ be the minimal subring containing integers and $1/p$ in $\QQ $, and for any normal integral scheme $S$ over $k$, let 
  $$
  z_0(X)(S)
  $$ 
be the free $\ZZ [1/p]$-module generated by closed integral subschemes 
  $$
  Z\subset X\times S
  $$ 
whose projection to $S$ is finite and surjective. In other words, $z_0(X)(S)$ is the module of relative $0$-cycles on $X\times S$ over $S$ with coefficients in $\ZZ [1/p]$. 

Notice that the inversion of $p$ is completely necessary, as it allows us to correctly define pullbacks in case of characteristic $p>1$. For any morphism 
  $$
  f:S'\to S
  $$ 
 there exists a well defined pullback homomorphism 
   $$
   f^*:z_0(X)(S)\to z_0(X)(S')\; ,
   $$
and thus we obtain the pre-sheaf $z_0(X)$ of $0$-cycles on the category $\Nor $ of integral normal schemes over $k$, see page 79 in \cite{SuslinVoevodsky}. 

Let also 
  $$
  z_0^{\eff }(X)(S)
  $$ 
be the submonoid of effective $0$-cycles, i.e. relative $0$-cycles with positive coefficients in $z_0(X)(S)$. Then $z_0^{\eff }(X)$ is a sub-presheaf of monoids in $z_0(X)$, and the latter is the group completion of the former in the category of presheaves on $\Nor $, see the bottom of page 81 in \cite{SuslinVoevodsky}.

Now recall that $X$ is said to be AF over $k$, if any finite collection of points of $X$ is contained in an affine neighbourhood in $X$. For example, all quasi-projective varieties over $k$ are AF, see Prop. (A.1.3) in Paper I in \cite{RydhThesis}.

So, assume that $X$ is AF over $k$. Then the $d$-th symmetric group $\Sigma _d$ acts admissibly on the $d$-fold fibred product $X^d$, symmetric powers of $X$ exist, see \cite{RydhThesis}, and we obtain the free monoid 
  $$
  \coprod _{i=0}^{\infty }\Sym ^d(X)
  $$
of the variety $X$ as an object in the category of presheaves on $\Nor $.


If $\bcF $ is a presheaf of monoids on some category $\catC $, and if $M$ is a set-theoretical monoid, considered as a constant presheaf on $\catC $, let 
  $$
  \bcF \otimes _{\NN }M
  $$ 
be the section-wise tensor product of $\bcF $ and $M$ over the semiring of natural numbers $\NN $. If now $\NN [1/p]$ is the smallest submonoid containing $\NN $ and $1/p$ in $\QQ $, we have the section-wise localization 
  $$
  \bcF [1/p]=\bcF \otimes _{\NN }\NN [1/p]
  $$ 
of $\bcF $ at $p$ which inverts $p$ (see Appendix A in \cite{Anderson} for details on localization of monoids).

In \cite{SuslinVoevodsky} Suslin and Voevodsky constructed a homomorphism of presheaves of monoids
  $$
  \sv _X:z^{\eff }(X)\lra 
  \left (\coprod _{i=0}^{\infty }\Sym ^d(X)\right)
  \left[\frac{1}{p}\right]
  $$
on the category of integral normal schemes over $k$, and then proved the following important theorem.

\begin{theorem}
\label{SV-theorem}
For any field $k$, and for any scheme $X$ of finite type over $k$, the homomorphism $\sv _X$ is an isomorphism of commutative monoids in the category of presheaves on integral normal schemes over $k$.
\end{theorem}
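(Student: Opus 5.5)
This statement is Theorem 6.8 of Suslin--Voevodsky, recalled here as a citation, so the plan below follows their argument rather than giving an independent proof. The aim is to show that for every integral normal $S$ over $k$, the map $\sv _X(S)$ is bijective. Both sides are compatible with Zariski localization on $S$ and with open immersions in $X$, so I first reduce to $X$ quasi-projective, indeed affine. In the general finite-type case, a relative cycle finite over $S$ has support lying in AF opens locally on $S$, and symmetric powers of AF opens glue correctly.

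The construction of $\sv _X$ comes first. Take an integral $Z\subset X\times S$, finite and surjective over $S$ of degree $d$. Over the normal base $S$, choose the normalization of $S$ in a normal closure of $k(Z)/k(S)$. The $d$ geometric points of the generic fibre, counted with inseparable multiplicity, give a map $S\to \Sym ^d(X)$ after clearing the inseparable degree. This degree is a power of $p$, which is exactly why one passes to $\ZZ [1/p]$. Normality of $S$ is used so that a rational map into the finite quotient $X^d\to \Sym ^d(X)$ extends. I would then check additivity and functoriality in $S$, which makes $\sv _X$ a morphism of presheaves of monoids.

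For injectivity, a relative cycle over normal $S$ is determined by its restriction to the generic point $\Spec k(S)$, because its components are closures of their generic fibres. So it suffices to treat $S=\Spec F$ with $F$ a field. There a $0$-cycle is recovered from its image in $\Sym ^d(X)(F)$ after passing to $\bar F$: points of $\Sym ^d(X)(\bar F)$ are unordered $d$-tuples.

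Surjectivity is the main obstacle. A morphism $f:S\to \Sym ^d(X)$ must come from a relative cycle. The natural candidate is the following: pull back the universal correspondence $X^d\to \Sym ^d(X)$, take the image in $X\times S$ of the reduced fibre product $(X^d\times _{\Sym ^d X}S)$, and assign multiplicities. Over a nonperfect field, however, an $F$-point of $\Sym ^d(X)$ may correspond only to a purely inseparable divisor. So one only gets $p^n$ times the point from an honest cycle, which is precisely why $[1/p]$ is inverted on both sides.

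My plan for surjectivity has three steps:
\begin{itemize}
\item[(i)] Treat the field case using the Galois-descent description of $\Sym ^d(X)(F)$.
\item[(ii)] Extend the generic cycle by closure to $X\times S$.
\item[(iii)] Verify, using normality of $S$ and finiteness of $\Sym ^d$ over the base, that the closure is finite and surjective over $S$ and maps back to $f$.
\end{itemize}
Compatibility of pullbacks with these identifications follows from the field case together with the uniqueness established in the injectivity step.
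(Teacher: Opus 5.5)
The paper gives no argument of its own here (it only cites Theorem 6.8 of Suslin--Voevodsky), so your sketch has to stand by itself. For AF $X$ the sectionwise bijectivity part essentially does:
over an integral normal $S$ both sides are determined at the generic point;
an $F$-point of $\Sym ^d(X)$, with $F=k(S)$, is an $\Aut (\overline F/F)$-stable multiset of $\overline F$-points, hence the image of a unique $\NN [1/p]$-combination of closed points of $X_F$;
and the closures of these points lie in the image in $X\times S$ of the $S$-finite scheme $S\times _{\Sym ^d(X)}X^d$, so they are finite and surjective over $S$.
(What is used there is finiteness of $X^d\to \Sym ^d(X)$, not of $\Sym ^d(X)$ over the base.)

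The real gap is where you place $p$, and hence the presheaf structure. An integral $Z$ with $[k(Z):k(S)]=d_sp^m$ already defines an honest $k(S)$-point of $\Sym ^d(X)$: its $d_s$ conjugate geometric points, each with multiplicity $p^m$, give a rational point, which then extends over $S$ by normality. Nothing needs to be cleared. Denominators appear in the opposite direction and under pullback along non-dominant maps. For instance, over $\FF _p$ the morphism $\psi :\AF ^2\to \Sym ^p(\AF ^2)$, $(a,b)\mapsto p\cdot (a^{1/p},b^{1/p})$, pulls the universal incidence cycle back to $\frac{1}{p}[W]$. Here $W\subset \AF ^2\times \AF ^2$ is given by $x_1^p=a$, $x_2^p=b$ and has degree $p^2$ over the base.

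So ``the field case together with uniqueness'' gives $\sv _X(\varphi ^*Z)=\sv _X(Z)\circ \varphi $ only for dominant $\varphi :S'\to S$. When the generic point of $S'$ maps to a point $s$ over which $Z\to S$ is not flat, you must show that the multiplicities in the Suslin--Voevodsky pullback $\varphi ^*Z$ match the multiset given by the value at $s$ of $S\to \Sym ^d(X)$. This specialisation comparison is exactly what the inversion of $p$ is for, and your plan does not address it. Since the paper takes $\sv _X$ as already given as a map of presheaves, this affects your reconstruction of $\sv _X$ rather than bijectivity, but as written that step is unjustified.

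Separately, your reduction from an arbitrary $X$ of finite type is false. If $X$ is not AF, there are closed points $x_1,\dots ,x_r$ lying in no common affine open, hence in no common AF open. The relative cycle $[x_1]+\dots +[x_r]$ over $S=\Spec (k)$ is then not supported in any AF open, even locally on $S$. The scheme glued from the $\Sym ^r(U)$ with $U$ AF does not contain the corresponding point, and $\Sym ^r(X)$ need not be a scheme at all. The theorem should be read for AF $X$, which is the standing assumption the paper makes just before stating it and the only case it uses. In that case your reduction to affine $X$ is fine, via the cover of $\Sym ^d(X)$ by the opens $\Sym ^d(U)$ with $U$ affine.
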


\begin{pf}
See Theorem 6.8 on page 82 in \cite{SuslinVoevodsky}.
\end{pf}

Now, if $k$ is a field, and $X$ an algebraic scheme over, let
  $$
  Z_0(X)
  $$
be the group of $0$-cycles on $X$, i.e. the free abelian group generated by closed points on $X$. 

\begin{corollary}
\label{rateqP1path}
Let $k$ be an algebraically closed field, and let $X$ be an algebraic scheme over $k$. Let also
  $$
  A,B\in Z_0(X)
  $$
be two $0$-cycles on $X(k)$. Then $A$ is rationally equivalent to $B$ on $X$ if and only if there exist an integer $u\geq 0$, a regular morphism
 $$
 f:\PR ^1\to \Sym ^{d+u}(X)
 $$
over $k$, and a positive $0$-cycle $C$ of degree $u$ on $X$, such that $A+C$ and $B+C$ are positive $0$-cycles of degree $d+u$ on $X$, 
  $$
  \sv _X(A+C)=f(0)
  $$
and
  $$
  \sv _X(B+C)=f(\infty )
  $$
\end{corollary}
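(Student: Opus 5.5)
The plan is to prove both implications separately. Rational equivalence is defined by divisors of rational functions on curves. The link to maps $\PR^1\to\Sym^{d+u}(X)$ is made through Theorem \ref{SV-theorem}, applied with $S=\PR^1$ (normal, integral) and $S=\Spec(k)$. Since $k$ is algebraically closed, closed points of $\Sym^n(X)$ correspond to effective $0$-cycles of degree $n$, via $\sv_X$ at $S=\Spec k$. For such $k$ the localization at $p$ is harmless on integral cycles: an element of $\Sym^n(X)(k)$ comes from an honest effective cycle with integer coefficients.

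For the \emph{if} direction, let $f:\PR^1\to\Sym^{d+u}(X)$ be given. By Theorem \ref{SV-theorem}, $f$ is $\sv_X(W)$ for a relative effective cycle $W\in z_0^{\eff}(X)(\PR^1)$ of degree $d+u$, possibly with coefficients in $\ZZ[1/p]$. Compatibility of $\sv_X$ with pullback along the points $0,\infty:\Spec k\to\PR^1$ gives $W_0=A+C$ and $W_\infty=B+C$, using injectivity of $\sv_X$ at $\Spec k$. Next, write $W=\sum n_iZ_i$ with each $Z_i\subset X\times\PR^1$ integral and finite surjective over $\PR^1$, hence a curve. The specialization $W_0-W_\infty$ equals $\sum n_i\, (\pr_X)_*\big(\pr_{\PR^1}|_{Z_i}^*([0]-[\infty])\big)$. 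This is a combination of divisors of rational functions on the curves $Z_i$, pushed forward to $X$, so it is rationally equivalent to zero, with coefficients a priori in $\ZZ[1/p]$. Hence $A-B$ is zero in $CH_0(X)\otimes\ZZ[1/p]$.

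To get rational equivalence with integer coefficients, I would argue that the fibres $W_0,W_\infty$ are integral cycles while the Suslin--Voevodsky pullback agrees with proper intersection multiplicities, up to the $p$-power denominators arising from inseparability. I expect this to be the main obstacle. What I would try first: choose the lift $W$ with integer coefficients by taking the cycle associated to the finite flat family pulled back from the universal family over $\Sym^{d+u}(X)$. Over the smooth curve $\PR^1$, any finite surjective family is flat, so specialization is the usual flat pullback, and integrality holds. Alternatively, I could note that $p$-torsion subtleties may simply be tolerated if the paper only uses the result up to inverting $p$.

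For the \emph{only if} direction, suppose $A-B=\sum_j (\nu_j)_*\pdiv(g_j)$ with $\nu_j:\tilde Y_j\to X$ the normalizations of curves $Y_j$ and $g_j$ rational functions on them. Each $g_j$ gives a finite map $\tilde Y_j\to\PR^1$. Its graph, pushed to $X\times\PR^1$, is a relative effective cycle $W_j$ with fibres $W_{j,0}=\nu_{j*}(g_j^{-1}(0))$ and $W_{j,\infty}=\nu_{j*}(g_j^{-1}(\infty))$. Put $W=\sum_j W_j+\Gamma$, where $\Gamma$ is a constant family $D\times\PR^1$ with $D$ effective, chosen to balance degrees. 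Then $W_0-W_\infty=A-B$ and both fibres are effective. Set $C=W_0-A=W_\infty-B$, enlarging $D$ if needed to make $C$ effective. Finally, $f=\sv_X(W)$ is the required morphism, $u=\deg C$, and Theorem \ref{SV-theorem} together with pullback compatibility gives $\sv_X(A+C)=f(0)$ and $\sv_X(B+C)=f(\infty)$.
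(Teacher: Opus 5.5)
Your overall strategy is the one the paper intends: its proof is only an appeal to Theorem \ref{SV-theorem} and to Anderson's Proposition 6.3.28. However, the ``if'' direction as proposed has a genuine gap, exactly where you flag it. The lift $W=\sum n_iZ_i$ of $f$ a priori has $n_i\in\ZZ[1/p]$, so your divisor computation only yields $p^N(A-B)\sim 0$. Integrality of the fibres $W_0=A+C$ and $W_\infty=B+C$ is given and is not what is at stake; integrality of the $n_i$ is.

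Your first fix does not provide it. Pulling back the incidence correspondence $\Xi\subset X\times\Sym^n(X)$ along $f$ gives the wrong cycle, because $\Xi$ is not flat over $\Sym^n(X)$. For a surface in characteristic $\neq 2$, the fibres of $\Xi\to\Sym^2(X)$ over the diagonal are $\Spec k[\alpha,\beta]/(\alpha^2,\alpha\beta,\beta^2)$, of length $3$. So for $f(t)=2[g(t)]$ you would get three times the graph of $g$, not twice. Your second fix proves a weaker statement, which does not suffice: the paper uses the corollary integrally, for $\cl_d^{-1}(0)$ and the Gysin kernel.

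The missing idea is that over a one-dimensional base no denominators occur. $W$ is the closure of its generic fibre $W_\eta$, a $\ZZ[1/p]$-cycle on $X_K$ with $K=k(t)$, representing the $K$-point $f(\eta)$. Since $k$ is perfect and $\mathrm{trdeg}_kK=1$, the field $K^{\mathrm{sep}}$ has $p$-degree one. Hence its finite purely inseparable extensions are exactly the fields $(K^{\mathrm{sep}})^{1/p^e}$.

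Suppose a geometric point $P$ occurs in $f(\eta)$ with multiplicity $m=p^rm'$, $p\nmid m'$. Over $K^{\mathrm{sep}}$, split off the part of $f(\eta)$ supported at $P$; this works because addition $\prod_i\Sym^{m_i}(X)\to\Sym^n(X)$ is \'etale at tuples of cycles with pairwise disjoint supports. Then evaluate the invariant $\prod_{i=1}^m x_j^{(i)}$ of an affine coordinate $x_j$. This gives $x_j(P)^m\in K^{\mathrm{sep}}$, hence $x_j(P)^{p^r}\in K^{\mathrm{sep}}$, since that element is both separable and purely inseparable over $K^{\mathrm{sep}}$. So the inseparable degree of the closed point of $X_K$ below $P$ divides $p^r$, hence divides $m$. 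Thus $W_\eta$, and therefore $W$, has integer coefficients, and your computation gives $A\sim B$ on the nose.

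For smooth projective $X$ there is an alternative: use Ro\u\i tman's theorem with Milne's $p$-part. Here $A-B$ is $p$-primary torsion in $A_0(X)$, and $\alb(A-B)=0$ because $\PR^1\to\Sym^{d+u}(X)\to\Alb_{X/k}$ is constant.

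In the ``only if'' direction, two things must be added. First, finiteness of $g_j:\tilde Y_j\to\PR^1$ requires the curves $Y_j$ to be complete, i.e.\ $X$ proper. This hypothesis cannot be dropped from the statement: on $X=\AF^1$ one has $[0]\sim[1]$, while every morphism $\PR^1\to\Sym^n(\AF^1)\cong\AF^n$ is constant. It does hold in the paper's application, where $X$ is smooth projective.

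Second, $\sv_X(W)$ a priori lies in the localized monoid, so you should say why there is an honest $f$. For instance, $f(t)=D+\sum_j\nu_{j*}(g_j^*[t])$ is a morphism. Indeed, $t\mapsto g_j^*[t]$ is the morphism $\PR^1\to\Sym^{\deg g_j}(\tilde Y_j)$ classifying the fibres of the finite flat map $g_j$, since the symmetric power of the smooth curve $\tilde Y_j$ is its Hilbert scheme of points.
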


\begin{pf}
This is a straightforward consequence of Theorem \ref{SV-theorem}. A much more general result is proven in \cite{Anderson}, Proposition 6.3.28, page 194
\end{pf}

Let
  $$
  \sigma :k\stackrel{\sim }{\lra }k'
  $$
be an isomorphism of fields, let $X$ be a scheme of finite type over $k$, and let $X'$ be a scheme given by the cartesian square 
  $$
  \diagram
  X' \ar[dd]^-{\varkappa } \ar[rr]^-{} & & \Spec (k') 
  \ar[dd]^-{\Spec (\sigma )} \\ \\
  X \ar[rr]^-{} & & \Spec (k)
  \enddiagram
  $$
in the category of schemes over $k$. The isomorphism of schemes $\tilde \sigma $ induces an isomorphism
  $$
  Z_0(\varkappa ):Z_0(X')\stackrel{\sim }{\lra }Z_0(X)
  $$
between the group of $0$-cycles on the scheme $X'$ over $k'$ and the group of $0$-cycles on the scheme $X$ over $k$.

\begin{corollary}
\label{fieldtwist}
The isomorphism $Z_0(\varkappa )$ and its inverse preserve the algebraic and rational equivalence relations of $0$-cycles on $X'$ over $k'$ and $X$ over $k$.
\end{corollary}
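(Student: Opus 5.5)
The plan is to transport everything along the isomorphism of schemes $\varkappa :X'\to X$, which is an isomorphism of abstract schemes (though not of $k$-schemes), compatible with $\Spec (\sigma )$ on the bases. Since $\varkappa $ is a homeomorphism of underlying spaces that preserves residue fields up to the isomorphism induced by $\sigma $, $Z_0(\varkappa )$ sends closed points to closed points and is a bijection on generators. It therefore suffices to show that the generators of the subgroup of cycles rationally (respectively algebraically) equivalent to zero on $X'$ are mapped onto the corresponding generators on $X$. The statement for the inverse then follows by symmetry, using $\sigma ^{-1}$.

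For rational equivalence I would use the intrinsic definition: a cycle is rationally equivalent to zero if it is a sum of push-forwards $\nu _*\pdiv (g)$, where $\nu :\tilde W\to X$ is the normalisation of an integral curve $W\subset X$ and $g\in k(W)^*$. All the ingredients here are scheme-theoretic: integral closed subschemes of dimension one, normalisation, divisors of rational functions (via orders of vanishing at the local rings of closed points), and degrees of residue field extensions in push-forward. Base change along $\Spec (\sigma )$ identifies the dimension-one integral closed subschemes of $X'$ with those of $X$, and likewise their normalisations and function fields, with $k'(W')\cong k(W)$ via $\sigma $. Orders of vanishing are computed in isomorphic local rings, and residue degrees are preserved because $\sigma $ carries $k$ to $k'$. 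Hence $Z_0(\varkappa )$ maps principal cycles onto principal cycles. Alternatively, when $k$ is algebraically closed, Corollary \ref{rateqP1path} gives the same conclusion directly: a morphism $f:\PR ^1_k\to \Sym ^{d+u}(X)$ transforms into $f':\PR ^1_{k'}\to \Sym ^{d+u}(X')$, since formation of symmetric powers commutes with the base change $\Spec (\sigma )$, and $\sv $ is natural.

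For algebraic equivalence I would use the definition via smooth connected curves: $A\sim _{\rm alg}B$ when there are a smooth connected curve $T$ over the ground field, points $t_0,t_1$, and a cycle $\Gamma $ on $T\times X$, flat over $T$, with $\Gamma _{t_0}-\Gamma _{t_1}=A-B$, up to finite sums. Twisting $T$, $\Gamma $ and the points by $\sigma $ gives the same data over $k'$, because $(T\times _kX)'=T'\times _{k'}X'$, and smoothness, connectedness, flatness and the formation of fibres all commute with this isomorphic base change.

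The main obstacle is bookkeeping rather than depth. One has to check that every construction (fibre products over the base, symmetric powers, normalisation, specialisation to fibres) is compatible with the non-$k$-linear isomorphism $\varkappa $. I would handle this uniformly by observing that each such construction is functorial for isomorphisms of pairs (scheme, base) and that $(\varkappa ,\Spec (\sigma ))$ is such an isomorphism.
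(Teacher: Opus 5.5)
Your proof is correct, but it takes a different route from the paper's.

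The paper never unpacks the definitions of the two equivalence relations. It observes that $\sigma$ induces an equivalence of categories $\Nor /k\simeq \Nor /k'$, hence of the corresponding presheaf categories. It applies this to the Suslin--Voevodsky isomorphism $\sv _X$ of Theorem \ref{SV-theorem}, obtaining a commutative square that identifies $z^{\eff }(X'/k')$ and $(\coprod _d\Sym ^d(X'/k'))[1/p]$ with their counterparts over $k$. Evaluating on sections over a curve $C$ and its twist $C'=C\times _kk'$ gives algebraic equivalence, and taking $C=\PR ^1$ gives rational equivalence.

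Your main argument instead works with the intrinsic definitions: $\nu _*\pdiv (g)$ on normalised curves for rational equivalence, and families over smooth connected curves for algebraic equivalence. You then check that each ingredient is carried along by the abstract isomorphism of pairs $(\varkappa ,\Spec (\sigma ))$. Your alternative for rational equivalence, via Corollary \ref{rateqP1path} and naturality of $\sv $, is essentially the paper's argument restricted to $\PR ^1$.

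What your route buys:
\begin{itemize}
\item It is elementary and self-contained, needing neither representability nor any algebraic closedness hypothesis.
\item It never has to pass through the $[1/p]$-localised monoids.
\item It makes visible that rational equivalence depends only on the underlying scheme, not on its structure morphism to $\Spec (k)$. In particular, the residue degrees $[\kappa (\tilde P):\kappa (P)]$ in the push-forward are intrinsic, so $\sigma $ is not even needed for that step.
\end{itemize}

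What the paper's route buys is uniformity with the symmetric-power framework of Section \ref{MRT}. There the twist is seen at the level of the presheaves and the parameter schemes $\Sym ^d$, which is the form in which it is later used for the isomorphisms $\varkappa _P$ in Lemma \ref{ratkappa}.
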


\begin{pf}
The isomorphism $\sigma $ induces an equivalence
  $$
  \Nor /k\stackrel{\sim }{\lra }\Nor /k'  
  $$
of the categories of integral normal schemes over $k$ and $k'$ respectively. This equivalence of categories induces naturally an equivalence 
  $$
  \PShv (\Nor /k)\stackrel{\sim }{\lra }\PShv (\Nor /k')
  $$
of the categories of set-valued presheaves on $\Nor /k$ and $\Nor /k'$, and the same for presheaves of monoids. 

The application of this equivalence to the Suslin-Voevodsky isomorphism of monoids gives us the commutative square  
  $$
  \diagram
  z^{\eff }(X'/k') \ar[dd]^-{\sim } \ar[rr]^-{\sv _{X'/k'}} & & 
  \left (\coprod _{i=0}^{\infty }\Sym ^d(X'/k')\right)[1/p]
  \ar[dd]^-{\sim } \\ \\
  z^{\eff }(X/k)  \ar[rr]^-{\sv _{X/k}} & & 
  \left (\coprod _{i=0}^{\infty }\Sym ^d(X/k)\right)[1/p]
  \enddiagram
 $$
in which the vertical arrows are isomorphisms of presheaves. The commutativity for the sections of the corresponding pre-sheaves on an algebraic curve $C$ over $k$ and its pull-back $C'=C\times_kk'$ over $k'$ gives us the first assertion of the lemma. If $C=\PR ^1$, we get the second one.
\end{pf}

\subsection{The group completion of the monoid of $0$-cycles}
\label{grpcompletion0ycles}

Let $k$ be an algebraically closed field, and let $X$ be an AF-scheme over $k$. We will be working with the free monoid
  $$
  \coprod _{d=0}^{\infty }\Sym ^d(X(k))
  $$
of $k$-rational points on $X$ and its group completion
  $$
  \left(\coprod _{d=0}^{\infty }\Sym ^d(X(k))\right)^+
  $$
as the quotient of its square by the diagonal, see Proposition 1.1 on page 70 in \cite{Weibel}. 

Then we have the standard homomorphisms
  $$
  \pi :\coprod _{d=0}^{\infty }\Sym ^d(X(k))\times \coprod _{d=0}^{\infty }\Sym ^d(X(k))\to \left(\coprod _{d=0}^{\infty }\Sym ^d(X(k))\right)^+\; ,
  $$
and
  $$
  \iota :\coprod _{d=0}^{\infty }\Sym ^d(X(k))\to 
  \left(\coprod _{d=0}^{\infty }\Sym ^d(X(k))\right)^+\; ,
  $$
see \cite{Weibel}. 

Since products of sets commute with coproducts, 
  $$
  \coprod _{d=0}^{\infty }\Sym ^d(X(k))\times \coprod _{d=0}^{\infty }\Sym ^d(X(k))=
  \coprod _{(d_1,d_2)\in \NN \times \NN }\left(\Sym ^{d_1}(X(k))\times 
  \Sym ^{d_2}(X(k))\right)
  $$
This gives us the map
  $$
  \pi _{d_1,d_2}:\Sym ^{d_1}(X(k))\times \Sym ^{d_2}(X(k))\to 
  \left(\coprod _{d=0}^{\infty }\Sym ^d(X(k))\right)^+\; ,
  $$
for each ordered pair of natural numbers $(d_1,d_2)$.

As above, lety
  $$
  Z_0(X(k))
  $$
be the group of $0$-cycles on $X(k)$, and consider the obvious homomorphism of monoids
  $$
  \alpha :\coprod _{d=0}^{\infty }\Sym ^d(X(k))\to Z_0(X(k))\; ,
  $$
sending a closed point on the symmetric power  
  $$
  A\in \Sym ^d(X(k))
  $$ 
to the corresponding effective $0$-cycle 
  $$
  A\in Z_0(X(k))\; .
  $$

The homomorphism $\alpha $ induces the homomorphism 
  $$
  \tau :\coprod _{d=0}^{\infty }\Sym ^d(X(k))\times 
  \coprod _{d=0}^{\infty }\Sym ^d(X(k))
  \to Z_0(X(k))\; ,
  $$
and, by distributivity, the map
  $$
  \tau _{d_1,d_2}:\Sym ^{d_1}(X(k))\times \Sym ^{d_2}(X(k))\to Z_0(X(k))\; ,
  $$
for each $d_1$ and $d_2$, sending
  $$
  (A,B)\mapsto A-B\; ,
  $$
where 
  $$
  A\in \Sym ^{d_1}(X(k))\qqand B\in \Sym ^{d_2}(X(k))
  $$
are two closed points on symmetric powers, and $A-B$ is the difference between the corresponding effective $0$-cycles in $Z_0(X(k))$. 

The homomorphism $\tau $ factorizes through the diagonal, inducing the unique homomorphism
  $$
  \theta :\left(\coprod _{d=0}^{\infty }\Sym ^d(X(k))\right)^+\lra \, Z_0(X(k))\; ,
  $$
sending
  $$
  [A,B]\mapsto A-B\; ,
  $$
and making the diagram
  $$
  \xymatrix{
  \coprod _{d=0}^{\infty }\Sym ^d(X(k)) \ar[rr]^-{\iota } \ar[ddrr]_-{\alpha } & &
  \left(\coprod _{d=0}^{\infty }\Sym ^d(X(k))\right)^+ \ar@{.>}[dd]^-{\hspace{+1mm}\exists !\theta } \\ \\
  & & Z_0(X(k))
  }
  $$ 
commute.

If 
  $$
  d_1=d_2\; ,
  $$
the, of course, 
  $$
  \deg (A-B)=0\; ,
  $$
and we obtain the map
  $$
  \tau _{d,d}:\Sym ^d(X(k))\times \Sym ^d(X(k))\to Z_0(X(k))_{\deg =0}
  $$
where 
  $$
  Z_0(X(k))_{\deg =0}
  $$ 
is the subgroup of degree $0$ $0$-cycles in $Z_0(X)$.

Let
  $$
  \cl _d:\Sym ^d(X(k))\times \Sym ^d(X(k))\to A_0(X)
  $$
be the composition of the map $\tau _{d,d}$ with the inclusion of $Z_0(X(k))$ in to $Z_0(X)$ and the quotient homomorphism from the group $Z_0(X)_{\deg =0}$ onto the Chow group $A_0(X)$ of degree $0$ $0$-cycles modulo rational equivalence on $X$. The map $\cl _d$ sends 
  $$
  (A,B)\mapsto [A-B]\; ,
  $$
where $[A-B]$ is the cycle class of $A-B$ in $A_0(X)$. 

\subsection{The proof of the Mumford-Ro\u \i tman's lemma}
\label{MRTproof}

Let $k$ be an algebraically closed field, and let $X$ be a smooth projective scheme over $k$. Our aim now is to give a rigorous proof of an arbitrary characteristic version of Mumford's Lemma 3 on page 201 in \cite{Mumford}. We will follow Roitman's approach given in \cite{RoitmanGamma}.

For short of notation, for any finite set of natural numbers
  $$
  \{ d_1,\ldots ,d_s\} 
  $$
let
  $$
  \Sym ^{d_1,\ldots ,d_s}(X)=
  \Sym ^{d_1}(X)\times \ldots \times \Sym ^{d_s}(X)
  $$
be the fibred product over the ground field $k$. In other words, 
$\{ d_1,\ldots ,d_s\} $ is a multi-degree of the corresponding multi-symmetric power of $X$. Then, of course, 
  $$
  \Sym ^{d_1,\ldots ,d_s}(X(k))=
  \Sym ^{d_1}(X(k))\times \ldots \times \Sym ^{d_s}(X(k))\; .
  $$

Since now let us assume that $X$ is a projective scheme over $k$. It follows that any symmetric power $\Sym ^d(X)$, and the product of symmetric powers are projective too. 

Let
  $$
  d\in \NN \; ,
  $$
and fix a closed immersion of the symmetric power $\Sym ^d(X)$ in to a projective space $\PR $ over $k$. Then the scheme 
  $$
  \MorS (\PR ^1,\Sym ^d(X))
  $$
admits the stratification 
  $$
  \MorS ^v(\PR ^1,\Sym ^d(X))=
  \coprod _{v\in \NN }\MorS ^v(\PR ^1,\Sym ^d(X))
  $$
by schemes parametrising morphisms of degree $v$ from $\PR ^1$ to $\Sym ^d(X)$, see \cite{Kollar}.

Choose two points $0$ and $\infty $ on $\PR ^1$, and let
  $$
  \ev (0,\infty ):\MorS ^v(\PR ^1,\Sym ^d(X))\to \Sym ^{d,d}(X)\ ,
  $$
be the evaluation morphism computing values of morphisms of degree $v$ from $\PR ^1$ to $\Sym ^d(X)$ at $0$ and $\infty $. This is a regular morphism of quasi-projective schemes over $k$ (see \cite{Kollar} or \cite{Debarre} for details on evaluation morphisms).

Let
  $$
  I^v_d\to \Sym ^{d,d}(X)
  $$
be the scheme-theoretic image of the morphism $\ev (0,\infty )$. 

\begin{proposition}
\label{MumfordRoitmanKey}
For any $v,d\in \NN $,
  $$
  I^v_d(k)\subset \cl _d^{-1}(0)\; .
  $$
\end{proposition}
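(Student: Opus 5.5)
The plan is to handle separately the points in the honest image of $\ev (0,\infty )$ and the points in its closure, since $I^v_d$ is the scheme-theoretic image, i.e. the Zariski closure of the (constructible) set-theoretic image. For points in the image the claim is immediate. If $(A,B)\in \Sym ^{d,d}(X)(k)$ equals $\ev (0,\infty )(f)$ for some $k$-point $f$ of $\MorS ^v(\PR ^1,\Sym ^d(X))$, then $f:\PR ^1\to \Sym ^d(X)$ is a regular morphism with $f(0)=A$ and $f(\infty )=B$. Corollary \ref{rateqP1path} with $u=0$ and $C=0$ then gives $A\sim B$, so $\cl _d(A,B)=0$. A point that is merely a specialisation of image points needs more work, and this closure step is the real content of the proposition.

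For the closure step, let $(A_0,B_0)$ be a $k$-point of $I^v_d$. By Chevalley's theorem the image is constructible, hence contains a dense open subset of each irreducible component of $I^v_d$. So we can find an irreducible curve through $(A_0,B_0)$ whose generic point lies in the image. Normalising it and making a finite base change to lift the generic point to $\MorS ^v$, we obtain the following data:
\begin{itemize}
\item a smooth connected curve $T$ over $k$ with a point $t_0\in T(k)$;
\item a dense open $T^\circ =T\setminus \{ t_0\} $;
\item a morphism $F^\circ :T^\circ \times \PR ^1\to \Sym ^d(X)$ such that $F^\circ (t,0)$ and $F^\circ (t,\infty )$ tend to $A_0$ and $B_0$ as $t\to t_0$.
\end{itemize}
Since $\Sym ^d(X)$ is projective, $F^\circ $ extends to a rational map $F:T\times \PR ^1\dashrightarrow \Sym ^d(X)$. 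Its indeterminacy consists of finitely many points of the smooth surface $T\times \PR ^1$, and it is resolved by finitely many blow-ups at points, which works in every characteristic. This gives a smooth surface $S\to T\times \PR ^1$ and a regular morphism $\tilde F:S\to \Sym ^d(X)$.

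The fibre of $S$ over $t_0$ is the strict transform of $\{ t_0\} \times \PR ^1$ together with the exceptional curves. It is a connected tree of smooth rational curves $E_0,\ldots ,E_m$, each isomorphic to $\PR ^1$. The strict transforms of the sections $T\times \{ 0\} $ and $T\times \{ \infty \} $ meet this fibre in points $p,q$, and by continuity $\tilde F(p)=A_0$ and $\tilde F(q)=B_0$. Choose the chain $p\in E_{i_0},E_{i_1},\ldots ,E_{i_r}\ni q$ of consecutive intersecting components, which exists because the fibre is connected. Each restriction $\tilde F|_{E_{i_j}}:\PR ^1\to \Sym ^d(X)$ is a regular morphism. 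By Corollary \ref{rateqP1path} (again with $u=0$), it makes the cycles at any two of its points rationally equivalent. Chaining these through the intersection points and using transitivity of rational equivalence gives $A_0\sim B_0$, that is, $(A_0,B_0)\in \cl _d^{-1}(0)$.

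I expect the main obstacle to be setting up the degeneration cleanly. Two points need care: that one can reach a given closure point by a one-parameter family whose generic member genuinely lifts to $\MorS ^v$ (the finite base change), and that the limits of the marked sections land exactly on $A_0$ and $B_0$. Both look standard. The resolution of indeterminacy by point blow-ups on a smooth surface, and the connectedness of the special fibre (Zariski's main theorem, the fibre of a birational proper map to a normal surface being connected), are characteristic-free. This is exactly what makes the argument valid in positive characteristic. Once that is done, the rest is a formal application of Corollary \ref{rateqP1path}.
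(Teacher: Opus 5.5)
Your proposal is correct and follows essentially the same route as the paper. Both arguments pass from a closure point to an irreducible curve in a component of $I^v_d$ whose generic point lies in the image, lift it to $\MorS^v(\PR^1,\Sym^d(X))$ (the paper picks a curve $T_0$ there, you make a finite base change), extend to a rational map from a smooth curve times $\PR^1$ to $\Sym^d(X)$, resolve indeterminacy by point blow-ups, and join $A$ and $B$ through a chain of rational curves in the special fibre. Your version is in fact more precise on two points: the indeterminacy locus is a finite set of points rather than the curves $Q\times\PR^1$ the paper mentions, and you spell out the chain argument, applying Corollary \ref{rateqP1path} componentwise on the tree over $t_0$.
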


\begin{pf}
The scheme-theoretic image $I^v_d$ is a closed subscheme in the symmetric power, which is nothing but the Zariski closure of the set-theoretic image 
  $$
  E^v_d\subset \Sym ^{d,d}(X)
  $$ 
of the morphism $\ev (0,\infty )$, with the induced reduced closed subscheme structure on it, see Tag 056B in \cite{StacksProject}. 

In particular, $I^v_d(k)$ is a subset in the set
    $$
    \Sym ^{d,d}(X)(k)=\Sym ^d(X(k))\times \Sym ^d(X(k))\; .
    $$

If a point
   $$
   (A,B)\in I^v_d(k)
   $$
is in $E^v_d$, then, of course, $A$ is rationally equivalent to $B$, and then $(A,B)$ is in $\cl _d^{-1}(0)$. 

Suppose 
  $$
  (A,B)\in I^v_d(k)\smallsetminus E^v_d\; .
  $$ 

A morphism of quasi-projective varieties is quasi-compact and locally of finite presentation. By Chevalley's theorem, Tag 054J in \cite{StacksProject}, the set-theoretical image $E^v_d$ of the morphism $\ev (0,\infty )$ is constructible, i.e. a finite union of locally closed subsets in the target variety $\Sym ^{d,d}(X)$. Let 
  $$
  I\subset I^v_d
  $$ 
be an irreducible component of the closed subscheme $I^v_d$ in $\Sym ^{d,d}(X)$ containing $(A,B)$ Embed $I$ in to a projective space, and use the Bertini theorem (Theorem 6.3 on pp 66 - 67 in \cite{Jouanolou}) to draw an irreducible curve 
  $$
  C\subset I
  $$ 
through the point $(A,B)$ in $I$. Let also
  $$
  C_0=C\cap E^v_d
  $$
be the set of points on the curve $C$ which are contained in the set-theoretic image of the map $\ev (0,\infty )$.

Choose also an irreducible curve 
  $$
  T_0\subset \MorS ^v(\PR ^1,\Sym ^d(X))\; ,
  $$
such that the closure of its set-theoretic image, under the map $\ev (0,\infty )$, is $C$, and let $T$ be the projective closure of $T_0$ over $k$. Let then $\tilde T$ be the normalization of $T$, and let $\tilde T_0$ be the pre-image of $T_0$ in $\tilde T$ under the normalization morphism $\tilde T\to T$. 

Notice that since the curve $\tilde T$ is smooth, the composition
  $$
  f:\tilde T\to T\dasharrow C
  $$
is regular. 

This all can be illustrated by the commutative diagram
  $$
  \diagram
  \tilde T \ar@/^3pc/[rrrr]^f \ar[rr]^-{} & & T \ar@{-->}[rr]^-{} & & C \ar[rr]^-{} & & I \ar[r]^-{} & I^v_d \ar[r]^-{} & \Sym ^{d,d}(X)\\ \\
  \tilde T_0 \ar[uu]_-{} \ar[rr]^-{} & & T_0 \ar[rr]^-{} \ar[uu]_-{} & &
  C_0 \ar[uu]_-{} \ar[rr]^-{} & & I\cap E^v_d \ar[uu]_-{} \ar[r]^-{} & E^v_d \ar[uu]_-{}
  \enddiagram
  $$

The regular morphism
  $$
  g_0:S_0=\tilde T_0\times \PR ^1\to T_0\times \PR ^1\to 
  \MorS ^v(\PR ^1,\Sym ^d(X))\times \PR ^1
  \stackrel{\ev }{\lra }\Sym ^d(X)  
  $$
defines a rational map
  $$
  g:S=\tilde T\times \PR ^1\dasharrow \Sym ^d(X)\; ,
  $$
where $\ev $ is the evaluation morphism $\ev ^v$. 

Again, since $\tilde T$ is smooth, the restrictions $g|_{\tilde T\times 0}$ and $g|_{\tilde T\times \infty }$ are regular surjective morphisms from $\tilde T$ to $C$.

If  
  $$
  P\in \tilde T
  $$ 
is a point mapped to $(A,B)$ under the regular morphism $f$, we obtain that
  $$
  g|_{\tilde T\times 0}(P)=A\qand g|_{\tilde T\times \infty }(P)=B\; .
  $$

 The determinacy locus of the rational map $g$ is a finite collection of rational curves $Q\times \PR ^1$ on $S$, where $Q$ runs through the points in $\tilde T\smallsetminus \tilde T_0$. Blowing up $S$ at points supported on the curves $Q\times \PR ^1$, we can resolve the indeterminacy locus and obtain a regular morphism
  $$
  \tilde g:\tilde S\to \Sym ^d(X)\; ,
  $$
such that both points $A$ and $B$ are in the set-theoretic image of $\tilde g$ and connected by a chain of the images of rational curves on $\tilde S$. 

It follows that $A$ is rationally equivalent to $B$ on $X$. That is, 
  $$
  (A,B)\in \cl _d^{-1}(0)\; ,
  $$
as required.
\end{pf}

Let 
  $$
  s:\Sym ^{d,d,u,u}(X)\to \Sym ^{d+u,d+u}(X)
  $$
be the morphism of schemes acting on $k$-points by the formula
  $$
  s(A,B,C)=(A+C,B+C)\; ,
  $$
and define a closed subscheme $W^{u,v}_d$ in $\Sym ^{d,d,u,u}(X)$ as the pullback of the closed subscheme $I^v_{d+u}$ under the morphism $s$, i.e. the diagram
  $$
  \diagram
  W_d^{u,v}\ar[rr]^-{} \ar[dd]^-{} & &
  \Sym ^{d,d,u}(X) \ar[dd]^-{s} \\ \\
  I^v_{d+u}\ar[rr]^-{i} & & \Sym ^{d+u,d+u}(X)
  \enddiagram
  $$
is a pullback square. Let also
  $$
  V^{u,v}_d=\pr _{1,2}(W^{u,v}_d)
  $$
to be the scheme-theoretic image of the closed subscmeme $W^{u,v}_d$ under the projection 
  $$
  \pr _{1,2}:\Sym ^{d,d,u}(X)\to \Sym ^{d,d}(X)\; .
  $$
Then $V^{u,v}_d$ is a closed subscheme in $\Sym ^d(X)$, for all triples $u,v,d$ in $\NN $.

The following result, the Mumford-Roitman lemma, is a straightforward consequence of Corollary \ref{rateqP1path} and Proposition \ref{MumfordRoitmanKey}.

\begin{corollary}
\label{MumfordRoitmanKey2}
  $$
  \cl _d^{-1}(0)=\cup _{u,v\in \NN }V_d^{u,v}(k)\; .
  $$
\end{corollary}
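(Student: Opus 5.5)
We prove the equality by showing both inclusions. Throughout, $V^{u,v}_d$ is the scheme-theoretic image of $W^{u,v}_d$ under $\pr _{1,2}$. Since $\Sym ^{d,d,u,u}(X)$ is projective, $\pr _{1,2}$ is proper, so this image is closed and equals the set-theoretic image. In particular, a $k$-point $(A,B)$ lies in $V^{u,v}_d(k)$ exactly when there is a $k$-point $(A,B,C,C')$ of $W^{u,v}_d$ over it. Here $k$ is algebraically closed, and a closed point of a closed subscheme of a variety of finite type has a closed $k$-point in its fibre. The statement's notation is slightly inconsistent, as $s$ is written on $\Sym ^{d,d,u}(X)$. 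We read $s$ as $(A,B,C)\mapsto (A+C,B+C)$, with a single $C\in \Sym ^u(X)$; this matches Corollary \ref{rateqP1path}, where the same $C$ is added to both sides.

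For the inclusion $\supseteq$, take $(A,B)\in V^{u,v}_d(k)$ and lift it to a $k$-point $(A,B,C)$ of $W^{u,v}_d$. By the pullback square, $s(A,B,C)=(A+C,B+C)$ lies in $I^v_{d+u}(k)$. Proposition \ref{MumfordRoitmanKey}, applied with $d+u$ in place of $d$, then gives $\cl _{d+u}(A+C,B+C)=0$, i.e. $A+C\sim _{\rm rat} B+C$. Rational equivalence classes form a group, so $[A-B]=[(A+C)-(B+C)]=0$, and hence $(A,B)\in \cl _d^{-1}(0)$.

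For the inclusion $\subseteq$, take $(A,B)\in \Sym ^d(X(k))\times \Sym ^d(X(k))$ with $A\sim _{\rm rat}B$. Corollary \ref{rateqP1path} gives $u\geq 0$, a positive cycle $C$ of degree $u$ and a morphism $f:\PR ^1\to \Sym ^{d+u}(X)$ with $f(0)=A+C$ and $f(\infty )=B+C$; here we identify $k$-points of symmetric powers with effective cycles via $\sv _X$. The morphism $f$ has some degree $v$ with respect to the fixed projective embedding of $\Sym ^{d+u}(X)$, so $f$ is a $k$-point of $\MorS ^v(\PR ^1,\Sym ^{d+u}(X))$. Then $\ev (0,\infty )(f)=(A+C,B+C)$ lies in $E^v_{d+u}\subset I^v_{d+u}(k)$. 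Consequently $(A,B,C)\in W^{u,v}_d(k)$, and projecting gives $(A,B)\in V^{u,v}_d(k)$.

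We expect the main obstacle to be bookkeeping rather than substance. First, one must make sure that the $k$-points of the scheme-theoretic image $V^{u,v}_d$ are exactly the projections of $k$-points of $W^{u,v}_d$; properness and the algebraic closedness of $k$ handle this. Second, the degree $v$ of $f$ has to be measured with respect to the fixed embedding chosen for $\Sym ^{d+u}(X)$, since $I^v_{d+u}$ was defined using that embedding. The substantive input, that limit points of $E^v_{d+u}$ are still rationally equivalent pairs, is already supplied by Proposition \ref{MumfordRoitmanKey}.
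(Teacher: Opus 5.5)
Your proof is correct and follows the paper's argument. For $\subseteq$ you apply Corollary \ref{rateqP1path} and take $v=\deg f$, which puts $(A,B,C)$ in $W^{u,v}_d$. For $\supseteq$ you lift $(A,B)$ to $(A,B,C)$, apply Proposition \ref{MumfordRoitmanKey} with $d+u$ in place of $d$, and cancel $C$. Your remarks on the properness of $\pr _{1,2}$ (so that, via Lemma \ref{trivia}, $k$-points of $V^{u,v}_d$ lift to $k$-points of $W^{u,v}_d$) and on reading $s$ with a single $C$ make explicit what the paper leaves implicit.
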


\begin{pf}
Let $A$ and $B$ be two $0$-cycles on $X(k)$, such that $(A,B)$ is in $\cl _d^{-1}(0)$, i.e. $A$ is rationally equivalent to $B$ on $X$. By Corollary \ref{rateqP1path}, there exist an integer $u\geq 0$, a regular morphism
 $$
 f:\PR ^1\to \Sym ^{d+u}(X)
 $$
over $k$, and a positive $0$-cycle $C$ of degree $u$ on $X(k)$, such that $A+C$ and $B+C$ are positive $0$-cycles of degree $d+u$ on $X(k)$, 
  $$
  \sv _X(A+C)=f(0)
  $$
and
  $$
  \sv _X(B+C)=f(\infty )\; .
  $$
Let $v$ be the degree of the morphism $f$. Then 
  $$
  f\in I^v_{d+u}(k)
  $$
and
  $$
  i(f)=s(A,B,C)\; .
  $$
It follows that 
  $$
  (A,B)\in V^{u,v}_d(k)\; .
  $$

Conversely, a $k$-rational point in $V^{u,v}_d$ is a point
  $$
  (A,B)\in \Sym ^{d,d}(X)(k)\; ,
  $$
such that there exist a positive $0$-cycle of degree $u$ on $X$ and 
  $$
  s(A,B,C)\in I^v_{d+u}(k)\; .
  $$
Then 
  $$
  (A+C,B+C)\in \cl _{d+u}^{-1}(0)
  $$
by Proposition \ref{MumfordRoitmanKey}. In other words, $A+C$ is rationally equivalent to $B+C$, and hence $A$ is rationally equivalent to $B$ on $X$, i.e. 
  $$
  (A,B)\in \cl _d^{-1}(0)\; .
  $$
\end{pf}

\section{General structure of Gysin kernels}
\label{gys}

\subsection{The Albanese morphism and the variety $A_1$}
\label{A1}

Let $k$ be an algebraically closed field, and let 
  $$
  l\neq \cha (k)
  $$
be a prime number, different from the characteristic of the field $k$. 

Let $A$ be an abelian variety over $k$. Let $A_{l^n}$ be the group of $l^n$-torsion points on $A$, let
  $$
  T_l(A)=\lim _n A_{l^n}
  $$
be the Tate module of $A$, with regard to $l$, and let
  $$
  V_l(A)=T_l(A)\otimes_{\ZZ _l}\QQ _l
  $$
be the corresponding vector space over $\QQ _l$.

It is convenient to work with $l$-adic homology and cohomology, and the duality between them, as studied in \cite{Laumon}. Let $a$ be the dimension of the abelian variety $A$. Laumon's comparison theorem, see page 170 in \cite{Laumon}, gives
  $$
  H_1(A,\QQ _l)\cong H^{2a-1}(A,\QQ _l(a))\; .
  $$
By Poincar\'e duality,
  $$
  H^{2a-1}(A,\QQ _l(a))\simeq H^1(A,\QQ _l)^{\vee }\; ,
  $$
where $V^{\vee }$ is always the vector space dual to a vector space $V$. 

On the other hand, it is well-known that
  $$
    H^1(A,\QQ _l)\simeq V_l(A)^{\vee }\; ,
  $$
see, for example, Theorem 12.1 on page 55 in \cite{MilneLEC}. Therefore,
  $$
  H_1(A,\QQ _l)\simeq H^1(A,\QQ _l)^{\vee }\simeq 
  (V_l(A)^{\vee })^{\vee }\simeq V_l(A)\; .
  $$

Now, let
  $$
  \alpha :A\to B
  $$
be a surjective homomorphism of abelian varieties over $k$, let 
  $$
  A_1=\ker (\alpha )^0
  $$
be the connected component of $0$ in the kernel $\ker (\alpha )$, and let
  $$
  i_1:A_1\to A
  $$
be the closed immersion of the abelian variety $A_1$ in to the abelian variety $A$ over $k$.

\begin{lemma}
\label{dualities}
The sequences
  $$
  0\to H_1(A_1,\QQ _l)\stackrel{{i_1}_*}{\lra }H_1(A,\QQ _l)
  \stackrel{\alpha _*}{\lra }H_1(B,\QQ _l)\to 0\; ,
  $$
  $$
  0\to H^1(B,\QQ _l)\stackrel{\alpha ^*}{\lra }H^1(A,\QQ _l)\stackrel{i_1^*}{\lra }
  H^1(A_1,\QQ _l)\to 0
  $$
are short exact.
\end{lemma}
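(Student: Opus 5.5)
Both sequences will be reduced to the identification $H_1(-,\QQ _l)\simeq V_l(-)$ established just before the statement. The first sequence then becomes a statement about rational Tate modules, and the second is its $\QQ _l$-linear dual. This identification is functorial in homomorphisms of abelian varieties: the isomorphisms $H^1(A,\QQ _l)\simeq V_l(A)^{\vee }$ and Poincar\'e/Laumon duality are natural, so pushforward on $H_1$ corresponds to the induced map on Tate modules. Under this identification ${i_1}_*$ becomes $V_l(i_1)$ and $\alpha _*$ becomes $V_l(\alpha )$. It therefore suffices to prove that
  $$
  0\to V_l(A_1)\to V_l(A)\to V_l(B)\to 0
  $$
is exact, and then dualize.

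To prove this, factor the situation through the exact sequence of commutative group schemes $0\to \ker (\alpha )\to A\to B\to 0$. The quotient $\ker (\alpha )/A_1$ is a finite group scheme $F$, say of order $N$.

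Surjectivity of $V_l(\alpha )$ comes from the surjectivity of $\alpha $ on $k$-points ($k$ algebraically closed), which gives surjectivity on $l^n$-torsion up to bounded error. Explicitly, for $y\in B_{l^n}$ pick $x$ with $\alpha (x)=y$; then $l^nx\in \ker (\alpha )(k)$, and $Nl^nx\in A_1(k)$. Since $A_1(k)$ is divisible, we can correct $x$ by an element of $A_1$ to land in $A_{Nl^n}$. Passing to the limit and tensoring with $\QQ _l$, this bounded defect disappears. A cleaner route is by dimensions: $\dim V_l$ equals $2\dim $, and $\dim A=\dim A_1+\dim B$, so exactness in the middle together with injectivity gives surjectivity.

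Injectivity of $V_l(i_1)$ is clear, since $i_1$ is a closed immersion and hence injective on torsion points. For exactness in the middle, an element of $T_l(A)$ killed by $\alpha $ is a compatible system of torsion points in $\ker (\alpha )$. Multiplying by $N$ moves it into $T_l(A_1)$, so the kernel of $V_l(\alpha )$ equals $V_l(A_1)$ after inverting $l$. The dimension count $2\dim A=2\dim A_1+2\dim B$ then forces surjectivity, which makes the sequence exact.

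The cohomological sequence follows by applying $\Hom _{\QQ _l}(-,\QQ _l)$, which is exact on finite-dimensional vector spaces. We use $H^1\simeq H_1^{\vee }$ from the chain of isomorphisms preceding the lemma, together with the fact that $\alpha ^*$ and $i_1^*$ are the transposes of $\alpha _*$ and ${i_1}_*$.

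The main obstacle is the naturality of the identification $H_1\simeq V_l$, in particular that Laumon's comparison and Poincar\'e duality intertwine proper pushforward with the map on Tate modules. I would handle this by defining $H_1$ directly as the dual of $H^1$, so that pushforward is the transpose of pullback, and by invoking the standard functoriality of $H^1(A,\QQ _l)\simeq V_l(A)^{\vee }$ coming from the Kummer sequence. This avoids any degree shifts.
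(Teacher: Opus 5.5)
Your proof is correct and follows the same strategy as the paper: identify $H_1(-,\QQ _l)$ with $V_l(-)$, prove exactness of $0\to V_l(A_1)\to V_l(A)\to V_l(B)\to 0$, and dualize. The only difference is in the middle step. The paper factors $\alpha $ through the quotient abelian variety $A/A_1$, using exactness of rational Tate modules on $0\to A_1\to A\to A/A_1\to 0$ and the fact that the isogeny $A/A_1\to B$ is an isomorphism on $V_l$. You instead work directly with $\alpha $, getting the kernel by multiplying by $N$ and surjectivity from $\dim _{\QQ _l}V_l(A)=2\dim A$, which avoids constructing the quotient.
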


\begin{pf}
The quotient-group
  $$
  A/A_1
  $$
is an abelian variety, see, for example, Section 9.5 in \cite{Polishchuk}, and the morphism $\alpha $ factors through the quotient as
  $$
  A\lra A/A_1\stackrel{\bar \alpha }{\lra }B\; .
  $$
Here $\bar \alpha $ is a regular surjective homomorphism with finite kernel
  $$
  \ker (\alpha )/A_1\; ,
  $$ 
hence an isogeny of abelian varieties over $k$.

Let
  $$
  C=A/A_1
  $$
be the quotient abelian variety, and consider the short exact sequence of abelian varieties
  $$
  0\to A_1\lra A\lra C\to 0
  $$
over $k$. It induces a short exact sequence of Tate modules
  $$
  0\to T_l(A_1)\lra T_l(A)\lra T_l(C)\to 0\; ,
  $$
and, tensoring with $\QQ _l$, we also obtain a short exact sequence
  $$
  0\to V_l(A_1)\lra V_l(A)\lra V_l(C)\to 0
  $$
of the corresponding vector spaces over $\QQ _l$.

Since $\bar \alpha :C\to B$ is an isogeny, it induces an isomorphism on $\QQ _l$-vector spaces, and we obtain a short exact sequence of $\QQ _l$-vector spaces
  $$
  0\to V_l(A_1)\stackrel{{i_1}_*}{\lra }V_l(A)
  \stackrel{\alpha _*}{\lra }V_l(B)\to 0\; .
  $$
 
And since first $l$-adic homology are isomorphic to $\QQ _l$-localized Tate modules, the latter short exact sequence is isomorphic to the short exact sequence
  $$
  0\to H_1(A_1,\QQ _l)\stackrel{{i_1}_*}{\lra }H_1(A,\QQ _l)
  \stackrel{\alpha _*}{\lra }H_1(B,\QQ _l)\to 0\; .
  $$

Dualizing, we obtain a short exact sequence
  $$
  0\to H_1(B,\QQ _l)^{\vee }\stackrel{\alpha _*^{\vee }}{\lra }
  H_1(A,\QQ _l)^{\vee }\stackrel{{i_1}_*^{\vee }}{\lra }  
  H_1(A_1,\QQ _l)^{\vee }\to 0\; .
  $$

Moreover, Laumon's duality for proper schemes, see page 170 in \cite{Laumon}, or, equivalently, Poincar\'e duality in smooth proper case, identifies
  $$
  H_1(A,\QQ _l)^{\vee }\simeq H^1(A,\QQ _l)\; ,
  $$
and the same for the abelian varieties $B$ and $A_1$. Under these identifications, the dual homomorphisms $\alpha _*^{\vee }$ and ${i_1}_*^{\vee }$ are the usual pullbacks $\alpha ^*$ and $i_1^*$ on cohomology. Therefore, we also obtain the short  exact sequence
  $$
  0\to H^1(B,\QQ _l)\stackrel{\alpha ^*}{\lra }H^1(A,\QQ _l)\stackrel{i_1^*}{\lra }
  H^1(A_1,\QQ _l)\to 0\; .
  $$
\end{pf}

\medskip

Next, choose an ample bundle $L$ on $A$. Its restriction
  $$
  L_1=L|_{A_1}
  $$
is ample on $K^0$ by Proposition 4.1 on page 23 in \cite{HartshorneAmple}. Then we obtai the standard polarizations 
  $$
  \lambda _L:A\to A^{\vee }
  $$
and 
  $$
  \lambda _{L_1}:A_1\to A_1^{\vee }\; ,
  $$
where $A^{\vee }$ and $A_1^{\vee }$ are abelian varieties dual to $A$ and $A_1$ respectively. 

Since $\lambda _L$ is an isogeny, and we work with coefficients in $\QQ _l$, the morphism $\lambda _L$ induces an isomorphism
  $$
  V_l(A)\stackrel{\sim }{\lra } V_l(A^{\vee })\; .
  $$

The standard Weil pairing, see Chapter I, Section 13 in \cite{MilneAV}, 
  $$
  V_l(A)_\times V_l(A^{\vee })_\to \QQ _l(1)
  $$
is perfect, and gives us an identification
  $$
  V_l(A^{\vee })\simeq V_l(A)^{\vee }(1)\; .
  $$
Then
  $$
  V_l(A)\stackrel{\sim }{\lra }V_l(A)^{\vee }(1)\; .
  $$

Applying this isomorphism togethjer with the standard identification
  $$
  H^1(A,\QQ _l)\simeq V_l(A)^{\vee }\; ,
  $$
see Theorem 12.1 on page 55 in \cite{MilneAV}, we obtain an isomorphism
  $$
  V_l(A)\simeq H^1(A,\QQ _l(1))\; .
  $$

By Laumon's comparison,
  $$
  H_1(A,\QQ _l)\simeq H^{2a-1}(A,\QQ _l(a))\; ,
  $$
see the bottom of page 172 in \cite{Laumon}. By Poincar\'e duality,
  $$
  H^{2a-1}(A,\QQ _l(a))\simeq H^1(A,\QQ _l)^{\vee }\; .
  $$
Therefore,
  $$
  H_1(A,\QQ _l)\simeq H^1(A,\QQ _l)^{\vee }\; ,
  $$
and hence
  $$
  H_1(A,\QQ _l)\simeq V_l(A)^{\vee \vee }\simeq V_l(A)\; .
  $$

As a result, 
  $$
  H_1(A,\QQ _l)\simeq H^1(A,\QQ _l(1))\; ,
  $$
or, equivalently,
  $$
  H^1(A,\QQ _l)\simeq H_1(A,\QQ _l(-1))\; 
  $$

Similarly, using the ample bundle $L_1$ we construct an isomorphism
  $$
  H^1(A_1,\QQ _l)\simeq H_1(A_1,\QQ _l(-1))\; 
  $$

Define a homomorphism $\zeta _1$ as the composition
  $$
  \diagram
  H^1(A_1,\QQ _l) \ar[dd]_-{\sim } \ar[rr]^-{\zeta _1} & & 
  H^1(A,\QQ _l) \\ \\
  H_1(A_1,\QQ _l(-1)) \ar[rr]^-{{i_1}_*} & & H_1(A,\QQ _l(-1)) 
  \ar[uu]^-{\sim } 
  \enddiagram
  $$
to be used in the next sections. 

\begin{lemma}
\label{zetainj}
The homomorphism $\zeta _1$ is injective. 
\end{lemma}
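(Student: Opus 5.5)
The map $\zeta _1$ is defined as the composite of three arrows. The first is the isomorphism $H^1(A_1,\QQ _l)\simeq H_1(A_1,\QQ _l(-1))$ built from the polarization $\lambda _{L_1}$. The second is the push-forward ${i_1}_*$. The third is the inverse of the isomorphism $H^1(A,\QQ _l)\simeq H_1(A,\QQ _l(-1))$ built from $\lambda _L$. Since the two outer arrows are isomorphisms, it suffices to prove that
  $$
  {i_1}_*:H_1(A_1,\QQ _l(-1))\lra H_1(A,\QQ _l(-1))
  $$
is injective.

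I will not re-verify that the vertical arrows are isomorphisms in detail. They are composites of three maps. The first is $V_l(A)\stackrel{\sim }{\lra }V_l(A^{\vee })$, induced by the isogeny $\lambda _L$; an isogeny has finite kernel, so it becomes invertible after tensoring the Tate modules with $\QQ _l$. The second is the Weil pairing identification $V_l(A^{\vee })\simeq V_l(A)^{\vee }(1)$, which holds because the pairing is perfect. The third is the identification $H^1(A,\QQ _l)\simeq V_l(A)^{\vee }$ together with Laumon's comparison $H_1(A,\QQ _l)\simeq V_l(A)$. All of these are isomorphisms, as recalled before the statement, and the same argument with $L_1$ on $A_1$ gives the left vertical arrow, since $L_1$ is ample on $A_1$.

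For the middle arrow, the Tate twist by $\QQ _l(-1)$ is an exact functor on $\QQ _l$-vector spaces: forgetting the Galois action, it is tensoring with a one-dimensional space. Hence injectivity of ${i_1}_*$ on $H_1(-,\QQ _l(-1))$ is equivalent to injectivity of ${i_1}_*:H_1(A_1,\QQ _l)\to H_1(A,\QQ _l)$. Take the surjective homomorphism $\alpha :A\to B$ with $A_1=\ker (\alpha )^0$, as in the setup. Lemma \ref{dualities} then shows that the sequence
  $$
  0\to H_1(A_1,\QQ _l)\stackrel{{i_1}_*}{\lra }H_1(A,\QQ _l)\stackrel{\alpha _*}{\lra }H_1(B,\QQ _l)\to 0
  $$
is short exact. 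In particular ${i_1}_*$ is injective, and therefore $\zeta _1$ is injective.

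The only point needing some care is the claim that the twisted push-forward in the diagram agrees with ${i_1}_*$ on untwisted homology tensored with $\QQ _l(-1)$, i.e.\ that the push-forward is compatible with the twist. This holds because $H_1(-,\QQ _l(-1))=H_1(-,\QQ _l)\otimes \QQ _l(-1)$ functorially in the variety. Concretely, under $H_1\simeq V_l$ the push-forward ${i_1}_*$ is the map $V_l(A_1)\to V_l(A)$ induced by the closed immersion, and the exactness of $0\to T_l(A_1)\to T_l(A)$, which underlies Lemma \ref{dualities}, gives injectivity directly. I expect no real obstacle.
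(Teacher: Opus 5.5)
Your proposal is correct and follows the paper's argument: the two vertical arrows in the diagram defining $\zeta _1$ are isomorphisms, and ${i_1}_*$ is injective by Lemma~\ref{dualities}, so $\zeta _1$ is injective. Your extra remarks on compatibility with the Tate twist and on the injectivity of $T_l(A_1)\to T_l(A)$ only spell out steps the paper leaves implicit.
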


\begin{pf}
Since the vertical arrows in the latter commutative diagram are isomorphisms, and $i_*$ is injective by Lemma \ref{dualities}, the result follows.
\end{pf}

\medskip

We want to apply these general considerations in a situation when $B$ is the Albanese variety of a smooth projective surface, and $A$ is the Jacobian of a smooth projective curve inside it. 

Let first $X$ be a smooth projective variety over $k$, let 
  $$
  \Pic (X)=\Pic _{X/k}
  $$
be the Picard scheme of the variety $X$, let
  $$
  \Pic ^0(X)=\Pic ^0_{X/k}
  $$
be the connected component containing $0$ of the scheme $\Pic (X)$, and let
  $$
  \Alb (X)=\Alb _{X/k}
  $$ 
be the Albanese variety of $X$, i.e. the abelian variety dual to the Picard variety $\Pic ^0(X)$.

Fixing a closed point on $X$, gives us a universal Albanese morphism 
  $$
  a_X:X\to \Alb (X)
  $$
over $k$, see Chapter II, Section 3 in \cite{LangAV}. 

The following lemma is well-known, but we recall a proof for the convenience of the reader.

\begin{lemma}
\label{H1Albanese}
The morphism $a_X$ induces an isomorphism on $l$-adic first homology groups,
  $$
  {a_X}_*:H_1(X,\QQ _l)\stackrel{\sim }{\lra }H_1(\Alb _{X/k},\QQ _l)\; .
  $$
\end{lemma}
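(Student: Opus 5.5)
Since both sides are finite-dimensional $\QQ _l$-vector spaces, I will dualize and prove the equivalent cohomological statement: the pullback
  $$
  a_X^*:H^1(\Alb _{X/k},\QQ _l)\lra H^1(X,\QQ _l)
  $$
is an isomorphism. By Laumon's duality, $H_1(-,\QQ _l)\simeq H^1(-,\QQ _l)^{\vee }$ for the smooth proper schemes $X$ and $\Alb _{X/k}$, and under these identifications ${a_X}_*$ is the transpose of $a_X^*$. This is the same pattern as in the proof of Lemma \ref{dualities}. So it suffices to treat $H^1$.

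For $H^1$ I will go through Kummer theory and $\mu _{l^n}$-torsors. Since $k$ is algebraically closed, $H^1(X,\mu _{l^n})=H^1(X,\ZZ /l^n)(1)$. The Kummer sequence $0\to \mu _{l^n}\to \Gm \stackrel{l^n}{\to }\Gm \to 0$ gives $H^1(X,\mu _{l^n})\simeq \Pic (X)[l^n]$, because $\mathcal O(X)^*=k^*$ is $l$-divisible. Every $l^n$-torsion line bundle is algebraically equivalent to zero, since its class in $\NS (X)$ is torsion and lies in the finite group $\NS (X)_{\tors }$. Hence $\Pic (X)[l^n]=\Pic ^0_{X/k}(k)[l^n]$ modulo this finite torsion of $\NS $, and the discrepancy vanishes after passing to the inverse limit and tensoring with $\QQ _l$. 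The same reasoning for $\Alb _{X/k}$, where $\NS $ has no torsion contribution to the Tate module, gives $H^1(\Alb ,\QQ _l(1))\simeq V_l(\Pic ^0(\Alb ))$. Both identifications are functorial in the map $a_X$, with the pullback of line bundles
  $$
  a_X^*:\Pic ^0(\Alb _{X/k})\lra \Pic ^0_{X/k}\; .
  $$

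The remaining point, and the main obstacle, is that this pullback is an isogeny. It is in fact an isomorphism on reduced Picard varieties: by the universal property of the Albanese, $\Alb _{X/k}$ is dual to $(\Pic ^0_{X/k})_{\rd }$, and $a_X^*$ identifies $\Pic ^0(\Alb )=\Alb ^{\vee }$ with $(\Pic ^0_{X/k})_{\rd }$ (Lang, Chapter II and VI). In positive characteristic $\Pic ^0_{X/k}$ may be non-reduced. However, the Tate module only sees $k$-points, and $l\neq \cha (k)$, so nonreducedness is invisible and $V_l$ of $\Pic ^0$ and of its reduction coincide. Therefore $a_X^*$ induces an isomorphism $V_l(\Pic ^0(\Alb ))\simeq V_l(\Pic ^0_X)$.

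Putting these together, $a_X^*$ is an isomorphism on $H^1(-,\QQ _l(1))$, hence on $H^1(-,\QQ _l)$ after untwisting. Dualizing as in the first step gives the claimed isomorphism ${a_X}_*$ on $H_1$.
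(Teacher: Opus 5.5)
Your argument is correct, and its skeleton is the paper's: dualize to $H^1$, use the Kummer sequence to get $H^1(X,\QQ _l(1))\simeq V_l(\Pic (X))$, and discard the N\'eron--Severi contribution because $V_l$ of a finitely generated group vanishes. The one real difference is on the Albanese side.

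\begin{itemize}
\item The paper identifies $H^1(\Alb _{X/k},\QQ _l)\simeq V_l(\Pic ^0(X))(-1)$ directly via the Weil pairing and the duality $\Alb _{X/k}=\Pic ^0(X)^{\vee }$. It then reads off an abstract exact sequence $0\to H^1(\Alb ,\QQ _l)\to H^1(X,\QQ _l)\to V_l(\NS (X))(-1)\to 0$, without checking that its first arrow is $a_X^*$.
\item You run Kummer theory on $\Alb _{X/k}$ as well and use that $a_X^*:\Alb ^{\vee }\to (\Pic ^0_{X/k})_{\rd }$ is an isomorphism (an isogeny would suffice). By functoriality of the Kummer sequence, this shows the isomorphism really is the map induced by $a_X$.
\end{itemize}

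Your route also handles the possible non-reducedness of $\Pic ^0_{X/k}$ in characteristic $p$, which the paper passes over by calling $\Pic ^0(X)$ an abelian variety. Both routes rest on the same substantive input, namely Albanese--Picard duality.

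One slip to fix: torsion line bundles need not be algebraically trivial. For example, the canonical class of an Enriques surface over $\CC $ is $2$-torsion but not in $\Pic ^0$. So your sentence asserting that every $l^n$-torsion line bundle is algebraically equivalent to zero is false as written. What you actually need, and state in the next sentence, is weaker: $\Pic (X)[l^n]/\Pic ^0(X)[l^n]$ injects into the finite group $\NS (X)_{\tors }$. An inverse system of subgroups of a finite $l$-group, with transition maps given by multiplication by $l$, has zero limit. Hence $T_l\Pic ^0(X)\simeq T_l\Pic (X)$.
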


\begin{proof}
By Laumon's duality, see p 170 in \cite{Laumon}, and the Poincar\'e duality, it is equivalent to show that the pullback homomorphism
  $$
  a_X^*:H^1(\Alb _{X/k},\QQ _l)\to H^1(X,\QQ _l)
$$
is an isomorphism.

Since the Albanese variety is dual to the Picard variety, the Weil pairing yields an isomorphism
  $$
  H^1(\Alb (X),\QQ _l)\simeq V_l(\Pic ^0(X))(-1)\; .
  $$

Let $\NS (X)$ be the N\'eron-Severi group of the variety $X$, and consider the following commutative diagram
  $$
  \diagram
  0 \ar[r]^-{} & \Pic ^0(X) \ar[dd]^-{l^n} \ar[rr]^-{} & & \Pic (X) 
  \ar[dd]^-{l^n} \ar[rr]^-{} & & 
  \NS (X) \ar[dd]^-{l^n} \ar[r]^-{} & 0 \\ \\
  0 \ar[r]^-{} & \Pic ^0(X)  \ar[rr]^-{} & & \Pic (X) 
  \ar[rr]^-{}  & &
  \NS (X) \ar[r]^-{} & 0
  \enddiagram
  $$  

As $\Pic ^0(X)$ is an abelian variety over $k$, and the field $k$ is algebraically closed, the group of $k$-rational points on $\Pic ^0(X)$ is divisible. By the Snake Lemma, we get a short exact sequence
  $$
  0\to \Pic ^0(X)_{l^n}\to {\Pic ^0(X)}_{l^n}\to \NS (X)_{l^n}\to 0\; .
  $$

The multiplication by $l$ homomorphism
  $$
  l:\Pic ^0(X)_{l^{n+1}}\to \Pic ^0(X)_{l^n}
  $$
is surjective. Passing to the inverse limit gives us the short exact sequence of rationalized Tate modules
  $$
  0\to V_l(\Pic ^0(X))\to V_l(\Pic ^0(X))\to V_l(\NS (X))\to 0\; .
  $$

The short exact sequence of \'etale sheaves

  $$
  0\to \mu _{\l^n}\to \Gm \stackrel{l^n}{\lra }\Gm \to 0\; .
  $$
yields the isomorphism
  $$
  H^1(X,\QQ _l(1))\simeq V_l(\Pic (X))\; .
  $$

Therefore, we obtain the short exact sequence
  $$
  0\to H^1(\Alb (X),\QQ _l)\to H^1(X,\QQ _l)\to V_l(\NS (X))(-1)\to 0\; .
  $$

Since $\mathrm{NS}(X)$ is a finitely generated abelian group, its Tate module is finite and $V_l(\NS (X))$ vanishes, whence the result.
\end{proof}

Now assume that $X$ is a smooth projective surface over $k$. Fix a closed immersion of $X$ in to a projective space $\PR $ over $k$, and let $C$ be a smooth hyperplane section of $X$ in $\PR $. Let also
  $$
  \iota :C\to X
  $$
be the closed immersion of the curve $C$ in to $X$. 

The standard theta-divisor on the Jacobian 
  $$
  \Jac _{C/k}=\Pic ^0_{C/k}
  $$ yields a principle polarization which allows us to identify the Jacobian with its dual variety
  $$
  \Alb _{C/k}\; .
  $$
For short of notation, let
  $$
  A=\Alb _{C/k}
  $$
and 
  $$
  B=\Alb _{X/k}\; .
  $$
The closed immersion $\iota $ induces a homomorphism of abelian varieties 
  $$
  \alpha :A\to B
  $$
over $k$. 

Since $C$ is a hyperplane section of $X$ in $\PR $, by Lefschetz theorem, the homomorphism 
  $$
  \iota _*:H_1(C,\ZZ _l)\to H_1(X,\ZZ _l)
  $$
is surjective. By Lemma \ref{H1Albanese}, it follows that the homomorphism $\alpha $ is a surjective regular homomorphism of abelian varieties over the ground field $k$. 

On the other hand, the morphism $a_X$ factorizes through rational equivalence of $0$-cycles, and induces the Albanese homomorphism
  $$
  \alb _X:A_0(X)\to \Alb _{X/k}(k)
  $$
from the group of degree $0$ zero-cycles modulo rational equivalence to the group of $k$-rational points of the Albanese variety of the surface $X$. And we also have the Gysin homomorphism
  $$
  \iota _*:A(k)\to A_0(X)\; ,
  $$
where $A$ is the Jacobian of $C$.

As we mentioned in Introduction, the Gysin kernel 
  $$
  \Gys _0(k)=\ker (\iota _*)
  $$ 
is then contained in the bigger kernel
  $$
  \ker (\alb _X\circ \iota _*)
  $$
of the composition of the Gysin homomorphism $\iota _*$ on Chow groups of $0$-cycles and the Albanese homomorphism $\alb _X$.

Let
  $$
  G_1=\ker (\alpha )
  $$
be the kernel of the surjective regular homomorphism $\alpha $ of abelian varieties over $k$. Then $G_1$ is a group scheme, which is a closed subscheme in the Jacobian $A$, and the kernel of the composition $\alb \circ \iota _*$ is the group of $k$-rational points in $G_1$. 

Thus, the kernel 
  $$
  \Gys _1(k)=\ker (\alb \circ \iota _*)
  $$ 
is the set of $k$-rational points of the subscheme $G_1$ in $A$. 

Let $A_1$ be the connected component of the group scheme $G_1$ containing $0$. Then $A_1$ is an abelian subvariety in $A$,
  $$
  A_1(k)\subset \Gys _1(k)\; ,
  $$
and we can apply all the abstract Lemma \ref{dualities} and Lemma \ref{zetainj} to the connected component $A_1$ of the kernel of the homomorphism from the Jacobian $A$ of the curve $C$ to the Albanese variety $B$ of the surface $X$ over $k$.

\subsection{Countability lemmas over an uncountable domain}
\label{CL}

For any scheme $X$ let $|X|$ be the topological space underlaying the scheme $X$. If $Z$ is a closed subscheme in $X$, then $|Z|$ is called the support of $Z$ in $X$. 

Let $k$ be a field, and let $X$ be a scheme over $k$. For any subset
  $$
  S\subset |X|
  $$
let
   $$
   S(k)=S\cap X(k)
   $$
be the set of $k$-rational points in the set $S$. If
  $$
  f:X\to Y
  $$
is a morphism of schemes over $k$, and $S\subset |X|$, write
  $$
  f(S)\subset |Y|
  $$
for the set-theoretic image of $S$ under the morphism $f$. 


\begin{lemma}
\label{trivia}
Let $f: X \to Y$ be a morphism of schemes over a algebraically closed field $k$. Then
  $$
  f(X(k)) = f(X)(k)\; .
  $$	
\end{lemma}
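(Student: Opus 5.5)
Both inclusions come down to what a $k$-rational point is, together with one standard fact about morphisms locally of finite type. Recall the conventions of this subsection. A point $x\in |X|$ is $k$-rational if its residue field $\kappa (x)$ equals $k$. Then $X(k)$ is identified with the set of such points, $S(k)=S\cap X(k)$, and $f(X)$ is the set-theoretic image $f(|X|)\subset |Y|$. So we must show
  $$
  f(X(k))=f(|X|)\cap Y(k)\; .
  $$
Throughout I assume, as everywhere in the paper, that $X$ and $Y$ are locally of finite type over $k$. Without this the statement fails: take $X=\Spec (k(t))$ and $Y=\AF ^1_k$.

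For the inclusion $\subset $, take $x\in X(k)$ and put $y=f(x)$. Clearly $y\in f(|X|)$. The morphism $f$ is over $k$, so it induces $k$-embeddings $k\subset \kappa (y)\subset \kappa (x)=k$. Hence $\kappa (y)=k$ and $y\in Y(k)$.

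For the inclusion $\supset $, take $y\in Y(k)$ with $y=f(x)$ for some $x\in |X|$. Consider the scheme-theoretic fibre $X_y=X\times _Y\Spec (\kappa (y))$. It is a scheme over $\kappa (y)=k$, it is locally of finite type because $f$ is (being a morphism of schemes locally of finite type over $k$), and it is nonempty since it contains a point over $x$. A nonempty scheme locally of finite type over a field has a closed point, namely a closed point of any nonempty affine open. By the Nullstellensatz the residue field of that point is a finite extension of $k$, hence equals $k$ because $k$ is algebraically closed. So we obtain $x'\in X_y(k)$. Its image in $X$ is a point with residue field $k$, that is, a point of $X(k)$, and it maps to $y$. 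Therefore $y\in f(X(k))$.

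The only substantive step is the existence of a $k$-rational point on a nonempty fibre. This is exactly where the hypotheses that $k$ is algebraically closed and that the schemes are locally of finite type are used.
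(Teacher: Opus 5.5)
Your argument is correct and follows the paper's proof. The inclusion $f(X(k))\subset f(X)(k)$ is immediate, and for the reverse inclusion one takes a $k$-rational point in the image, notes that the scheme-theoretic fibre over it is nonempty, and uses that $k$ is algebraically closed to get a $k$-point on that fibre. Your explicit locally-of-finite-type hypothesis is exactly what the paper uses without stating it when it asserts $X_P(k)\neq\emptyset$. Your counterexample is also right once the map $\Spec(k(t))\to\AF^1_k$ is chosen to hit a $k$-rational point, e.g.\ $x\mapsto 0$.
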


\begin{pf}
By definition, $f(X)(k)$ is the intersection $f(X)\cap Y(k)$ of two subsets in $|Y|$. Obviously, $f(X(k))$ is a subset in $f(X)\cap Y(k)$. Conversely, let $P$ be a point in $f(X)\cap Y(k)$. The scheme-theoretic fibre $X_P$ of the morphism $f$ at $P$ is a non-empty closed subscheme in $X$ over $k$. Since $k$ is algebraically closed, $X_P(k)\neq \emptyset $. Therefore, if $Q\in X_P(k)$, then $Q\in X(k)$ and $f(Q)=P$.
\end{pf}

We will use the following terminology. In any topological space $T$, a $c$-closed subset in $T$ is the union of a countable collection of closed subsets in $T$. Accordingly, a $c$-open subset is the complement to a $c$-closed subset in $T$. 

\begin{lemma}
\label{syroezhka1}
Let $k$ be an uncountable algebraically closed field, and let $X$ be a scheme of finite type over $k$. Assume there exists a countable collection of Zariski closed subsets $Z_0,Z_1,Z_2,\dots $ in $X$, such that
  $$
  X(k)=\cup _{i\in \NN }Z_i(k)\; .
  $$
Then there exists $i_0\in \NN $, such that
  $$
  X=Z_{i_0}\; .
  $$
\end{lemma}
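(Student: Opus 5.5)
The plan is to reduce the statement to the case of affine space and then use uncountability of $k$ via an induction on dimension. One remark on the formulation first. As literally stated, the lemma needs $X$ to be irreducible; otherwise take $X$ to be two points and let $Z_0,Z_1$ be the two points. I will therefore prove it for $X$ irreducible. In general the argument gives, for each irreducible component $X'$ of $X$, an index $i_0$ with $X'\subset Z_{i_0}$. Also, "$X=Z_{i_0}$" is to be read as an equality of supports $|Z_{i_0}|=|X|$, since the $Z_i$ are only closed subsets.

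\emph{Reduction to a statement about supports.} Since $X$ is of finite type over the algebraically closed field $k$, the closed points of $X$ are exactly the points of $X(k)$ (Nullstellensatz), and $X$ is a Jacobson scheme. Hence a closed subset $Z\subset |X|$ with $Z(k)=X(k)$ contains all closed points, which are dense in every closed subset. So $Z=|X|$. Consequently it suffices to show that some $Z_i$ contains $X(k)$. Equivalently, assuming that every $Z_i$ is a proper closed subset of the irreducible space $X$, so that $\dim Z_i<\dim X=n$, we must derive a contradiction.

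\emph{Reduction to affine space.} Replace $X$ by a non-empty affine open $U$ and each $Z_i$ by $Z_i\cap U$; these are still proper closed subsets of $U$, and they still cover $U(k)$. By Noether normalization there is a finite surjective morphism $\pi :U\to \AF ^n$. Finite morphisms are closed and preserve dimension, so the sets $W_i=\pi (Z_i\cap U)$ are closed in $\AF ^n$ of dimension $<n$, hence proper. By Lemma \ref{trivia} we have $\pi (U(k))=\AF ^n(k)$ and $\pi (Z_i(k))=W_i(k)$. Thus $\AF ^n(k)=\cup _iW_i(k)$ with every $W_i$ a proper closed subset. It therefore suffices to show that $\AF ^n(k)$ is not a countable union of the $k$-points of proper closed subsets.

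\emph{Induction on $n$.} For $n=1$, each proper closed subset of $\AF ^1$ is finite, so the union is countable, while $\AF ^1(k)=k$ is uncountable. For $n>1$, enlarge each $W_i$ to a hypersurface $V(f_i)$ with $f_i\neq 0$. For $c\in k$, the hyperplane $H_c=\{x_1=c\}$ is contained in $V(f_i)$ only if $x_1-c$ divides $f_i$, and this happens for finitely many $c$. Hence only countably many $c$ are bad for some $i$. Since $k$ is uncountable, we can choose a good $c$. Then the sets $W_i\cap H_c$ are proper closed subsets of $H_c\cong \AF ^{n-1}$ whose $k$-points cover $H_c(k)$, which contradicts the induction hypothesis.

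The only delicate points are bookkeeping rather than mathematics: the passage from $k$-points to supports (the Jacobson/Nullstellensatz step) and the need for irreducibility. The main obstacle I expect is making the reduction to $\AF ^n$ clean, namely checking that images of proper closed subsets under the finite map stay proper. That follows from the preservation of dimension under finite morphisms.
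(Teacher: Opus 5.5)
Your proof is correct and takes the paper's route: restrict to an affine piece, push forward along a Noether normalization to $\AF^n$ using Lemma \ref{trivia}, and induct on $n$ by slicing with uncountably many hyperplanes. Your contrapositive form, choosing $c$ with $H_c\not\subset V(f_i)$ for every $i$, is a cleaner version of the paper's hyperplane-counting step. Your caveat is also right: the statement fails for reducible $X$ (e.g. $V(xy)\subset \AF^2$ covered by the two axes), so irreducibility must be assumed. This costs nothing, since the paper only ever applies the lemma to irreducible schemes, namely irreducible components and translates $P+A_0$.
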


\begin{pf}
The proof can be easily reduced to the case when $X$ is integral and affine. 

Assume first that $X$ is the affine space $\AF ^n$ over $k$, and perform induction by the dimension $n$. If $n=0$, the lemma is obvious. Suppose $n>0$ and consider a hyperplane $H$ in $\AF ^n$. Then
  $$
  H(k)=\cup _{i\in \NN }(H\cap Z_i)(k)\; ,
  $$
and by the inductive hypothesis there exists $i_0$, such that 
  $$
  H(k)=(H\cap Z_{i_0})(k)\; .
  $$
It follows that
  $$
  H\subset Z_{i_0}\; .
  $$
Then either $H=Z_{i_0}$ or $Z_{i_0}$ is the whole space $\AF ^n$. Since the ground field $k$ is uncountable, the set of hyperplanes in $\AF ^n$ is uncountable too. And as the set of closed subschemes $Z_i$ in $\AF ^n$ is countable by assumption, $Z_{i_0}$ must be $\AF ^n$ for at least one index $i_0$. 

If $X$ is not an affine space, by Noether normalization, there exists a finite surjective morphism of schemes 
  $$
  f:X\to \AF ^n\; ,
  $$
where $n$ is the dimension of $X$. By Lemma \ref{trivia},
  $$
  \AF ^n(k)=f(X)(k)=f(X(k))=f(\cup _{i\in \NN }Z_i(k))=
  $$
  $$
  \cup _{i\in \NN }f(Z_i(k))=\cup _{i\in \NN }f(Z_i)(k)\; .
  $$ 
Since a finite morphism is proper, each $f(Z_i)$ is Zariski closed in $\AF ^n$. As we proved the lemma for affine spaces, there exists $i_0$ such that
  $$
  \AF ^n(k)=f(Z_{i_0})(k)\; .
  $$
Then 
  $$
  \dim (Z_{i_0})=n\; ,
  $$
and hence 
  $$
  X=Z_{i_0}\; .
  $$
\end{pf}

Recall that a noetherian scheme is a noetherian topological space, and any subspace of a noetherian topological space is noetherian. Moreover, if $X$ is a noetherian space, for any closed subset $Z$ in $X$, there exists a finite collection of irreducible closed subsets $Z_0,\ldots ,Z_n$ in $X$, such that $X$ is the union $Z_0\cup \ldots \cup Z_n$, and if we drop inclusions $Z_i\subset Z_j$, such a decomposition is unique up to re-indexing of the subsets, see, for example, Proposition 1.5 on page 5 in \cite{Hartshorne}. We need a similar theory for countable unions of closed subsets in a noetherian space $X$.

Namely, a subset $Z$ in $X$ will be called $c$-closed, if there exists a countable collection of closed subsets   
  $$
  Z_0,Z_1,Z_2,\ldots \subset |X|\; ,
  $$
such that 
  \begin{equation}
  \label{irred}
  Z=\cup _{i\in \NN }Z_i\; .   	
  \end{equation}
If each $Z_i$ is irreducible, we will say that (\ref{irred}) is an irreducible decomposition of the $c$-closed subset $Z$ in $X$. If, moreover,
  $$
  Z_i\not \subset Z_j
  $$
whenever 
  $$
  i\neq j\; ,
  $$
we will say that (\ref{irred}) is an irredundant irreducible decomposition of the $c$-closed subset $Z$ in $X$. 


\begin{lemma}
\label{syroezhka2}
Let $k$ be an uncountable algebraically closed field, and let $X$ be a scheme of finite type over $k$. Then any $c$-closed subset in $|X|$ admits an irredundant irreducible decomposition, unique up to the re-indexing of subsets in it.
\end{lemma}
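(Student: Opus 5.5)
Existence is the easy half and is purely topological. Given a $c$-closed subset $Z=\cup _{i\in \NN }Z_i$, each $Z_i$ is closed in the noetherian space $|X|$. By Proposition 1.5 in \cite{Hartshorne} it is a finite union of irreducible closed subsets. Replacing each $Z_i$ by these components, we obtain a countable irreducible decomposition $Z=\cup _j Y_j$.

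To make it irredundant, we keep only those $Y_j$ that are maximal among the $Y$'s with respect to inclusion. We then need every $Y_j$ to lie in some maximal one. This follows from noetherianity: an ascending chain $Y_{j_1}\subsetneq Y_{j_2}\subsetneq \ldots $ of irreducible closed subsets strictly increases dimension. Since $\dim X<\infty $, such a chain has length at most $\dim X+1$, so every $Y_j$ sits inside a maximal $Y_{j'}$. We also discard repeated copies of the same subset. The surviving family is still countable, still covers $Z$, and satisfies $Y_i\not\subset Y_j$ for $i\neq j$.

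Uniqueness is where uncountability of $k$ enters, through Lemma \ref{syroezhka1}. Let $Z=\cup _i Y_i=\cup _j W_j$ be two irredundant irreducible decompositions, and fix one $Y_i$. Give it the reduced scheme structure, so that it is an integral scheme of finite type over $k$. Then $Y_i=\cup _j (Y_i\cap W_j)$, and in particular $Y_i(k)=\cup _j (Y_i\cap W_j)(k)$. By Lemma \ref{syroezhka1} there is an index $j$ with $Y_i=Y_i\cap W_j$, that is, $Y_i\subset W_j$. Symmetrically, $W_j\subset Y_{i'}$ for some $i'$. Then $Y_i\subset Y_{i'}$, and irredundancy forces $i=i'$, hence $Y_i=W_j$. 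Thus $i\mapsto j$ is a well-defined map, injective because the $W_j$ are pairwise distinct. By symmetry it is a bijection, which is exactly uniqueness up to re-indexing.

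The main obstacle is the application of Lemma \ref{syroezhka1}, which is stated for equality of closed subsets of a scheme, whereas here we work with subsets of $|X|$. Its hypothesis concerns only $k$-points, and its conclusion $X=Z_{i_0}$ should be read as an equality of underlying sets. To make this clean I would note that for a scheme of finite type over algebraically closed $k$, closed points are exactly the $k$-points and are dense in every closed subset. Consequently a closed subset is determined by its $k$-points, and the inclusion $Y_i\subset W_j$ obtained on $k$-points lifts to an inclusion of subsets of $|X|$. In existence, the only point requiring care is the finite-dimensionality bound on chains, which is where finite type over $k$ is used.
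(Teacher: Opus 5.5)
Your proof is correct and follows the paper's argument. For existence, you decompose each $Z_i$ into irreducible components and keep the maximal ones; for uniqueness, you apply Lemma \ref{syroezhka1} twice and then use irredundancy to conclude $Y_i=W_j$. Two steps the paper only asserts when it speaks of ``dropping'' redundant components are made explicit in your version: the bound $\dim X<\infty$, which guarantees every component lies in a maximal one, and the removal of duplicates.
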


\begin{pf}
Let $Z$ be a $c$-closed subset in $|X|$, and let
  $$
  Z=\cup _{i\in \NN }Z_i
  $$
be a representation of $Z$ as a union of Zariski closed subsets, which exists by the definition of a $c$-closed subset in a noetherian space.  

For each index $i$, let 
  $$
  Z_i=\cup _jZ_{ij}
  $$
be the decomposition of $Z_i$ in to its irreducible components inside $X$. Dropping $Z_{ij}$, such that $Z_{ij}\subset Z_{i'j'}$ for some $i'$ and $j'$, we obtain an irredundant irreducible decomposition of $Z$. We only need to show uniqueness. 

Suppose 
  $$
  Z=\cup _{i\in \NN }Z_i=\cup _{i\in \NN }Z'_i
  $$
are two irredundant irreducible decompositions of $Z$. For any index $i$,
  $$
  Z_i(k)=\cup _{j\in \NN }(Z_i\cap Z_j)(k)\; ,
  $$
and then
  $$
  Z_i\subset Z'_{j_0}
  $$
for some $j_0$ by Lemma \ref{syroezhka1} applied to $Z_i$ with the induced reduced closed subscheme structure on it. 

Similarly,
  $$
  Z'_{j_0}\subset Z_{i_0}
  $$
for some index $i_0$. Then 
  $$
  Z_i\subset Z_{i_0}\; ,
  $$
and as the decomposition of $Z$ in to the subsets $Z_i$ is irredundant irreducible, 
  $$
  Z_i=Z'_{j_0}\; .
  $$
This finishes the proof of uniqueness. 
\end{pf}

\begin{lemma}
\label{syroezhka3}
Let $k$ be an uncountable algebraically closed field, and let $A$ be an abelian variety over $k$. Assume there exists a $c$-closed subset $B$ in $|A|$, such that $B(k)$ is a subgroup in $A(k)$. Then the irredundant irreducible decomposition of $B$ contains a unique irreducible component $B_0$ passing through $0\in A$, and $B_0$ is an abelian subvariety in the abelian variety $A$. 
	\end{lemma}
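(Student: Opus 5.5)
Take the irredundant irreducible decomposition $B=\cup _{i\in \NN }B_i$ given by Lemma \ref{syroezhka2}. The plan is to show, in this order, that the components through $0$ are closed under the group law, that there is only one of them, and that it is an abelian subvariety. Throughout, statements about $k$-points will be turned into statements about closed subsets by Lemma \ref{syroezhka1}, and translates of the decomposition will be compared using the uniqueness part of Lemma \ref{syroezhka2}.

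\textbf{Step 1: sums of components through $0$ lie in a component.} Let $B_i$ and $B_j$ be components containing $0$ (at least one exists, since $0\in B(k)$). Consider the image $m(B_i\times B_j)$ under the addition morphism $m:A\times A\to A$. This image is irreducible and closed, because $A$ is proper. Its $k$-points are $m(B_i(k)\times B_j(k))$ by Lemma \ref{trivia}, and these lie in $B(k)$ because $B(k)$ is a subgroup. Hence
$$
m(B_i\times B_j)(k)=\cup _{l}(m(B_i\times B_j)\cap B_l)(k)\; .
$$
Lemma \ref{syroezhka1}, applied to the reduced irreducible subscheme $m(B_i\times B_j)$, gives $m(B_i\times B_j)\subset B_l$ for some $l$. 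Since $0\in B_j$, we get $B_i\subset m(B_i\times B_j)\subset B_l$, so irredundancy forces $B_i=B_l$. The symmetric argument gives $B_j\subset B_l$, hence $B_j=B_l$ as well. Therefore $B_i=B_j$, which proves that the component $B_0$ through $0$ is unique. The same computation also shows $m(B_0\times B_0)\subset B_0$.

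\textbf{Step 2: closure under inverses.} Apply the same argument to the image of $B_0$ under the inversion morphism $[-1]$. This is an irreducible closed set containing $0$, and its $k$-points lie in $B(k)$. So it is contained in some $B_l$ which contains $0$, and by Step 1 that component is $B_0$.

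\textbf{Step 3: $B_0$ is an abelian subvariety.} Give $B_0$ its reduced closed subscheme structure. Then $B_0(k)$ is a subgroup of $A(k)$, so $B_0$ is a reduced closed subgroup variety of $A$; the group law restricts because $B_0\times B_0$ is reduced. It is irreducible, hence connected, and it is proper. A connected proper reduced group variety over an algebraically closed field is an abelian variety; smoothness follows from reducedness by homogeneity. So $B_0$ is an abelian subvariety of $A$.

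The main technical point is making Lemma \ref{syroezhka1} apply in Step 1, namely ensuring that the image is an irreducible closed subscheme and that its $k$-points are covered by the countably many $B_l$. Properness of $A$ and Lemma \ref{trivia} handle exactly this. Irredundancy is then what upgrades the containment $B_i\subset B_l$ to the equality $B_i=B_l$.
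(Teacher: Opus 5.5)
Your proof is correct and follows essentially the paper's route: you push the components through $0$ forward under group-law morphisms, use Lemma \ref{syroezhka1} to place the irreducible image inside a single $B_l$, and let irredundancy force equality. The differences are cosmetic. You argue pairwise, which avoids the paper's tacit assumption that only finitely many components $B_0,\ldots ,B_n$ pass through $0$. You also treat addition and inversion separately, where the paper uses the subtraction map $B_0\times B_0\to A$ in one step.
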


\begin{pf}
Let 
  \begin{equation}
  \label{irred2}
  B=\cup _{i\in \NN }B_i
  \end{equation}
be an irredundant irreducible decomposition of the $c$-closed subset $B$, which exists by Lemma \ref{syroezhka2}. Since $B(k)$ is a subgroup in $A(k)$, we have that $0\in B(k)$. Renumbering the components in (\ref{irred2}), let 
  $$
  B_0,\ldots ,B_n
  $$ 
be all the irreducible components in (\ref{irred2}) which contain $0$, considered with the reduced induced closed subscheme structure on them. Consider the summation morphism
  $$
  B_0\times \ldots \times B_n\to A
  $$
induced by the group law in $A$. The product of irreducible schemes is irreducible, and the set-theoretic image of an irreducible set is irreducible. Therefore, the set-theoretic image 
  $$
  B_0+\ldots +B_n
  $$
of the above summation morphism is irreducible. As it is contained in $B$, we must have that
  $$
  B_0+\ldots +B_n=\cup _{i\in \NN }((B_0+\ldots +B_n)\cap B_i)
  $$
in $|A|$, thanks to the decomposition (\ref{irred2}). Then there must exists $i_0$, such that
  $$
  (B_0+\ldots +B_n)\cap B_{i_0}=B_0+\ldots +B_n
  $$
by Lemma \ref{syroezhka2}. It follows that 
  $$
  B_0+\ldots +B_n\subset B_{i_0}\; ,
  $$
whence 
  $$
  B_i\subset B_{i_0}
  $$
for all $i=0,\dots ,n$. Since (\ref{irred2}) is an irredundant irreducible decomposition of $B$, it follows that 
  $$
  n=0
  $$
and $B_0$ is the only irreducible component of $B$ passing through $0$ in $A$. 

If $B_0=\{ 0\} $, then $B_0$ is an abelian subvariety in $A$. Suppose $B_0$ is a set bigger than $\{ 0\} $ and consider $B_0$ with the induced reduced closed subscheme structure on it. Consider the subtraction morphism
   $$
   B_0\times B_0\to A
   $$
sending $(P,Q)$ to $P-Q$, for any two points $P$ and $Q$ in $B_0$. By the same arguments, the set-theoretic image $B_0-B_0$ of the subtraction morphism is irreducible and contained in $B_{i_0}$ for some index $i_0$, and then $B_0$ is a subset in $B_{i_0}$. Since (\ref{irred2}) is an irredundant, we obtain that $i_0=0$, and then
  $$
  B_0-B_0\subset B_0\; .
  $$
It follows that the closed subscheme $B_0$ is an abelian subvariety in $A$.
\end{pf}

In the same way, one can also prove 

\begin{lemma}
\label{syroezhka4}
Let $k$ be an uncountable algebraically closed field, and let $A$ be an abelian variety over $k$. Assume there exists a Zariski closed subset $B$ in $|A|$, such that $B(k)$ is a subgroup in $A(k)$. Then the irredundant irreducible decomposition of $B$ contains a unique irreducible component $B_0$ passing through $0\in A$, and $B_0$ is an abelian subvariety in the abelian variety $A$. 
	\end{lemma}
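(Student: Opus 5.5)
The plan is to treat Lemma \ref{syroezhka4} as the special case of Lemma \ref{syroezhka3} in which the $c$-closed subset happens to be Zariski closed, and then to run the same argument with finite decompositions instead of countable ones. A Zariski closed subset $B$ of $|A|$ is trivially $c$-closed: take $B_0=B$ and $B_i=\emptyset$ for $i>0$. So Lemma \ref{syroezhka3} applies directly and gives the conclusion. To make the proof self-contained, and to avoid relying on uncountability where it is not needed, I would rather rerun the argument with the ordinary noetherian decomposition.

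Let $B=B_0\cup \ldots \cup B_m$ be the decomposition of the closed set $B$ into its irreducible components, with no inclusions among them. This decomposition is unique up to re-indexing by Proposition 1.5 in \cite{Hartshorne}. Since $B(k)$ is a subgroup, $0\in B(k)$. Let $B_0,\ldots ,B_n$ be the components passing through $0$, with their reduced structures. The summation morphism $B_0\times \ldots \times B_n\to A$ has an irreducible source, so its set-theoretic image $B_0+\ldots +B_n$ is irreducible.

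Here is the step that needs care: showing this image lies in $B$. One knows only that $B(k)$ is closed under addition, so the sum of $k$-points lies in $B(k)$. By Lemma \ref{trivia}, the $k$-points of the image are sums of $k$-points of the $B_i$, hence lie in $B(k)$. Since the image is constructible by Chevalley's theorem, its $k$-points are dense in it ($k$ is algebraically closed). As $B$ is closed, the closure $\overline{B_0+\ldots +B_n}$ is contained in $B$. This closure is irreducible, so it lies in a single component $B_{i_0}$. Each $B_i$ with $i\le n$ sits inside the sum, since $0$ lies in every other factor, so $B_i\subset B_{i_0}$. Irredundancy then forces $n=0$. Thus $B_0$ is the unique component through $0$.

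Finally, I would apply the same reasoning to the subtraction morphism $B_0\times B_0\to A$. By the same density argument, the closure of $B_0-B_0$ is irreducible, contained in $B$, contains $0$, and contains $B_0$. It therefore lies in a component through $0$, which must be $B_0$. Hence $B_0-B_0\subset B_0$, so the reduced closed subscheme $B_0$ is closed under subtraction on $k$-points. A connected (indeed irreducible), reduced, closed subgroup of the abelian variety $A$ over algebraically closed $k$ is an abelian subvariety. This holds even in the degenerate case $B_0=\{0\}$. I expect the main obstacle to be purely the passage from "$B(k)$ is a group" to scheme-theoretic containment of the images. That is handled by Lemma \ref{trivia} together with density of $k$-points in constructible sets over an algebraically closed field.
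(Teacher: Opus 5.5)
Your proof is correct and is essentially the paper's argument; the paper proves this lemma only by saying it is ``similar to the proof of Lemma \ref{syroezhka3}'', and that proof runs the same two steps: the summation morphism on the components through $0$ plus irredundancy forces a unique such component $B_0$, and the subtraction morphism then gives $B_0-B_0\subset B_0$. Your use of the finite noetherian decomposition, which needs no uncountability, and your explicit check via Lemma \ref{trivia} and density of $k$-points that these images lie in $B$ simply make precise steps the paper leaves implicit.
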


\begin{pf}
Similar to the proof of Lemma \ref{syroezhka3}.	
\end{pf}

\subsection{Gysin kernels and the variety $A_0$}
\label{A0}

Our next aim is to show that the Gysin kernel $G_0(k)$ is the union of a countable collection of shifts by elements in $A$ of a certain abelian subvariety $A_0$ in $A_1$. We first prove two easy lemmas, and then prove our first result, Theorem \ref{gysstructure}.

\begin{lemma}
\label{Riemann-Roch}
Let $k$ be an algebraically closed field, and let $C$ be a smooth projective curve of genus $g$ over $k$. The map
  $$
  \cl _d:\Sym ^d(C(k))\times \Sym ^d(C(k))\to A_0(C)
  $$
is surjective, if 
  $$
  d\geq g\; .
  $$
\end{lemma}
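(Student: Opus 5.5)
The plan is to reduce the statement to the classical Riemann--Roch surjectivity of the Abel--Jacobi map $\Sym ^g(C)\to \Jac _{C/k}$. Throughout, $A_0(C)$ is the group of degree $0$ cycle classes on $C$, canonically identified with $\Pic ^0(C)=\Jac _{C/k}(k)$. The map $\cl _d$ sends $(A,B)$ to $[A-B]$, which indeed lies in $A_0(C)$ since $\deg (A-B)=0$. So we must show that every degree $0$ class $\xi $ can be written as $[A-B]$ with $A$ and $B$ effective of degree $d$, provided $d\geq g$.

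The first step is to fix a closed point $P_0\in C(k)$, which exists because $k$ is algebraically closed. Given $\xi \in A_0(C)$, represent it by a degree $0$ divisor $D$ and consider the degree $d$ divisor $D+dP_0$. Since $d\geq g$, Riemann--Roch gives
  $$
  \ell (D+dP_0)\geq d-g+1\geq 1\; .
  $$
Hence there is a nonzero rational function $f$ with $\pdiv (f)+D+dP_0\geq 0$. Set
  $$
  A=\pdiv (f)+D+dP_0\qand B=dP_0\; .
  $$
Then $A$ is effective of degree $d$, so it is a point of $\Sym ^d(C(k))$, and $B$ is also a point of $\Sym ^d(C(k))$. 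Moreover, $A-B=D+\pdiv (f)$ is rationally equivalent to $D$, so $\cl _d(A,B)=\xi $.

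The statement carries no real obstacle; the only care needed is bookkeeping. We must identify $\Sym ^d(C(k))$ with effective $0$-cycles of degree $d$ supported on $k$-points, which is legitimate because $k$ is algebraically closed, so every closed point is a $k$-point. We must also apply the Riemann--Roch inequality $\ell (E)\geq \deg E-g+1$ in its form valid over an arbitrary algebraically closed field. Both are standard, and the argument involves no characteristic-dependent input.
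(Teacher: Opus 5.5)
Your proposal is correct and follows essentially the same argument as the paper. Fix a point $P_0$, apply Riemann--Roch to $D+dP_0$ to obtain an effective divisor $A$ of degree $d$ rationally equivalent to $D+dP_0$, and take $B=dP_0$, so that $\cl _d(A,B)=[D]$.
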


\begin{pf}
Let $P$ be a point in $C(k)$, and let $D$ be a divisor of degree $0$ on $C$. By the Riemann-Roch theorem,
  $$
  h^0(\bcO (D+n[P]))\geq 1-g+d\geq 1\; .
  $$
It follows that there exists an effective divisor $D'$ on $C$ rationally equivalent to $D+dP$ on $C$.

Then
  $$
  \deg (D')=\deg (D)+d=d\; ,
  $$
  $$
  \deg (dP)=d\; ,
  $$
and 
  $$
  \cl _d(D',dP)=[D]\; .
  $$
\end{pf}

Let $k$ be an algebraically closed field, and let 
  $$
  f:X\to Y
  $$
be a morphism of schemes of finite type over $k$. For any closed subscheme
  $$
  Z\to Y
  $$
we have the scheme-theoretic inverse image
  $$
  f^{-1}(Z)=X\times _YZ
  $$
of the closed subscheme $Z$, and the set-theoretic pre-image
  $$
  f^{-1}(Z(k))
  $$
of the set of $k$ rational points of the subscheme $Z$. 

\begin{lemma}
In terms above, if $f$ is finite, then
  $$
  f^{-1}(Z)(k)=f^{-1}(Z(k))\; .
  $$
\end{lemma}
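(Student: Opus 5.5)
The plan is to compare both sides point by point. The hypotheses will be used as follows: $k$ is algebraically closed, so $k$-rational points are exactly the closed points and every closed point has residue field $k$. Finiteness of $f$ is needed only to make sure the scheme $X$ is of finite type over $k$, so that these identifications hold on $X$, and in particular on its closed subscheme $f^{-1}(Z)$.

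The first step is the unwinding of definitions. A $k$-point of $f^{-1}(Z)=X\times _YZ$ is, by the universal property of the fibred product, a pair consisting of a morphism $x:\Spec (k)\to X$ and a morphism $z:\Spec (k)\to Z$ over $k$ with $f\circ x=i\circ z$, where $i:Z\to Y$ is the closed immersion. Because $i$ is a monomorphism of schemes, $z$ is uniquely determined by $x$ whenever it exists. It exists exactly when the $k$-point $f(x)$ of $Y$ factors through $Z$.

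The second step is to check that, for a closed immersion $Z\to Y$ and a $k$-rational point $P\in Y(k)$, factoring through $Z$ is the same as lying in the subset $Z(k)\subset Y(k)$. This is immediate, since $Z(k)$ is identified with its image in $Y(k)$ under $i$. Combining the two steps, the map $f^{-1}(Z)(k)\to X(k)$ induced by the projection is injective, and its image is $\{ x\in X(k)\; :\; f(x)\in Z(k)\} =f^{-1}(Z(k))$. This gives the required equality.

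The only point requiring any care is the comparison between $k$-points as morphisms and points of the underlying space $|X|$, since the paper uses the convention $S(k)=S\cap X(k)$. Here I would use that $f^{-1}(Z)\to X$ is a closed immersion, being a base change of one, so that its points form a closed subset of $|X|$ with the same residue fields. Consequently a closed point $x$ of $X$ with residue field $k$ lies in $f^{-1}(Z)$ if and only if $f(x)$, which is again a closed $k$-point because $f$ is finite, lies in $|Z|$. This is the set-theoretic form of the same statement, and it completes the argument.
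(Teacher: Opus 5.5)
Your first two steps are correct, but they prove $f^{-1}(Z)(k)=\{x\in X(k)\;:\;f(x)\in Z(k)\}=f^{-1}(Z(k))\cap X(k)$. That identity holds for an \emph{arbitrary} morphism $f$, which is why finiteness does no real work in your argument. $X$ is already assumed of finite type in the setup, and any $k$-morphism sends $k$-rational points to $k$-rational points, so neither place where you invoke finiteness needs it.

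In the paper, however, $f^{-1}(Z(k))$ is the set-theoretic pre-image in $|X|$ of the subset $Z(k)\subset |Y|$. This matches the convention $S(k)=S\cap X(k)$ for $S\subset |X|$, and the paper calls it explicitly ``the set-theoretic pre-image''. Its elements are a priori arbitrary scheme points $P$ of $X$, possibly non-closed, with $f(P)\in Z(k)$. The whole content of the lemma is the inclusion $f^{-1}(Z(k))\subset X(k)$. Your last paragraph restricts from the outset to closed points with residue field $k$, so this inclusion is never addressed.

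The missing step is exactly where finiteness enters, and it is what the paper's proof does. Take $P\in |X|$ with $Q=f(P)\in Z(k)$. Since $f$ is finite, hence quasi-finite, $\kappa (P)$ is a finite extension of $\kappa (Q)=k$. Because $k$ is algebraically closed, $\kappa (P)=k$, so $P$ is a $k$-rational point. It lies in $|f^{-1}(Z)|=f^{-1}(|Z|)$, i.e.\ $P\in f^{-1}(Z)(k)$.

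Without finiteness the statement is false in this reading. For $f:\AF ^1_k\to \Spec (k)=Y=Z$, the generic point of $\AF ^1_k$ belongs to $f^{-1}(Z(k))$ but not to $f^{-1}(Z)(k)$. Adding the residue-field argument above to your steps would complete the proof.
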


\begin{pf}
Obviously, $f^{-1}(Z)(k)$ is a subset in $f^{-1}(Z(k))$. Let $P$ be a point in $f^{-1}(Z(k))$. Then $P$ is a point of the scheme $f^{-1}(Z)$, and $Q=f(P)$ is a point in $Z(k)$, and hence in $Y(k)$. Since $f$ is finite, it is quasi-finite. Then the residue field extension $\kappa (Q)\subset \kappa (P)$ is finite, see Tag 01TG in \cite{StacksProject}. Since $Q$ is $k$-rational, $\kappa (Q)$ is $k$, and as $k$ is algebraically closed, $\kappa (P)$ is $k$ too.
\end{pf}

Let $k$ be an algebraically closed field, and let $X$ be a smooth projective variety over $k$. Let
  $$
  CH_0(X)
  $$
be the Chow group of $0$-cycles, and let 
  $$
  A_0(X)=\ker (CH_0(X)\stackrel{\deg }{\xrightarrow{\hspace{6mm}}}\ZZ )
  $$
be the kernel of the degree homomorphism, or the Chow group of $0$-cycles of degree $0$ modulo rational equivalence on $X$.

Now again let $X$ is a smooth projective surface over $k$, let $C$ be a smooth projective curve inside $X$, and let
  $$
  \iota :C\to X
  $$
be the corresponding closed immersion over $k$. As above, let
  $$
  A=\Alb _{C/k}
  $$
be the Jacobian variety of the curve $C$, and let
  $$
  B=\Alb _{C/k}
  $$
be the Albanese variety of the surface $X$. The closed immersion $\iota $ induces the homomorphism 
  $$
  \alpha :A\to B
  $$
of abelian varieties over $k$, and the push-forward (Gysin) homomorphism 
  $$
  \iota _*:A_0(C)\to A_0(X)\; .
  $$

Fixing a point on $C$, the classical Abel-Jacobi theorem gives us the standard isomorphism of abelian groups
  $$
  \AJ :A_0(C)\stackrel{\sim }{\lra }A(k)\; ,
  $$
and we obtain the Gysin homomorphism 
  $$
  \iota _*:A(k)\to A_0(X)\; .
  $$

Notice that the homomorphism $\iota _*$ is a part of Gysin structure in terms of Bloch's high Chow groups as a ring cohomology theory, see Section 1.8 in \cite{PaninAEMHT} or Chapter 2 in \cite{PaninHHA}. This is why we call
  $$
  \Gys _0(k)=\ker (\iota _*)
  $$
the {\it Gysin kernel} associated to the smooth pair $(X,C)$.

If 
  $$
  k\subset K
  $$
is a field extension, then
  $$
  A_K=\Jac _{C/k}\times _kK
  $$
is the Jacobian of the curve
  $$
  C_K=C\times _kK\; .
  $$
Define 
  $$
  \Gys _0(K)=\ker ({\iota _K}_*)
  $$
to be the kernel of the obvious push-forward homomorphism
  $$
  {\iota _K}_*:A(K)\to A_0(X_K)\; ,
  $$
where
  $$
  \iota _K:C_K\to X_K
  $$
is the closed immersion over $K$. 

It is important to emphasise that, in contrast to $G_1(k)$, the group $\Gys _0(k)$ is not the set of $k$-rational points in a group subscheme in $A$. However, the following is true:

\begin{theorem}
\label{gysstructure}
Assume that the ground field $k$ is algebraically closed and uncountable. Then there exists an abelian variety $A_0$ in the Jacobian $A$ over $k$, and a countable subset $\Xi $ in $A(k)$, such that $\Gys _0(k)$ is the union of translates 
  $$
  P+A_0(k)
  $$
by all the points
  $$
  P\in \Xi 
  $$
in $A(k)$.
\end{theorem}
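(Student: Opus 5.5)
We will show that $\Gys _0(k)$ is the set of $k$-points of a $c$-closed subset of $|A|$. Then Lemma \ref{syroezhka3} and a countability count finish the argument.

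\emph{Step 1: $\Gys _0(k)$ is $c$-closed.} Fix $d\geq g$, where $g$ is the genus of $C$. By Lemma \ref{Riemann-Roch}, the map
$$
\cl _d^C:\Sym ^{d,d}(C)(k)\to A_0(C)\cong A(k)
$$
is surjective. This map is induced by a regular morphism $\phi :\Sym ^{d,d}(C)\to A$, namely $(D,D')\mapsto \AJ (D-D')$. The closed immersion $\iota $ induces a closed immersion $j:\Sym ^{d,d}(C)\to \Sym ^{d,d}(X)$, and it is compatible with $\cl _d$: we have $\iota _*\circ \cl ^C_d=\cl ^X_d\circ j$ on $k$-points. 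By Corollary \ref{MumfordRoitmanKey2},
$$
(\cl ^X_d)^{-1}(0)=\cup _{u,v}V^{u,v}_d(k) .
$$
Hence
$$
\Gys _0(k)=\phi \bigl(\cup _{u,v}j^{-1}(V^{u,v}_d)(k)\bigr) .
$$
Since $\phi $ is a morphism of projective varieties, it is proper. By Lemma \ref{trivia}, this set equals $\cup _{u,v}\phi (j^{-1}(V^{u,v}_d))(k)$, which is a countable union of $k$-points of Zariski closed subsets $Z_{u,v}\subset A$. Set $B=\cup _{u,v}Z_{u,v}$. One has to check that $B(k)$ is exactly $\Gys _0(k)$ and not larger; this holds because each $Z_{u,v}(k)$ is the image of $k$-points by Lemma \ref{trivia}.

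\emph{Step 2: the abelian variety $A_0$.} The set $B(k)=\Gys _0(k)$ is a subgroup of $A(k)$, being a kernel. By Lemma \ref{syroezhka3}, the irredundant irreducible decomposition $B=\cup _iB_i$ from Lemma \ref{syroezhka2} has a unique component $B_0$ through $0$, and $B_0$ is an abelian subvariety. We set $A_0=B_0$. Since $A_0(k)\subset \Gys _0(k)$, the group $\Gys _0(k)$ is a union of cosets $P+A_0(k)$. Moreover $A_0\subset A_1$: it is connected, contains $0$, and is contained in $G_1$ because $\Gys _0(k)\subset \Gys _1(k)$.

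\emph{Step 3: only countably many cosets.} This is the step requiring care. Take $P\in \Gys _0(k)$. Translation by $-P$ preserves $B(k)$, so $-P+B$ is again a $c$-closed set whose $k$-points form the same group. Our plan is to argue that each coset $P+A_0$ lies in some component $B_i$, and that each irreducible $B_i$ contains only countably many, in fact finitely many, cosets. For the first claim, $(P+A_0)(k)\subset B(k)$, so by Lemma \ref{syroezhka1} applied to the irreducible variety $P+A_0$, it lies in a single $B_i$.

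For the second claim, fix a component $B_i$ and a point $P\in B_i(k)$. Then $-P+B_i$ is irreducible, passes through $0$, and its $k$-points lie in $\Gys _0(k)$. Translates of components are components, by uniqueness of the irredundant decomposition (Lemma \ref{syroezhka2}) applied to $-P+B$, whose $k$-points equal $B(k)$. Uniqueness of the component through $0$ then forces $-P+B_i\subset A_0$. Hence $B_i\subset P+A_0$, so each $B_i$ is contained in a single coset. Choosing one point $P_i\in B_i(k)$ for each $i$ gives a countable set $\Xi $ with $\Gys _0(k)=\cup _{P\in \Xi }(P+A_0(k))$. The main subtlety is the identification of the decomposition of $-P+B$ with the translates $-P+B_i$, which requires that the $c$-closed set be determined by its $k$-points. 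If necessary, we would replace $B$ by the union of the Zariski closures of the sets $Z_{u,v}(k)$, where $Z_{u,v}$ are reduced, to make this determination hold.
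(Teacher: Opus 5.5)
This is essentially the paper's own proof. It uses Riemann--Roch surjectivity of $\cl _d$ for $d\geq g$, the Mumford--Ro\u{\i}tman description of $\cl _d^{-1}(0)$, and properness of $\Sym ^{d,d}(C)\to A$ together with Lemma \ref{trivia} to realise $\Gys _0(k)$ as the $k$-points of a $c$-closed set. It then applies Lemma \ref{syroezhka3} to get $A_0$, and Lemma \ref{syroezhka1} to match irreducible components with cosets of $A_0$. The paper shows each coset $P+A_0$ is a component, while you show each component lies in a coset, which amounts to the same count.

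The subtlety you flag in Step 3 is not a real obstacle, and your proposed remedy of passing to Zariski closures of the sets $Z_{u,v}(k)$ changes nothing, since those closures are already the $Z_{u,v}$. Instead, apply Lemma \ref{syroezhka1} directly to the irreducible variety $-P+B_i$, whose $k$-points lie in $\Gys _0(k)=\cup _jB_j(k)$. This gives $-P+B_i\subset B_j$ for some $j$, and since $0\in B_j$, uniqueness of the component through $0$ forces $B_j=A_0$. This is exactly the paper's step $-P+A_i(k)\subset A_j(k)$, and it avoids having to identify $-P+B$ with $B$ as subsets of $|A|$.
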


\begin{pf}
Let $g=g(C)$ be the genus of the curve $C$, and assume $d\geq g$. By Lemma \ref{Riemann-Roch}, the map 
  $$
  \cl _d:\Sym ^{d,d}(C(k))\to A_0(C)
  $$ 
is surjective. 

Let also
  $$
  \Sym ^{d,d}(\iota ):\Sym ^{d,d}(C)\to \Sym ^{d,d}(X)\; .
  $$
be the morphism of schemes induced by the closed immersion $\iota $ on symmetric powers, and for short of notation let 
  $$
  i:\Sym ^{d,d}(C(k))\to \Sym ^{d,d}(X(k))
  $$
be the map on $k$-rational points induced by the morphism $\Sym ^{d,d}(\iota )$.

The obvious commutative diagram
  $$
  \diagram
  \Sym ^{d,d}(C(k)) \ar[dd]^-{\cl _d} \ar[rr]^-{i} & & \Sym ^{d,d}(X(k)) 
  \ar[dd]^-{\cl _d } \\ \\
  A_0(C) \ar[dd]^-{\AJ } \ar[rr]^-{\iota _*} & & A_0(X) \ar[dd]^-{\id } \\ \\
  A(k) \ar[rr]^-{\iota _*} & & A_0(X)
  \enddiagram
  $$ 
tells then that
  $$
  \Gys _0(k)=\AJ (\cl _d(i^{-1}(\cl ^{-1}_d(0))))
  $$
 
By Corollary \ref{MumfordRoitmanKey2}, there exist closed subschemes 
  $$
  V_d^{u,v}\to \Sym ^{d,d}(X)
  $$
over $k$, for each pair of natural numbers $(u,v)$, such that the pre-image of $0$ under the right map $\cl _d$ is given by the formula
  $$
  \cl _d^{-1}(0)=\cup _{u,v\in \NN }V_d^{u,v}(k)\; .
  $$
Let
  $$
  Z_d^{u,v}\to \Sym ^{d,d}(C)
  $$
be the scheme-theoretic inverse image of the closed subscheme $V_d^{u,v}$ under the morphism $\Sym ^{d,d}(\iota )$.

The left vertical composition $\AJ \circ \cl _d$ in our commutative diagram is induced, on $k$-rational points, by the corresponding regular morphism of schemes
  $$
  \Sym ^{d,d}(C)\to A
  $$
over $k$. Since this morphism is proper, as a morphism of projective schemes, the scheme-theoretic images 
  $$
  A_d^{u,v}\to A
  $$
of the closed subschemes $Z_d^{u,v}$ under this morphism are the set-theoretic images with the induced reduced closed subscheme structure on them, and
  $$
  \Gys _0(k)=\cup _{u,v\in \NN }A_d^{u,v}(k)
  $$
in $A(k)$. 

Now, since the union 
  $$
  \cup _{u,v\in \NN }|A_d^{u,v}|
  $$
is a $c$-closed subset in $A$ whose set of $k$-rational points is $\Gys _0(k)$, by Lemma \ref{syroezhka2}, it admits a unique (up to reindexing) irredundant irreducible decomposition
  $$
  \cup _{u,v\in \NN }|A_d^{u,v}|=\cup _{i\in \NN }A_i\; ,
  $$
and then
  $$
  \Gys _0(k)=\cup _{i\in \NN }A_i(k)\; .
  $$

By Lemma \ref{syroezhka3}, this irredundant irreducible decomposition contains a unique irreducible component $A_0$ passing through $0\in A$, and $A_0$ is an abelian subvariety in the abelian variety $A$. 

As $0\in A$ and $A_0(k)$ is the Gysin kernel $\Gys _0(k)$ is a subgroup in $A(k)$,
  $$
  \Gys _0(k)=\cup _{P\in \Gys _0(k)}(P+A_0(k))\; .
  $$
Let
  $$
  \Xi \subset \Gys _0(k)
  $$
be the subset of points $P\in \Gys _0(k)$, such that $P+A_0(k)$ is not contained in $Q+A_0(k)$ for any other point $Q\in \Gys _0(k)$. Then 
  $$
  \Gys _0(k)=\cup _{P\in \Xi }(P+A_0(k))\; ,
  $$
and we only need to show that the set $\Xi $ countable. 

Since
  $$
  \cup _{i\in \NN }A_i(k)=\cup _{P\in \Xi }(P+A_0(k))\; ,
  $$
we have that
  $$
  P+A_0(k)=\cup _{i\in \NN }(A_i(k)\cap (P+A_0(k)))=
  \cup _{i\in \NN }((A_i\cap (P+A_0))(k))
  $$
for each point $P\in \Xi $.

As $A_0$ is an abelian subvariety in $A$, the scheme-theoretic shift $P+A_0$, i.e. the scheme-theoretic image of $A_0$ under the addition of $P$ morphism from $A$ to $A$, is a scheme of finite type over $k$, and 
  $$
  P+A_0(k)=(P+A_0)(k)\; .
  $$
We then obtain that 
  $$
  (P+A_0)(k)=\cup _{i\in \NN }((A_i\cap (P+A_0))(k))\; .
  $$
Applying Lemma \ref{syroezhka1}, we get  
  $$
  P+A_0=A_i\cap (P+A_0)
  $$
for some index $i$. It follows that
  $$
  P+A_0(k)\subset A_i(k)\; ,
  $$
or, equivalently,
  $$
  A_0(k)\subset -P+A_i(k)\; .
  $$ 
  
Similarly,
  $$
  -P+A_i(k)\subset A_j(k)
  $$
form some index $j$. 

Then
  $$
  A_0(k)\subset A_j(k)\; ,
  $$
and since our decomposition is irredundant,
  $$
  A_0=A_j\; .
  $$

Thus,
  $$
  A_0(k)\subset -P+A_i(k)\subset A_0(k)\; ,
  $$
and, therefore,
  $$
  P+A_0(k)=A_i(k)\; ,
  $$
for any point $P\in \Xi $. 

It follows that $\Xi $ is countable.
\end{pf} 

The following lemma is obvious.

\begin{lemma}
\label{supereasy}
Assuming the ground field $k$ is algebraically closed and uncountable, the abelian variety $A_0$ is contained in the abelian variety $A_1$.
\end{lemma}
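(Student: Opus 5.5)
The inclusion will follow from comparing the two kernels on $k$-points and then passing to Zariski closures. Since the Albanese homomorphism $\alb _X$ kills $\iota _*$ of every element of $\Gys _0(k)$, we have $\Gys _0(k)\subset \Gys _1(k)=G_1(k)$, where $G_1=\ker (\alpha )$ is the closed group subscheme of $A$ from Section \ref{A1}. By Theorem \ref{gysstructure}, $A_0$ is an irreducible component of the irredundant decomposition of the $c$-closed set representing $\Gys _0(k)$, and $A_0(k)\subset \Gys _0(k)$. Hence
  $$
  A_0(k)\subset G_1(k)\; .
  $$

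Next we upgrade this inclusion of $k$-points to an inclusion of closed subsets. Because $k$ is algebraically closed and $A_0$ is a reduced scheme of finite type over $k$, the set $A_0(k)$ is dense in $|A_0|$. Consequently $A_0\cap G_1$ is a closed subset of $A_0$ that contains all of its $k$-points, which forces $|A_0|\subset |G_1|$. This step could also be obtained from Lemma \ref{syroezhka1} applied to the single closed subset $A_0\cap G_1$ of $A_0$. Since $A_0$ is reduced, the inclusion of supports gives a closed immersion $A_0\hookrightarrow G_1$, or at least into $(G_1)_{\rm red}$; this is all we need, because $A_1$ is defined as the reduced connected component through $0$.

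Finally we use connectedness. The variety $A_0$ is irreducible, hence connected, and contains $0$, so it lies in the connected component of $G_1$ through $0$. That component is $A_1$, so $A_0\subset A_1$. The only point requiring care is the density of $k$-rational points in a variety over an algebraically closed field, which is the Nullstellensatz. Uncountability of $k$ enters only because it is needed for $A_0$ to exist, via Theorem \ref{gysstructure}.
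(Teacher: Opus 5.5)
Your proof is correct and follows essentially the same route as the paper's, which simply notes that $A_0$ is connected, contains $0$, and so lies in the connected component $A_1$ of $G_1$. The only difference is that you spell out a step the paper leaves implicit: the inclusion $A_0(k)\subset \Gys _0(k)\subset G_1(k)$ is upgraded to $|A_0|\subset |G_1|$ using density of $k$-rational points, and you note that $A_0$ then lands in $(G_1)_{\rm red}$.
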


\begin{pf}
Since $A_0$ is connected, it is a subvariety in a connected component of the group scheme $G_1$. And since $0\in A_0$, it follows that $A_0$ is in $A_1$.
\end{pf}

Notice also that, applying the general Lemma \ref{dualities} and Lemma \ref{zetainj} in Section \ref{A1} to the abelian subvariety $A_0$ in $A$, we can define an injective homomorphism $\zeta _0$ as the composition
  $$
  \diagram
  H^1(A_0,\QQ _l) \ar[dd]_-{\sim } \ar[rr]^-{\zeta _0} & & 
  H^1(A,\QQ _l) \\ \\
  H_1(A_0,\QQ _l(-1)) \ar[rr]^-{{i_0}_*} & & H_1(A,\QQ _l(-1)) 
  \ar[uu]^-{\sim } 
  \enddiagram
  $$
similar to the homomorphism $\zeta _1$. 

And, applying the same lemmas to the abelian subvariety $A_0$ in $A_1$, we also get an injective homomorphism $\zeta _{01}$ as the composition
  $$
  \diagram
  H^1(A_0,\QQ _l) \ar[dd]_-{\sim } \ar[rr]^-{\zeta _{01}} & & 
  H^1(A_1,\QQ _l) \\ \\
  H_1(A_0,\QQ _l(-1)) \ar[rr]^-{{i_{01}}_*} & & H_1(A,\QQ _l(-1)) 
  \ar[uu]^-{\sim } 
  \enddiagram
  $$
where $i_{01}$ is the closed immersion of $A_0$ in to $A_1$ over $k$. 

Clearly,
  $$
  \zeta _1\circ \zeta _{01}=\zeta _0\; .
  $$

\subsection{Very general versus geometric generic}
\label{gengeneric}

Let $k$ be an uncountable algebraically closed field, and let $S$ be an integral algebraic scheme over $k$. A $c$-closed subset in $S$ is a union of a countable collection of Zariski closed irreducible subsets in $S$. A $c$-open subset in $S$ is the complement to a $c$-closed subset in $S$. As usual, we will be saying that a property of points in $S$ holds for a very general point on $S$ if there exists a $c$-open subset $U$ in $S$, such that this property holds for each closed point in $U$.

Choose a countable algebraically closed subfield $k_0$ in $k$, such that there exists an integral model $S_0$ over $k_0$ with 
  $$
  S=S_0\times _{\Spec (k_0)}\Spec (k)\; .
  $$
For any closed subscheme $Z$ in $S_0$, let 
  $$
  i_Z:Z\subset S_0
  $$
be the corresponding closed immersion over $k$. Since the field $k_0$ is countable and the ideal of $Z$ is finitely generated, there exists only countably many closed subschemes $Z$ in $S_0$. For each $Z$ let 
  $$
  U_Z=S\smallsetminus \im (i_Z)
  $$
be the complement to the image of the closed immersion $i_Z$ in $S$, let 
  $$
  Z_k=Z\times _{\Spec (k_0)}\Spec (k)
  $$ 
and let 
  $$
  (U_Z)_k=U_Z\times _{\Spec (k_0)}\Spec (k)\; .
  $$
If 
  $$
  (i_Z)_k:Z_k\to S
  $$ 
is the scalar extension of $i_Z$ from $k_0$ to $k$, then
  $$
  (U_Z)_k=S\smallsetminus \im ((i_Z)_k)\; .
  $$
Let, furthermore,
  $$
  U=S\smallsetminus \cup _Z\im (({i_Z})_k)=
  \cap _Z(U_Z)_{k }\; ,
  $$
where the union is taken over closed subschemes $Z$, such that 
  $$
  \im (i_Z)\neq S_0\; .
  $$
The set $U$ is $c$-open in $S$.

For our purposes, without loss of generality, we may assume that $S_0$ is affine. Let then 
  $$
  k_0[S_0]=\Gamma (S_0,\bcO _{S_0})
  $$
be the ring of regular functions on $S_0$, and let
  $$
  k_0(S_0)=k_0[S_0]_{(0)}
  $$
be the function field on the affine scheme $S_0$. Similarly, let 
  $$
  k[S]
  $$
and 
  $$
  k(S)=k[S]_{(0)}
  $$
be the ring of regular functions and the field of rational functions on the scheme $S$. 
  
\begin{lemma}
\label{iso}
For any $k$-rational point $P$ in $U$, there exists a (non-canonical) isomorphism 
  $$
  e_P:\overline {k(S)}\stackrel{\sim }{\lra }k
  $$
over $k_0$, such that
  $$
  e_P(f)=f(P)
  $$
for any function $f$ in the ring $k_0[S_0]$.
\end{lemma}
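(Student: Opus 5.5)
The natural approach is to construct $e_P$ in two stages. First we define it on the subring $k_0[S_0]$ by evaluation at $P$. Then we extend it field-theoretically, using that $k$ is algebraically closed of uncountable transcendence degree over the countable field $k_0$.

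Since $P\in S(k)$ and $S=S_0\times _{k_0}k$, the point $P$ gives a $k_0$-algebra homomorphism
$$
\mathrm{ev}_P:k_0[S_0]\to k\; ,\quad f\mapsto f(P)\; .
$$
We first check that $\mathrm{ev}_P$ is injective. If $f\neq 0$ in $k_0[S_0]$ satisfies $f(P)=0$, let $Z$ be the closed subscheme $V(f)$ of $S_0$. Since $S_0$ is integral and $f\neq 0$, we have $\im (i_Z)\neq S_0$. But $P\in \im ((i_Z)_k)$, which contradicts $P\in U$. So $\mathrm{ev}_P$ is injective. It therefore extends to a $k_0$-embedding
$$
k_0(S_0)\hookrightarrow k\; ,
$$
and then to an embedding $\overline {k_0(S_0)}\hookrightarrow k$, because $k$ is algebraically closed.

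Next, $k(S)$ is the fraction field of $k_0[S_0]\otimes _{k_0}k$. This is a regular extension situation: $k_0$ is algebraically closed, so $k_0(S_0)$ is regular over $k_0$. Hence $k(S)$ is the compositum of $k_0(S_0)$ and $k$, which are linearly disjoint over $k_0$. So $\overline {k(S)}$ is an algebraically closed field containing $k_0(S_0)$. Let $n=\dim S_0=\trdeg (k_0(S_0)/k_0)$.

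Now compare transcendence degrees over $k_0$. We have
$$
\trdeg (\overline {k(S)}/k_0)=\trdeg (k/k_0)+n\; ,
$$
and $\trdeg (k/k_0)$ is uncountable, equal to $\card (k)$, because $k_0$ is countable and $k$ is uncountable. Adding the finite number $n$ does not change this infinite cardinal, so $\trdeg (\overline {k(S)}/k_0)=\trdeg (k/k_0)$.

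We then extend the embedding $\overline {k_0(S_0)}\hookrightarrow k$ to an isomorphism. The image of $\overline {k_0(S_0)}$ in $k$ has transcendence degree $n$ over $k_0$. Choose a transcendence basis $\Lambda $ of $\overline {k(S)}$ over $\overline {k_0(S_0)}$, and a transcendence basis $\Lambda '$ of $k$ over the image of $\overline {k_0(S_0)}$. Both have cardinality $\trdeg (k/k_0)$, since this is infinite and exceeds $n$. Any bijection $\Lambda \to \Lambda '$ extends the embedding to an isomorphism of purely transcendental extensions. This isomorphism then extends to an isomorphism of their algebraic closures, which are $\overline {k(S)}$ and $k$, by uniqueness of algebraic closure. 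The resulting $e_P$ restricts to $\mathrm{ev}_P$ on $k_0[S_0]$, as required.

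The main obstacle I expect is the injectivity step: verifying that membership of $P$ in the $c$-open set $U$ really excludes every nonzero $f$ with $f(P)=0$. This requires care in identifying $V(f)$ as one of the countably many proper closed subschemes $Z$ used to define $U$, and in checking that $P$ lies in its base change. The rest is standard cardinal arithmetic with transcendence bases, with the one point of care being that $\overline {k(S)}$ has the same transcendence degree over $k_0$ as $k$.
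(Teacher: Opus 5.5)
Your proof is correct and follows essentially the paper's route. The paper likewise uses $P\in U$ to see that $P$ lies over the generic point of $S_0$, so that evaluation extends to an embedding $\epsilon _P:k_0(S_0)\to k$; it then absorbs the $\dim S_0$ extra transcendentals using that $\trdeg (k/k_0)$ is infinite. The difference is only in packaging: the paper does this concretely via a Noether normalization $x_1,\dots ,x_d$, a transcendence basis $B$ of $k/k_0$ containing $b_i=x_i(P)$, and a bijection $B\to B\smallsetminus \{b_1,\dots ,b_d\}$, which is just a particular choice of your $\Lambda $ and $\Lambda '$. Your abstract matching of transcendence bases over $\overline{k_0(S_0)}$ reaches the same conclusion while avoiding the tensor-product description of $k(S)$.
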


\begin{pf}
Let 
  $$
  f_P:\Spec (k)\to S
  $$
be the morphism corresponding to the point $P$ over $k$. The image of $f_P(P)$ under the projection 
  $$
  \pi :S\to S_0
  $$ 
belongs to the set $U_Z$, for each closed subscheme $Z$ in $S_0$, such that $\im (i_Z)\neq S_0$. Then this image is noting but the generic point 
  $$
  \eta _0=\Spec (k_0(S_0))
  $$ 
of the scheme $S_0$. 

In other words, there exists a morphism 
  $$
  h_P:\Spec (k)\to \Spec (k_0(S_0))=\eta _0\; ,
  $$
such that 
  $$
  \pi \circ f_P=g_0\circ h_P\; ,
  $$
where $g_0$ is a morphism from the generic point $\eta _0$ of $S_0$ to $S_0$.

In terms of commutative rings, if 
  $$
  \ev _P:k[S]\to k
  $$ is the evaluation at $P$, i.e. the morphism inducing $f_P$ on spectra, there exists a homomorphism of fields $\epsilon _P$ making the diagram
  \begin{equation}
  \label{ulitka}
  \diagram
  k[S] \ar[rr]^-{\ev _P} & & k\\ \\
  k_0[S_0] \ar[uu]_-{} \ar[rr]^-{} & & k_0(S_0)
  \ar[uu]_-{\epsilon _P}
  \enddiagram
  \end{equation}
commutative, where $k$ in the top right corner is considered as the residue field of the scheme $S$ at $P$. Since the left vertical homomorphism is injective, the set 
  $$
  k _0[S_0]\smallsetminus \{ 0\} 
  $$ 
is a multiplicative system in the ring $k[S]$, and it is not hard to see that 
  $$
  (k_0[S_0]\smallsetminus \{ 0\} )^{-1}k[S]=k[S]\otimes _{k_0[S_0]}k_0(S_0)\; .
  $$ 
This is why there exists a unique universal homomorphism of rings 
  $$
  (k_0[S_0]\smallsetminus \{ 0\} )^{-1}k[S]\to k
  $$ 
whose restriction to the ring $k[S]$ is $\ev _P$ and the restriction to the field $k_0(S_0)$ is $\epsilon _P$. 

We want to construct an embedding of $k(S)$ in to $k$ whose restriction to $k_0(S_0)$ would be $\epsilon _P$. Such an embedding will not be over the ring $(k_0[S_0]\smallsetminus \{ 0\} )^{-1}k[S]$. 

So let $d$ be the dimension of $S_0$. By the Noether normalization lemma, there exist $d$ algebraically independent elements, $x_1,\dots ,x_d$ in $k_0[S_0]$, such that the latter ring is integral and finitely generated, and hence finite over the ring $k _0[x_1,\dots ,x_d]$, and $k_0(S_0)$ is algebraic over the field of fractions $k_0(x_1,\dots ,x_d)$, see $\S 1$ in Chapter II of Lang's book \cite{Lang}. Then the ring $k[S]$ is integral over the ring $k[x_1,\dots ,x_d]$, and the field $k(S)$ is algebraic over the field $k(x_1,\dots ,x_d)$. 

Let 
  $$
  b_i=\ev _P(x_i)
  $$ 
for all indices $1,\dots ,d$. Since $P\in U$, the elements $b_1,\dots ,b_d$ are algebraically independent over $k_0$. Extend the set $\{ b_1,\dots ,b_d\} $ to a transcendental basis $B$ of $k$ over $k_0$, 
  $$
  \{ b_1,\dots ,b_d\} \subset B\; ,
  $$
to have that 
  $$
  k=k_0(B)
  $$
over $k$. 

Since $B$ is of infinite cardinality, so is the set $B\smallsetminus \{ b_1,\dots ,b_d\} $. Choose and fix a bijection
  $$
  B\stackrel{\sim }{\to }
  B\smallsetminus \{ b_1,\dots ,b_d\} \; .
  $$
It gives a field embedding
  $$
  k=k_0(B)\simeq
  k_0(B\smallsetminus \{ b_1,\dots ,b_d\} )\subset
  k_0(B)
  $$
over $k_0$, such that the set $\{ b_1,\dots ,b_d\} $ is algebraically independent over its image. The latter embedding induces a new field embedding
  $$
  k (x_1,\dots ,x_d)\to k
  $$
sending $x_i$ to $b_i$ for each $i$. The restriction of this field embedding on $k_0(x_1,\dots ,x_d)$ is the restriction of $\epsilon _P$ on the same field. Since $k(S)$ is the tensor product of $k(x_1,\dots ,x_d)$ and $k_0(S_0)$ over $k_0(x_1,\dots ,x_d)$, we get a uniquely defined embedding
  $$
  k(S)\to k\; ,
  $$
which can be extended to an isomorphism
  $$
  e_P:\overline {k(S)}\stackrel{\sim }{\lra }k\; .
  $$
As the square (\ref{ulitka}) is commutative, 
  $$
  e_P(f)=f(P)
  $$ 
for each $f$ in $k_0[S_0]$.
\end{pf}

\begin{remark}
{\rm Each isomorphism $e_P$ is non-canonical, as it depends on the choice of the transcendental basis $B$ containing the quantities $b_1,\dots ,b_d$.}
\end{remark}

Let 
  $$
  \eta _0=\Spec (k_0(S_0))
  $$ 
be the generic point of the scheme $S_0$, 
  $$
  \eta =\Spec (k(S))
  $$ 
the generic point of the scheme $S$, and let 
  $$
  \bar \eta =\Spec (\overline {k(S)})
  $$ 
be the geometric generic point of $S$. The non-canonical isomorphisms 
  $$
  e_P:\overline {k(S)}\stackrel{\sim }{\lra }k
  $$
over $k_0$, appearing in Lemma \ref{iso}, induce the corresponding non-canonical isomorphisms of schemes
  $$
  \varkappa _P:\Spec (k)\stackrel{\sim }{\lra }\bar \eta 
  $$
over $k_0$.

Let now $\bcX$ be a scheme over $S$ having a model $\bcX _0$ over $k_0$. In other words, one has a cartesian square
  $$
  \diagram
  \bcX \ar[dd]_-{} \ar[rr]^-{f} & & S \ar[dd]^-{} \\ \\
  \bcX _0 \ar[rr]^-{f_0} & & S_0
  \enddiagram
  $$
in the category of schemes over $k_0$. Let $\bcX _{0,\eta _0}$ be the generic fibre of the morphism $f_0$, let $\bcX _{\eta }$ and $\bcX _{\bar \eta }$ be, respectively, the generic and geometric generic fibre of the morphism $f$.

Pulling back the scheme-theoretic isomorphism $\varkappa _P$ to the fibres of the family $f$, we obtain the cartesian square
  $$
  \diagram
  \bcX _P \ar[dd]_-{\varkappa _P} \ar[rr]^-{} & &
  \Spec (k) \ar[dd]^-{\varkappa _P} \\ \\
  \bcX _{\bar \eta } \ar[rr]^-{} & & \bar \eta
  \enddiagram
  $$

Notice that since $\varkappa _P$ on the right is an isomorphism of schemes over $\eta _0$, the morphism $\varkappa _P$ on the left is an isomorphism of schemes over $\bcX _{0,\eta _0}$.

If 
  $$
  k_0\subset L\subset k
  $$ 
is a field subextension of $k/k_0$, the morphsim $\bcX \to \bcX _0$ factorizes through the scheme 
  $$
  {\bcX _0}_L=\bcX _0\times _{\Spec (k_0)}\Spec (L)\; .
  $$
Composing the embedding of the fibre $\bcX _P$ in to the total scheme $\bcX $ with the morphism $\bcX \to {\bcX _0}_L$ we can consider $\bcX _P$ as a scheme over ${\bcX _0}_L$.

Now, let $P$ and $P'$ be two $k$-rational points in the $c$-open subset $U$ in $S$, let 
  $$
  \sigma _{PP'}=e_{P'}\circ e_P^{-1}
  $$ 
be the automorphism of the field $k$ induced by the non-canoniocal isomorphisms from Lemma \ref{iso}, and let 
  $$
  \varkappa _{PP'}=\varkappa _{P'}^{-1}\circ \varkappa _P:\bcX _P\lra \bcX _{P'}
  $$ 
be the induced isomorphism of the fibres as schemes over $\Spec (k^{\sigma _{PP'}})$. 

In these terms, 
  $$
  (\bcX _P)_{\sigma _{PP'}}=\bcX _{P'}\; ,
  $$
the isomorphism 
  $$
  w_{\sigma _{PP'}}:\bcX _{P'}\stackrel{\sim }{\to }\bcX _P
  $$ 
is over the scheme 
  $$
  \bcX _0\times _{\Spec (k_0)}\Spec (k^{\sigma _{PP'}})\; ,
  $$
and 
  $$
  w_{\sigma _{PP'}}=\varkappa _{P'P}\; .
  $$

To see that we just need to use Lemma \ref{iso} and pull-back the scheme-theoretic isomorphisms between points on $S$ to isomorphisms between the corresponding fibres of the morphism $f$.

The assumption that $S$ is affine is not essential. One can cover $S$ by open affine subschemes, construct the system of isomorphisms $\varkappa $ in each affine chart and then construct ``transition isomorphisms" between very general fibres in a smooth family over an arbitrary integral base $S$ of finite type over $k$.

And all the same applies to morphisms of schemes over $S$, of course. Indeed let
   $$
   \xymatrix{
   \bcY \ar[rd]_-{} \ar[rr]^-{\iota } & & \bcX \ar[ld]^-{} \\
   & S & }
   $$
be a morphism of schemes over $S$, with a model 
   $$
   \xymatrix{
   \bcY_0 \ar[rd]_-{} \ar[rr]^-{\iota _0} & & \bcX _0\ar[ld]^-{} \\
   & S_0 & }
   $$
over $k_0$. 

Then, for any $k$-point $P$ in $U$, the obvious diagram
  \begin{equation}
  \label{ulitka3}
  \diagram
  \bcY _P \ar[dd]_-{\varkappa _P} \ar[rr]^-{\iota _P} & &
  \bcX _P \ar[dd]^-{\varkappa _P} \\ \\
  \bcY _{\bar \eta } \ar[rr]^-{\iota _{\bar \eta }}
  & & \bcX _{\bar \eta }
  \enddiagram
  \end{equation}
commutes, and the isomorphisms $\varkappa _{PP'}$ commute with the morphisms $\iota _P$ and $\iota _{P'}$, for any two closed $k$-points $P$ and $P'$ in $U$. 

The following lemma is a consequence of the Suslin-Voevodsky's representability Theorem \ref{SV-theorem}.

\begin{lemma}
\label{ratkappa}
The scheme-theoretic isomorphisms $\varkappa _P$ preserve the algebraic and rational equivalence of algebraic cycles.
\end{lemma}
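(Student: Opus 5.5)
The plan is to reduce the statement to Corollary \ref{fieldtwist}, which already says that a base change along an abstract field isomorphism preserves rational and algebraic equivalence of $0$-cycles. The first step is to check that $\varkappa _P:\bcX _P\to \bcX _{\bar \eta }$ is exactly such a base change. By construction $\varkappa _P$ is the pullback of the isomorphism of spectra $\Spec (k)\to \bar \eta $ induced by $e_P:\overline {k(S)}\stackrel{\sim }{\lra }k$. Hence we get a cartesian square
\[
\begin{array}{ccc}
\bcX _P & \lra & \Spec (k)\\
\downarrow & & \downarrow \\
\bcX _{\bar \eta } & \lra & \Spec (\overline {k(S)})
\end{array}
\]
This is precisely the setting of Corollary \ref{fieldtwist}, with $\sigma =e_P^{-1}$ (or $e_P$, depending on orientation), $X=\bcX _{\bar \eta }$ and $X'=\bcX _P$. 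It follows that the induced bijection $Z_0(\bcX _P)\stackrel{\sim }{\lra }Z_0(\bcX _{\bar \eta })$ and its inverse preserve rational and algebraic equivalence of $0$-cycles.

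Second, the statement concerns algebraic cycles in general, not only $0$-cycles, so the argument should be repeated in arbitrary dimension rather than quoted. The field isomorphism $\sigma $ gives an equivalence of categories of schemes over $k$ and over $\overline {k(S)}$, compatible with fibre products, properness, flatness, normalization and push-forward of cycles. Rational equivalence of $r$-cycles on $\bcX _P$ is generated by cycles $\pr _*(W\cdot (\bcX _P\times \{0\})-W\cdot (\bcX _P\times \{\infty \}))$ for integral $W\subset \bcX _P\times \PR ^1$ flat over $\PR ^1$. Algebraic equivalence is generated in the same way, with $\PR ^1$ replaced by a smooth connected curve $T$ and two $k$-points of $T$. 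Transporting $W$, $T$ and the points through the equivalence produces the corresponding data over $\overline {k(S)}$, and $\PR ^1$ goes to $\PR ^1$ since it is defined over the prime field. Hence $\varkappa _P$ carries generators of each equivalence to generators, and the same holds for the inverse. For $0$-cycles this is exactly the Suslin--Voevodsky description of Theorem \ref{SV-theorem} and Corollary \ref{rateqP1path}: morphisms $\PR ^1\to \Sym ^{d+u}$ are transported to morphisms $\PR ^1\to \Sym ^{d+u}$, because symmetric powers commute with the base change $\sigma $.

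Third, by the commutativity of (\ref{ulitka3}), the same transport is compatible with push-forwards such as $\iota _P$ and $\iota _{\bar \eta }$. This is what will be used later to identify Gysin kernels at very general points with the geometric generic one.

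The main obstacle is the bookkeeping in the first step. The isomorphism $\varkappa _P$ is an isomorphism of schemes that is \emph{not} over a fixed base field: it lies over $\bcX _{0,\eta _0}$ and not over $k$. One therefore has to verify carefully that the square above is cartesian in the sense required by Corollary \ref{fieldtwist}, i.e. that $\bcX _P\cong \bcX _{\bar \eta }\times _{\overline {k(S)},e_P^{-1}}k$. I would get this from the definition of $\bcX _P$ as a fibre, together with $\bcX _{\bar \eta }=\bcX _{0,\eta _0}\times \bar \eta $, using that $e_P$ restricts to $\epsilon _P$ on $k_0(S_0)$ (Lemma \ref{iso}).
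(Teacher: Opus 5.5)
Your proposal is correct and is essentially the paper's argument. The paper's entire proof is to apply Corollary \ref{fieldtwist} with $k'=\overline{k(S)}$ and $\sigma=e_P$ (and its inverse), taking the cartesian square over $\varkappa _P$ as given by the construction; this is exactly your first step. Your explicit check of that square via $e_P|_{k_0(S_0)}=\epsilon _P$ is a sound addition. So is your transport-of-structure argument for cycles of arbitrary dimension, which genuinely covers the ``algebraic cycles'' wording of the lemma, whereas Corollary \ref{fieldtwist} only concerns $0$-cycles.
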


\begin{pf}
Apply Lemma \ref{fieldtwist} in case when
  $$
  k'=\overline {k(S)}\; ,
  $$
  $$
  \sigma =e_P:\overline {k(S)}\stackrel{\sim }{\lra }k
  $$
and the inverse
  $$
  e_P^{-1}:k\stackrel{\sim }{\lra }\overline {k(S)}\; .
  $$
\end{pf}


Now let us see how these generalities apply in a more specific setting. 

Let again $k$ be an un countable algebraically closed field, let $S$ be an integral scheme of finite type over $k$, and let 
   $$
   \bcX \to S
   $$
be a smooth projective family of surfaces over $S$. Let also
   $$
   \xymatrix{
   \bcC \ar[rd]_-{} \ar[rr]^-{\iota } & & \bcX \ar[ld]^-{} \\
   & S & }
   $$
be a morphism over $S$, such that $\bcC $ is a smooth projective family of curves over $S$ with a section over $S$. In this situation, we have the associated Jacobian fibration
  $$
  \bcA =\Jac _{\bcC /S} \to S
  $$
of the family $\bcC /S$, such that for each point $s\in S$ we have the Jacobian $\bcA _s$ of the curve $\bcC _s$ over the residue field $\kappa (s)$ of the scheme $S$ at $s$. 

In particular, for each $k$-rational point $P$ on $S$ we have the closed immersion
  $$
  \iota _P:\bcC _P\to \bcX _P
  $$
of fibres at $P$ over $k$, and the corresponding Gysin homomorphism
  $$
  {\iota _P}_*:\bcA _P(k)\to A_0(\bcX _P)\; .
  $$
Let then $\bcA _{P,0}$ and $\bcA _{P,1}$ be the abelian subvarieties in $\bcA _P$ over $k$, introduced in Sections \ref{A0} and \ref{A1} respectively. 

Similarly, we have the obvious closed immersion
  $$
  \iota _{\bar \eta }:\bcC _{\bar \eta }\to \bcX _{\bar \eta }
  $$
of geometric generic fibres over $\overline {k(S)}$, and the corresponding Gysin homomorphism
  $$
  {\iota _{\bar \eta }}_*:\bcA _{\bar \eta }(\bar \eta )\to A_0(\bcX _{\bar \eta })\; .
  $$
Let $\bcA _{{\bar \eta },0}$ and $\bcA _{{\bar \eta },1}$ be the corresponding abelian subvarieties in $\bcA _P$ over the field $\overline {k(S)}$.

The commutative diagram (\ref{ulitka3}) gives us the commutative diagram
  \begin{equation}
  \label{nalim}
  \diagram
  \bcA _P \ar[dd]_-{\varkappa _P}
  \ar[rr]^-{{\iota _P}_*} & &
  A_0(\bcX _P)\ar[dd]^-{{\varkappa _P}_*} \\ \\
  \bcA _{\bar \eta }
  \ar[rr]^-{{\iota _{\bar \eta }}_*}
  & & A_0(\bcX _{\bar \eta })
  \enddiagram
  \end{equation}
in which the morphisms $\varkappa _P$ are isomorphisms of schemes over $k_0$, and ${\varkappa _P}_*$ are push-forwards on the Chow groups of degree zero $0$-cycles, guaranteed by Lemma \ref{ratkappa}.

Furthermore, let
   $$
   \Alb _{\bcX /S}
   $$
 be the Albanese scheme of the relative surface $\bcX /S$, i.e. the abelian scheme dual to the connected component $\Pic ^0_{\bcX /S}$ of $0$ of the Picard scheme $\Pic ^0_{\bcX /S}$ of the relative surface $\bcX $ over $S$. 
 
 As we assume that the family $\bcC $ has a section over $S$, the same section is also a section for the family $\bcX $ over $S$. Then we can also consider the universal Albanese morphism 
  $$
  \bcX \to \Alb _{\bcX /S}
  $$
over $S$. 

Assume, moreover, that the base scheme $S$ is regular. Then the universal Albanese morphism induces the Albanese homomorphism
  $$
  \alb :A_0(\bcX /S)\to \Alb _{\bcX /S}(S)
  $$
from the group of relative degree zero $0$-cycles modulo rational equivalence, in the sense of Section 20.1 in \cite{Fulton}, to the group of sections of the Albanese scheme over $S$. 

Also one can define the relative Gysin homomorphism
  $$
  \iota :\bcA (S)\to A_0(\bcX /S)
  $$
from the group of sections of the abelian scheme $\bcA $ over $S$ to the Chow group of relative $0$-cycles, whose composition with the Albanese homomorphism is induced buy the regular homomorphism
  $$
  \bcA \to \Alb _{\bcX /S}
  $$
of abelian schemes over $S$. 

Let then 
  $$
  \bcA _1\to S
  $$
be the connected component containing $0$ of the kernel of this homomorphism, considered as an abelian scheme over $S$. Then $\bcA _1$ is an abelian subscheme in the abelian scheme $\bcA $ over $S$. Then
  $$
  \bcA _{P,1}=(\bcA _1)_P
  $$
and
  $$
  \bcA _{\bar \eta ,1}=(\bcA _1)_{\bar \eta }
  $$
are the fibres of the morphism $\bcA _1\to S$ at $P$ and $\bar \eta $ respectively.

\begin{proposition}
\label{soglasovannost}
For any closed point $P$ in the $c$-open set $U$,
  $$
  \varkappa _P(A_{P,1})=A_{\bar \eta ,1}
  $$
and
 $$
 \varkappa _P(A_{P,0})=A_{\bar \eta ,0}
 $$
\end{proposition}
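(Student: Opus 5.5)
The plan is to treat the two equalities separately. Both rest on the fact that $\varkappa _P:\bcA _P\to \bcA _{\bar \eta }$ is an isomorphism of schemes over $k_0$, induced from the field isomorphism $e_P$ of Lemma \ref{iso}. It therefore carries abelian subvarieties to abelian subvarieties. It also transports every structure defined over the model over $k_0$: Albanese morphisms, kernels, connected components, and symmetric powers together with the loci $V^{u,v}_d$.

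For $A_{1}$, note that $\bcA _1\subset \bcA $ is an abelian subscheme over $S$. Since everything is defined over $k_0$ after possibly enlarging $k_0$ countably, we may take $\bcA _1$ to have a model $\bcA _{1,0}$ over $S_0$. The square defining $\varkappa _P$ is obtained by pulling back the isomorphism $\varkappa _P:\Spec (k)\to \bar \eta $ along $\bcA \to S$. Its restriction to the subscheme $\bcA _1$ is therefore the pullback along $\bcA _1\to S$. This gives $\varkappa _P((\bcA _1)_P)=(\bcA _1)_{\bar \eta }$, which is the first equality because $A_{P,1}=(\bcA _1)_P$ and $A_{\bar \eta ,1}=(\bcA _1)_{\bar \eta }$. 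A second route avoids the relative construction altogether. Since $\varkappa _P$ commutes with $\iota $ by (\ref{ulitka3}), it also commutes with the Albanese morphisms, which are defined by a universal property stable under base change along field isomorphisms. Hence it maps $\ker (\alpha _P)$ onto $\ker (\alpha _{\bar \eta })$, and so maps the identity component onto the identity component.

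For $A_{0}$, we use diagram (\ref{nalim}). By Lemma \ref{ratkappa}, ${\varkappa _P}_*$ is a group isomorphism $A_0(\bcX _P)\to A_0(\bcX _{\bar \eta })$ that preserves rational equivalence; its inverse is induced by $e_P^{-1}$ and preserves it as well. Hence $\varkappa _P$ restricts to a bijection $\Gys _0(k)\to \Gys _0(\overline {k(S)})$. Both fields are uncountable and algebraically closed, so Theorem \ref{gysstructure} applies on each side. We then push the decomposition forward. The $c$-closed set $\cup _{u,v}|A^{u,v}_d|$ together with its irredundant irreducible decomposition is mapped by the scheme isomorphism $\varkappa _P$ to a $c$-closed set in $\bcA _{\bar \eta }$ with an irredundant irreducible decomposition, whose $\overline {k(S)}$-points are exactly $\Gys _0(\overline {k(S)})$.

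The main obstacle is a subtlety here. The abelian variety $A_0$ is characterized via a particular $c$-closed set, whereas a priori only the set of rational points $\Gys _0$ is intrinsic. We therefore need an intrinsic characterization of $A_0$: it is the unique maximal abelian subvariety $T$ with $T(k)\subset \Gys _0(k)$. This follows from the proof of Theorem \ref{gysstructure}. Any such $T$ satisfies $T(k)=\cup _i (T\cap A_i)(k)$, so Lemma \ref{syroezhka1} gives $T\subset A_i$ for some $i$. Since $0\in T$, uniqueness of the component through $0$ in Lemma \ref{syroezhka3} forces $T\subset A_0$. With this characterization in hand, $\varkappa _P$ maps abelian subvarieties contained in $\Gys _0(k)$ bijectively onto abelian subvarieties contained in $\Gys _0(\overline {k(S)})$, because $\varkappa _P^{-1}$ does the same in the other direction. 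It therefore preserves the maximal one, which gives $\varkappa _P(A_{P,0})=A_{\bar \eta ,0}$.
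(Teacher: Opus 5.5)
Your proof is correct. For the first equality it is essentially the paper's argument; for the second it takes a different route.

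\textbf{First equality.} The paper only says that $A_{P,1}$ and $A_{\bar \eta ,1}$ are fibres of $\bcA _1$ over $S$, with a vague appeal to specialization on \'etale cohomology. Your first route makes this precise: $\varkappa _P$ is literally a base change over $S_0$, so specialization is not needed. Your second route, transporting the Albanese homomorphism $\alpha _P$ to $\alpha _{\bar \eta }$ and then kernels and identity components, avoids the relative abelian scheme $\bcA _1$ altogether. One small caution: do not enlarge $k_0$, since that shrinks the $c$-open set $U$ in the statement. No enlargement is needed anyway, because the constructions involved (Jacobian, Albanese, kernel, identity component) commute with base change from $k_0$. Your second route works for every $P\in U$ in any case.

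\textbf{Second equality.} The paper's route is as follows.
\begin{itemize}
\item It pushes the coset presentation $\Gys _{0,P}(k)=\cup _{x\in \Xi _P}(x+A_{P,0}(k))$ forward through $\varkappa _P$.
\item Using (\ref{nalim}), it identifies the image with $\Gys _{0,\bar \eta }(\bar \eta )$.
\item It notes that $\varkappa _P(A_{P,0})$ is again an abelian subvariety.
\item It invokes uniqueness of irredundant irreducible decompositions of $c$-closed sets and of the component through $0$ (Lemmas \ref{syroezhka2} and \ref{syroezhka3}) to match $\varkappa _P(A_{P,0})$ with $A_{\bar \eta ,0}$.
\end{itemize}
You instead first prove, via Lemma \ref{syroezhka1} and the uniqueness part of Lemma \ref{syroezhka3}, that $A_0$ is the greatest abelian subvariety whose rational points lie in $\Gys _0$. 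After that, transport by the semilinear isomorphism $\varkappa _P$ is formal.

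\textbf{Comparison.} Your route gives an intrinsic description of $A_0$ that depends only on the subgroup $\Gys _0(k)\subset A(k)$. As a by-product, it shows that $A_0$ is independent of the auxiliary choices of $d$ and of the loci $V^{u,v}_d$ in Theorem \ref{gysstructure}. It also never uses the set $\Xi $, which as literally defined in the paper is awkward (it should be a set of coset representatives). The paper's route transports more: the whole coset structure, not just the identity component. The price is an implicit comparison of two decompositions of the same set of rational points, which needs Lemma \ref{syroezhka1} applied component by component.
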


\begin{pf}
Since the abelian varieties $\bcA _{P,1}$ and $\bcA _{\bar \eta ,1}$ are the fibres of the family $\bcA _1$ over $S$, the first claim is actually true for any closed point $P$ on $S$, not only on $U$, and can be easily deduced using specialization isomorphisms on \'etale cohomology groups.

For the second claim, let $\Xi _P$ be the countable subset in $A_P$ and $\Xi _{\bar \eta }$ the countable subset in $A_{\bar \eta }$, such that we have the presentations of the Gysin kernels
  $$
  \Gys _{0,P}(k)=\cup _{x\in \Xi _P}(x+A_{P,0}(k))
  \qand
  \Gys _{0,\bar {\eta }}(\bar \eta )=
  \cup _{x\in \Xi _{\bar \eta }}(x+A_{\bar \eta ,0}(\bar \eta ))
  $$
in $A_P(k)$ and $A_{\bar \eta }(\bar \eta )$ respectively, as provided by Theorem \ref{gysstructure}. Then
  $$
  \varkappa _P(\Gys _{0,P}(k))=
  \varkappa _P(\cup _{x\in \Xi _P}(x+A_{P,0}(k)))=
  \cup _{x\in \Xi _P}(\varkappa _P(x)+\varkappa _P(A_{P,0}(k)))
  $$
The definition of $\varkappa _P$ and the commutative diagram (\ref{nalim}) give us that
  $$
  \varkappa _P(\Gys _{0,P}(k))=\Gys _{0,\bar \eta }(\bar \eta )\; .
  $$
Therefore,
  $$
  \cup _{x\in \Xi _P}(\varkappa _P(x)+\varkappa _P(A_{P,0}(k)))=
  \cup _{x\in \Xi _{\bar \eta }}(x+A_{\bar \eta ,0}(\bar \eta ))
  $$
inside the abelain group $A_{\bar \eta }(\bar \eta )$. 

Now, since $\varkappa _P$ are regular morphisms of schemes over $k_0$, we obtain that $\varkappa _P(A_{P,0})$ is a Zariski closed subset in $A_{\bar \eta }$. And as $\varkappa _P(A_{P,0}(k))$ is a subgroup in the group $A_{\bar \eta }(\bar \eta )$, the scheme-theoretic image $\varkappa _P(A_{P,0})$ is an abelian subvariety in the abelian variety $A_{\bar \eta }$. Lemma \ref{syroezhka2} and Lemma \ref{syroezhka3} finish the proof.
\end{pf}

\section{Gysin kernels in Lefschetz pencils}
\label{gys}

\subsection{$l$-adic monodromy and the Picard-Lefschetz formula}
\label{adicmonodromy}

Let $k$ be an algebraically closed field, and let $X$ be a smooth projective surface over $k$. Choose and fix an appropriate closed immersion of $X$ in to some projective space $\PR $, and a Lefschetz pencil on $X$ over $k$, see Proposition 1.5 on page 177 in \cite{FreitagKiehl} or Expos\'e XVII in \cite{SGA7-2}. Resolving the indeterminacy locus by blowing up a finite number of points on $X$, we shall assume, without loss of generality, that our pencil 
  $$
  f:X\to \PR ^1
  $$ 
is a regular morphism having a section 
  $$
  \PR ^1\to X
  $$
over $k$, see Theorem 31.3 on page 185 in \cite{MilneLEC} or Expos\'e XVII in \cite{SGA7-2}. 

Let $U$ be the Zariski open subset in $\PR ^1$, such that the fibre $C_t$ of the pencil $f$ at $t$ is smooth, for every closed point $t$ in $U$, and let
  $$
  \iota _t:C_t\to X
  $$
be the corresponding closed immersion of the fibre $C_t$ in to $X$.  

Let 
  $$
  K=k(\PR ^1)
  $$ 
be the function field of $\PR ^1$, and let
  $$
  \overline K
  $$
be the algebraic closure of the field $K$. Let also
  $$
  \eta =\Spec (K)
  $$
be the generic point of the projective line, and let
  $$
  \bar \eta =\Spec (\overline K)
  $$
be the geometric generic point of the line $\PR ^1$. 

The generic fibre $C_{\eta }$ of the Lefschetz pencil is defined over $\eta $. Accordingly, we obtain the closed immersion
  $$
  \iota _{\eta }:C_{\eta }\to X\times _k\eta 
  $$
over $\eta $. Extending scalars, we also have the closed immersion
  $$
  \iota _{\bar \eta }:C_{\bar \eta }\to X\times _k{\bar \eta }
  $$
over the geometric generic point $\bar \eta $.

Let
  $$
  A=\Jac _{C_{\eta }/\eta }
  $$
be the Jacobian of the curve $C_{\eta }$ over $\eta $, and let
  $$
  A_{\bar \eta }=A\times _{\eta }\bar \eta 
  $$
be the corresponding extension of scalars. 

Since the field $\overline {k(\PR ^1)}$ is algebraically closed, we also have the abelian subvariety
  $$
  A_{\bar \eta ,1}\subset A_{\bar \eta }\; ,
  $$
a particular case of the abelian variety $A_1$ introduced in Section \ref{A1}. 

Assume that the ground field $k$ is uncountable. Then the closure of the function field $k(\PR ^1)$ is uncountable too, and, therefore, we obtain the abelian subvariety
  $$
  A_{\bar \eta ,0}\subset A_{\bar \eta ,1}\; ,
  $$
a particular case of the abelian variety $A_0$ introduced in Section \ref{A0}, and associated with the closed immersion $\iota _{\bar \eta }$.

Let now 
  $$
  K\subset L\subset \overline K
  $$ 
be the minimal subextension of fields, such that all three abelian varieties $A_{\bar \eta ,0}$, $A_{\bar \eta ,1}$ and $A_{\bar \eta }$, as well as the closed immersions of $A_{\bar \eta ,0}$ in to $A_{\bar \eta ,1}$ and $A_{\bar \eta ,1}$ in to $A_{\bar \eta }$ are all defined over $L$. 

Write
  $$
  D=\PR ^1
  $$
and choose a smooth projective curve $D'$ together with a finite morphism
  $$
  D'\to D
  $$
over $k$, such that
  $$
  L=k(D')\; ,
  $$
and the extension $k(D)\subset k(D')$ is induced by the morphism from $D'$ to $D$. 

Let
  $$
  \eta '=\Spec (k(D'))
  $$
be the generic point of the curve $D'$. Since $L$ is a subfield in $\overline K$, if $\eta '$ is the generic point of $D'$, the geometric generic point $\bar \eta '$ remains the same,
  $$
  \bar \eta '=\bar \eta \; .
  $$

Spreading our abelian varieties over an appropriate Zariski open subset $U'$ in $D'$, we obtain abelian schemes 
  $$
  \alpha :\bcA \to U'\; ,
  $$ 
  $$
  \alpha _1:\bcA _1\to U'
  $$ 
and 
  $$
  \alpha _0:\bcA _0\to U'
  $$
whose generic fibres are the models of our abelian varieties over the minimal field of definition $L$,
  $$
  (\bcA )_{\eta '}=A_L\; ,
  $$
  $$
  (\bcA _1)_{\eta '}=A_{L,1}
  $$
and 
  $$
  (\bcA _0)_{\eta '}=A_{L,0}\; ,
  $$
respectively. Then, of course, 
  $$
  (\bcA )_{\bar \eta }=A_{\bar \eta }\; ,
  $$
  $$
  (\bcA _1)_{\bar \eta }=A_{\bar \eta ,1}
  $$
and 
  $$
  (\bcA _0)_{\bar \eta }=A_{\bar \eta ,0}\; .
  $$
  
Also we have the obvious morphisms
  $$
  \bcA _1\to \bcA 
  $$
and
  $$
  \bcA _0\to \bcA _1
  $$
over $U'$, such that, when passing to the fibres at the geometric generic point $\bar \eta $, we obtain the closed immersions
  $$
  A_{\bar \eta ,1}\to A_{\bar \eta }
  $$
and
  $$
  A_{\bar \eta ,0}\to A_{\bar \eta ,1}
  $$
over $\overline K$.

Notice also that, since $\bcA $ is a spread of $A_{\eta }$ over $U'$ and $A_{\eta }$ is a projective variety over $L$, the morphism $\alpha $ is locally projective and, therefore, proper, see Tag 01WC in \cite{StacksProject}. For the same reason, the morphisms $\alpha _0$ and $\alpha _1$ are proper. Cutting more points from $D'$ we may assume that the morphisms $\alpha $, $\alpha _0$ and $\alpha _1$ are all smooth over $U'$, see Section 3.2 in \cite{Poonen}.

The varieties $A_{\bar \eta }$, $A_{\bar \eta ,1}$ and $A_{\bar \eta ,0}$ are non-canonically isomorphic to corresponding varieties at closed points in a $c$-open subset in $\PR ^1$, as explained in Section \ref{gengeneric}. Namely, choose an appropriate $c$-open subset $U$ in $D=\PR ^1$, such that the fibre $C_t$ is smooth and the point $\bar \eta $ scheme-theoretically is isomorphic to each closed point $t$ in $U$, as in Section \ref{gengeneric}. 

Let
  $$
  A_t=\Jac _{C_t/k}
  $$
be the Jacobian of $C_t$. Since the ground field $k$ is algebraically closed, we have the abelian subvariety 
  $$
  A_{t,1}\subset A_t\; ,
  $$
and $k$ is uncountable, we also have the abelian subvariety
  $$
  A_{t,0}\subset A_{t,1}\; ,
  $$
for each closed point $t$ in $U$. Since the point $\bar \eta $ scheme-theoretically is isomorphic to each closed point $t$ in $U$, these isomorphisms induce scheme-theoretic isomorphisms 
  $$
  A_{\bar \eta ,1}\simeq A_{t,1}
  $$
and 
  $$
  A_{\bar \eta ,0}\simeq A_{t,0}\; ,
  $$
for each closed point $t$ in $U$.

Next, let $\eta '$ be the generic point of $D'$, let $\bar \eta '=\bar \eta $ be the geometric generic point of $D'$, let $\pi _1(U',\bar \eta )$ be the \'etale fundamental group of $D'$ pointed at $\bar \eta $. For any scheme $V$ and non-negative integer $n$ let $(\ZZ /l^n)_V$ be the constant sheaf on $V$ associated to the group $\ZZ /l^n$.

Since the morphisms $\alpha _0$, $\alpha _1$ and $\alpha $ are smooth and proper, the higher direct images
  $$
  R^1{\alpha _0}_*(\ZZ /l^n)_{\bcA _0}\; ,\; \; \;
  R^1{\alpha _1}_*(\ZZ /l^n)_{\bcA _1}
  \qqand R^1\alpha _*(\ZZ /l^n)_{\bcA }
  $$
are locally constant by Theorem 8.9, Ch. I in \cite{FreitagKiehl}. Then the fibres of these sheaves at the geometric generic point $\bar \eta $ are finite continuous $\pi _1(U',\bar \eta )$-modules, see Proposition A I.7 in \cite{FreitagKiehl}. The proper base change (see, for example, Theorem $6.1'$ on page 62 in loc. cit.) gives us the identification of the geometric generic fibres with the corresponding \'etale cohomology groups,
  $$
  (R^1{\alpha _0}_*(\ZZ /l^n)_{\bcA _0})_{\bar \eta }=
  H^1_{\et }({\bcA _0}_{\bar \eta },\ZZ /l^n)\; ,
  $$
  $$
  (R^1{\alpha _1}_*(\ZZ /l^n)_{\bcA _1})_{\bar \eta }=
  H^1_{\et }({\bcA _1}_{\bar \eta },\ZZ /l^n)
  $$
and
  $$
  (R^1\alpha _*(\ZZ /l^n)_{\bcA })_{\bar \eta }=
  H^1_{\et }(\bcA _{\bar \eta },\ZZ /l^n)\; .
  $$
Then we obtain that $\pi _1(U',\bar \eta )$ acts continuously on
  $$
  H^1({\bcA _0}_{\bar \eta },\ZZ /l^n)\; ,
  $$
  $$
  H^1({\bcA _1}_{\bar \eta },\ZZ /l^n)
  $$
and
  $$
  H^1(\bcA _{\bar \eta },\ZZ /l^n)\; .
  $$

Passing to limits on $n$, and then tensoring with $\QQ _l$ we then obtain that $\pi _1(U',\bar \eta )$ acts continuously on the $l$-adic cohomology groups
  $$
  H^1({\bcA _0}_{\bar \eta },\QQ _l)=
  H^1(A_{\bar \eta ,0},\QQ _l)\; ,
  $$
  $$
  H^1({\bcA _1}_{\bar \eta },\QQ _l)=H^1_{\et }(A_{\bar \eta ,1},\QQ _l)
  $$
and
  $$
  H^1(\bcA _{\bar \eta },\QQ _l)=H^1_{\et }(A_{\bar \eta },\QQ _l)\; .
  $$

We also need the (Poincar\'e dual) Gysin homomorphisms
  $$
  g_0:H^1(A_{\bar \eta ,0},\QQ _l)\to H^1(A_{\bar \eta },\QQ _l)
  $$
and 
  $$
  g_1:H^1(A_{\bar \eta ,1},\QQ _l)\to
  H^1(A_{\bar \eta },\QQ _l)\; ,
  $$
see Section \ref{A1}. The action of $\pi _1(U',\bar \eta )$ naturally commutes with both $g_0$ and $g_1$.

The standard theory of \'etale monodromy, see \cite{WeilConjI} or \cite{FreitagKiehl}, gives us that the \'etale fundamental group $\pi _1(U,\bar \eta )$ acts continuously on the . The latter group acts in the cohomology group  
  $$
  H^1(C_{\bar \eta },\QQ _l)\; ,
  $$
and this action is tame. More precisely, let
  $$
  \pi ^{\tame }_1(U,\bar \eta )
  $$
be the tame \'etale fundamental group of $U$, which can be considered as the quotient group of $\pi _1(U,\bar \eta )$. Then the action of the latter on cohomology passes through wild ramification, after shrinking $U$, which means that the above action factorizes bringing the action of $\pi ^{\tame }_1(U,\bar \eta )$ on $H^1(C_{\bar \eta },\QQ _l)$, see Theorem 7.1 on pp 247 - 248 in \cite{FreitagKiehl}. 

Since the morphism from $D'$ to $D$ is finite, the fundamental group $\pi _1(U',\bar \eta )$ is a subgroup in the group $\pi _1(U',\bar \eta )$, and the same for tame fundamental groups. Applying the same arguments to the obvious morphism 
  $$
  f_{U'}:X_{U'}\to U'
  $$
we obtain the continuous action of the fundamental group $\pi _1(U',\bar \eta )$ on the same cohomology group $H^1(C_{\bar \eta },\QQ _l)$.

Next, we are going to use the Picard-Lefschetz formulae, and that's why we need to recall the local-to-global generation of the tame fundamental group, for the convenience of the reader. We will follow \cite{FreitagKiehl}. 

Let
  $$
  S=D\smallsetminus U
  $$
be the set of bad points on $D=\PR ^1$, where we have singular fibres and wild ramification, and for each closed point 
  $$
  s\in S\; ,
  $$
let
  $$
  \delta _s\in H^1(C_{\bar \eta },\QQ _l)
  $$
be the unique up-to conjugation vanishing cycle corresponding to the point $s$ in the standard sense (see Theorem 7.1 on page 247 in \cite{FreitagKiehl}). Let also
  $$
  E\subset H^1(Y_{\bar \eta },\QQ _l)
  $$
be the $\QQ _l$-vector subspace generated by all the elements $\delta _s$, $s\in S$. That is, $E$ is the space of vanishing cycles in $H^1(C_{\bar \eta },\QQ _l)$. Then $E$ is nothing but the kernel of the Gysin homomorphism
  $$
  H^1(C_{\bar \eta },\QQ _l)\to H^3(X_{\bar \eta },\QQ _l(1))\; ,
  $$
induce by the closed immersion of the geometric generic fibre $C_{\bar \eta }$ in to the surface 
  $$
  X_{\bar \eta }=X\times \bar \eta \; ,
  $$
see Section 4.3 in \cite{WeilConjII} or consult \cite{FreitagKiehl}. Moreover, the space $E$ is a $\pi _1^{\tame }(U,\bar \eta )$-submodule in $H^1(C_{\bar \eta },\QQ _l)$, see \cite{WeilConjI}, \cite{WeilConjII} and \cite{FreitagKiehl}.

Let, 
  $$
  \mu _n=\{ \alpha \in \overline K\, |\, \alpha ^n=1\}
  $$
be the multiplicative group of $n$-th root of unity in the algebraic closure of the function field of the projective line, let  
  $$
  p=\left\{
  \begin{array}{ll}
  p\; , & \mbox{if $\cha (k)=p>0$} \\
  1 & \mbox{if $\cha (k)=0$}
  \end{array}
  \right.
  $$
be the exponential characteristic of the ground field $k$, and let  
  $$
  \hat \ZZ ^{(p)}(1)=\lim _{(n,p)=1}\mu _n
  $$
be the limit of the inverse system of all groups $\mu _n$ with $n$ coprime to $p$ and the obvious transition homomorphisms 
  $$
  \mu _{n'}\to \mu _n\; ,
  $$
whenever $n|n'$. Then
  $$
  \hat \ZZ ^{(p)}(1)\simeq \prod _{l\neq p}\ZZ _l(1)
  $$
is the product over all primes $l$ not equal to $p$, and we obtain the canonical homomorphism 
  $$
  \hat \ZZ ^{(p)}(1)\to \ZZ _l(1)\; ,
  $$
sending
  $$
  u\to \bar u\; ,
  $$
for each such prime number $l$. 

On the other hand, for each $s$ we have a homomorphism 
  $$
  \gamma _s:\hat \ZZ ^{(p)}(1)\to \pi _1^{\tame }(U,\bar \eta )\; ,
  $$
and let
  $$
  I_s=\im (\gamma _s)
  $$
be the image of it. Then $\gamma _s$ and $I_s$ are defined up to conjugation inside the tame group $\pi _1^{\tame }(U,\bar \eta )$, see the details on pp 246 - 247 in \cite{FreitagKiehl}, and compare it with another view of the same thing given on pp 186 - 187 in \cite{MilneLEC}. The group $I_s$ can be considered as a local tame fundamental group at $s$.

All the subgroups $I_s$, for all $s\in S$, generate the tame fundamental group topologically. In other words, the smallest closed subgroup containing all the subgroups $I_s$ is the whole tame fundamental group $\pi _1^{\tame }(U,\bar \eta )$. This is sufficient to control submodules in the $\pi _1^{\tame }(U,\bar \eta )$-module $E$ by looking at the action by elements in $I_s$ and using the Picard-Lefschetz formula. 

Recall that the Picard-Lefschetz formula tells us that, for any
  $$
  x\in H^1(C_{\bar \eta },\QQ _l)\; ,
  $$
and for any
  $$
  u\in I_s\; ,
  $$
one has


   \begin{equation}
   \label{PicardLefschetz}
   \gamma _s(u)x=
   x\pm \bar u\langle x,\delta _s\rangle \delta _s\; ,
   \end{equation}
   
\medskip

\noindent see Theorem 7.1 on pages 247 - 248, or page 190 in \cite{MilneLEC}.

\subsection{Deligne's irreducibility and the first dichotomy (Theorem A)}
\label{irreducibility}

In this section we will prove Theorem A sated in Introduction. We need some more lemmas.

\medskip

\begin{lemma}
\label{lem:purely-insep-uh}
Let $f:X\to Y$ be a finite surjective morphism between two integral schemes over $k$, such that the function field $k(X)$ is purely inseparable over the function field $k(Y)$. Then there exists a nonempty open subscheme $U$ in $Y$, such that $X_U=f^{-1}(U)\to U$ is a universal homeomorphism of schemes.
\end{lemma}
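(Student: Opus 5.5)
We reduce to the affine case and then verify the three defining properties of a universal homeomorphism: integral (hence universally closed), surjective, and radicial (universally injective). By EGA IV 18.12.11, or Tag 04DF in \cite{StacksProject}, these together characterise universal homeomorphisms.

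Since $f$ is finite, it is already integral and universally closed everywhere. Its surjectivity is preserved by any base change, because it is a surjective morphism of schemes. So the whole task is to find a nonempty open $U\subset Y$ over which $f$ is radicial, i.e. injective on points with purely inseparable residue field extensions, universally.

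Shrinking $Y$, we may assume $Y=\Spec (R)$ is affine. Then $X=\Spec (S)$ with $S$ a finite $R$-algebra and a domain, and $K=\mathrm{Frac}(R)\subset L=\mathrm{Frac}(S)$ is finite and purely inseparable. If $\cha (k)=0$, or more generally if $L=K$, the extension is trivial. In that case $f$ is birational and finite, so $f$ is an isomorphism over a dense open set. To see this, pick generators $s_1,\dots ,s_m$ of $S$ over $R$. Each $s_i$ lies in $K$, so a single common denominator $h\in R$ gives $S_h=R_h$. An isomorphism is certainly a universal homeomorphism.

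In general, $\cha (k)=p>0$ and there is $q=p^e$ with $L^q\subset K$. Take algebra generators $s_1,\dots ,s_m$ of $S$ over $R$ and write $s_i^q=a_i/h$ with $a_i,h\in R$. After inverting $h$, i.e. taking $U=D(h)$, we have $s_i^q\in R_h$ for all $i$. Since the $q$-th power (Frobenius) map is additive in characteristic $p$, this gives $S_h^q\subset R_h$.

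Now consider any base change $T=\Spec (R')\to U$ and put $S'=S_h\otimes _{R_h}R'$. Every element of $S'$ is a sum of terms $s\otimes r'$, and its $q$-th power lies in the image of $R'$. So $R'\to S'$ is an integral ring map such that every element of $S'$ has a $q$-th power in the image of $R'$.

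The standard argument then shows that $\Spec (S')\to \Spec (R')$ is injective with purely inseparable residue field extensions. Let $\mathfrak q$ be a prime of $S'$. Then $\mathfrak q$ is determined by its contraction $\mathfrak p$, since $x\in \mathfrak q$ if and only if $x^q\in \mathfrak p$. Moreover, each element of the residue field $\kappa (\mathfrak q)$ has its $q$-th power in $\kappa (\mathfrak p)$. This is exactly radiciality, and it holds for every base change, as required.

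The only step needing care is this last verification that radiciality holds after arbitrary base change. Radiciality is stable under base change (Tag 01S4 in \cite{StacksProject}), so it would also suffice to prove it for $f_U$ itself. The explicit $q$-th power argument covers both at once, so we expect no genuine obstacle beyond citing the characterisation of universal homeomorphisms.
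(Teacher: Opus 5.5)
Your proof is correct, but it takes a different route from the paper's. The paper never chooses generators or denominators. Instead it uses the characterisation of radicial morphisms by surjectivity of the diagonal (Tag 01S4).

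\begin{itemize}
\item Over the generic point the morphism is $\eta_X=\Spec (k(X))\to \eta_Y=\Spec (k(Y))$. This is radicial because $k(X)/k(Y)$ is purely inseparable, so $\Delta_\eta :\eta_X\to \eta_X\times_{\eta_Y}\eta_X$ is surjective.
\item A spreading-out lemma (cited as Tag 07RR) then gives a nonempty open $U$ with $\Delta_U:X_U\to X_U\times_UX_U$ surjective. Hence $X_U\to U$ is radicial.
\item Finite, surjective and radicial gives a universal homeomorphism, exactly as in your final step.
\end{itemize}

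You replace the diagonal criterion and the spreading-out lemma with explicit commutative algebra. Choosing $q$ with $L^q\subset K$ and a common denominator $h$ gives $S_h^q\subset R_h$. From this you check injectivity on spectra and pure inseparability of residue fields by hand, after arbitrary base change.

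The paper's argument is shorter and purely formal: it needs only that the generic fibre is radicial, plus a general principle for passing from the generic point to a dense open. Yours is self-contained and produces an explicit open $U=D(h)$. It also proves something slightly stronger. Since $R_h\to S_h$ is injective, $x\mapsto x^q$ defines a ring map $S_h\to R_h$. So the $e$-th power of the absolute Frobenius of $X_U$ factors through $X_U\to U$, which by itself already forces $X_U\to U$ to be a universal homeomorphism.
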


\begin{pf}
Let $\eta_Y$ and $\eta_X$ be the generic points of $Y$ and $X$ respectively.  The base change to $\eta _Y$ of the diagram
  $$
  \diagram
  X \ar[ddrr]_-{} \ar[rr]^-{\Delta } & & X\times _YX \ar[dd]^-{} \\ \\
  & & Y
  \enddiagram
  $$
gives us the diagram
  $$
  \diagram
  \eta _X \ar[ddrr]_-{} \ar[rr]^-{\Delta _{\eta }} & & 
  \eta _X\times _{\eta _Y}\eta _X \ar[dd]^-{} \\ \\
  & & Y
  \enddiagram
  $$
The morphism $\Delta _{\eta }$ is surjective by Tag 01S4 in \cite{StacksProject}. By Tag 07RR in \cite{StacksProject}, there exists a nonempty open $U$ of $Y$ such that the morphism 
  $$
  \Delta _U:X_U\to X_U\times_UX_U
  $$ 
is surjective. By Tag 01S4 again, the morphism $X_U\to U$ is radicial. Since it is also finite and surjective, our result follows.
\end{pf}

\begin{lemma}
\label{chernika}
Let $X$ and $Y$ be two smooth, connected schemes of dimension $1$, separated and of finite type over $k$, and let $f:X\to Y$ be a finite flat surjective morphism over $k$. Let $\bar \eta$ be the geometric generic point of the scheme $Y$. Then there exists a nonempty open subset $U\subset Y$, such that for all nonempty open subsets $W\subset U$ the \'etale fundamental group $\pi _1(f^{-1}(W),\bar \eta )$ is a subgroup of finite index in the \'etale fundamental group $\pi _1(W,\bar \eta )$, and the same for tame fundamental groups.
\end{lemma}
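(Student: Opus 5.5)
The plan is to factor $f$ through its separable part and then treat the separable and purely inseparable pieces separately. Since $X$ and $Y$ are smooth integral curves and $f$ is finite flat surjective, the extension $k(Y)\subset k(X)$ is finite. Let $k(Y)\subset M\subset k(X)$ be the separable closure of $k(Y)$ in $k(X)$, so that $M/k(Y)$ is finite separable and $k(X)/M$ is purely inseparable. Let $Z$ be the normalization of $Y$ in $M$. Then $Z$ is a smooth curve, and $f$ factors as
$$
X\stackrel{g}{\lra }Z\stackrel{h}{\lra }Y\; ,
$$
with $g$ and $h$ finite and surjective. The function field extension of $g$ is purely inseparable, and that of $h$ is separable.

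First I would shrink $Y$. By Lemma \ref{lem:purely-insep-uh} applied to $g$, there is a nonempty open subset of $Z$ over which $g$ is a universal homeomorphism. Since $h$ is finite, the image of its complement is a finite set of closed points, which we remove from $Y$. Because $h$ is generically étale, we may also remove the finitely many branch points of $h$. Let $U$ be the resulting open subset of $Y$. For any nonempty open $W\subset U$, base change preserves both properties: $g^{-1}(h^{-1}(W))\to h^{-1}(W)$ is a universal homeomorphism, and $h^{-1}(W)\to W$ is finite étale surjective.

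Next I would compute fundamental groups, choosing compatible geometric points over $\bar \eta $. By the topological invariance of the étale site (SGA 1, Exp. IX, 4.10; Stacks Tag 04DY), a universal homeomorphism induces an equivalence of finite étale sites. Hence
$$
\pi _1(f^{-1}(W),\bar \eta )\stackrel{\sim }{\lra }\pi _1(h^{-1}(W),\bar \eta )\; .
$$
For the finite étale cover $h^{-1}(W)\to W$ of degree $[M:k(Y)]$, connected because $Z$ is integral, Galois theory of $\pi _1$ identifies $\pi _1(h^{-1}(W),\bar \eta )$ with an open subgroup of $\pi _1(W,\bar \eta )$ of index $[M:k(Y)]$. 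This is the stabilizer of a point in the fibre. Composing the two maps gives the claim.

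For tame groups, two points need checking. First, the universal homeomorphism $g$ matches tame covers with tame covers. A finite étale cover of $h^{-1}(W)$ pulls back to one of $f^{-1}(W)$, and the residue extensions at the boundary points differ only by purely inseparable extensions. So ramification indices at the boundary points change by at most a $p$-power factor, and the ramification index prime to $p$ is unchanged. I expect this step to be the main obstacle, and I would handle it by working with the completed local fields at the points of the compactifications. Second, $h$ is étale over $W$, so the tame fundamental group of $h^{-1}(W)$ is the preimage in $\pi _1^{\tame }(W,\bar \eta )$ of the stabilizer. To guarantee this, I would also shrink $U$ so that $h$ extends to a map of smooth compactifications that is tamely ramified at the boundary. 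If that is not possible, I would settle for a finite-index subgroup, which is all the statement requires. This gives the same index statement for $\pi _1^{\tame }$.
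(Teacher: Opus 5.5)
Your treatment of the étale groups is the same as the paper's proof. You factor $f$ through the separable closure of $k(Y)$ in $k(X)$, use Lemma \ref{lem:purely-insep-uh} and shrink so that the purely inseparable part is a universal homeomorphism and the separable part is finite étale, then apply topological invariance of the étale site and Galois theory of finite étale covers.

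The gap is in the tame part, at the step where you shrink $U$ so that $h$ becomes tamely ramified at the boundary. Shrinking only adds boundary points. Every point of $\bar Y$ over which $\bar h$ ramifies lies outside every $W$ over which $h$ is étale. So a point of wild ramification of $h$ is a wild boundary point of every admissible $W$, and shrinking cannot remove it.

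In that situation the literal tame claim is false, so no argument can give it. Take $Y=\AF^1$ and $f=h\colon Z\to \AF^1$ the Artin--Schreier cover $z^p-z=x^m$ with $m\geq 2$ prime to $p$. This cover is étale over $\AF^1$ and wildly ramified at $\infty$. For every nonempty open $W\subset \AF^1$, the cover $f^{-1}(W)\to W$ is Galois of prime degree $p$ and not tame. Hence the image of $\pi_1(f^{-1}(W),\bar\eta)$ in $\pi_1^{\tame}(W,\bar\eta)$ is the whole group. On the other hand, $\dim \Hom(\pi_1^{\tame}(f^{-1}(W)),\FF_\ell)=2g+pr$ while $\dim \Hom(\pi_1^{\tame}(W),\FF_\ell)=r$, where $r=\#(\AF^1\smallsetminus W)$ and $g=(p-1)(m-1)/2>0$. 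So the surjection $\pi_1^{\tame}(f^{-1}(W),\bar\eta)\to \pi_1^{\tame}(W,\bar\eta)$ is not injective, and $\pi_1^{\tame}(f^{-1}(W),\bar\eta)$ is not a subgroup of $\pi_1^{\tame}(W,\bar\eta)$.

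What is true, and what your fallback ``settle for a finite-index subgroup'' should state explicitly, is the image version. Tame covers pull back to tame covers, so there is a map $\pi_1^{\tame}(f^{-1}(W),\bar\eta)\to\pi_1^{\tame}(W,\bar\eta)$. Its image equals the image of $\pi_1(f^{-1}(W),\bar\eta)$ under the surjection $\pi_1(W,\bar\eta)\to\pi_1^{\tame}(W,\bar\eta)$. It is therefore an open subgroup of index dividing $[M:k(Y)]$.

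This version needs neither tameness of $h$ at the boundary nor your first check that $g$ preserves tameness. That check is also mis-justified: a change of ramification index by a $p$-power factor would destroy tameness. The correct reason is that $\hat K_x/\hat K_z$ is purely inseparable, so $L\mapsto L\hat K_x$ preserves degrees and Galois groups.

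The image version is all that the paper's one-line ``follows from the proven case'' actually yields. It is also all that Theorem \ref{firstdichotomy} uses together with Lemma \ref{porechki}, since only elements of $\pi_1^{\tame}(U,\bar\eta)$ coming from $\pi_1(U',\bar\eta)$ need to preserve $E_0$.
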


\begin{proof}
By the correspondence between algebraic curves and fields of transcendence degree $1$ over $k$, we can factor $f$ as a composition
  $$
  X\stackrel{u}{\lra }X'\stackrel{f'}{\lra }Y
  $$
in a way, such that the field $k(X)$ is purely inseparable over the field $k(X')$, and $k(X')$ is separable over $k(Y)$. Then we have the following diagram of cartesian squares:
  $$
  \xymatrix{
  X_{V'} \ar[rr]^-{u_{V'}} \ar[dd]_-{} & & V' \ar[dd]_-{} & & \\ \\
  X_V \ar[rr]^-{u_V} \ar[dd]_-{} & & X'_V \ar[rr]^-{f'_V} \ar[dd]_-{} & &
  V \ar[dd]_-{} \\ \\
  X \ar[rr]^-{u} & &
  X' \ar[rr]^-{f'} & &
  Y
  }
  $$
where 
  $$
  V\subset Y\qqand V' \subseteq X'_V
  $$ 
are nonempty open subschemes, such that the morphism $f'_V$ is finite \'etale, and $u_{V'}$ is a universal homeomorphism by Lemma \ref{lem:purely-insep-uh}.

Shrinking $V$ to $V\smallsetminus f'_V(X'_V\smallsetminus V')$, we may assume that $u_V$ is a universal homeomorphism too. 

Then for any open $W\subseteq V$, we have a cartesian diagram
  $$
  \xymatrix{
  X_W \ar[rr]^-{u_W} \ar[dd]_-{} & & X'_W \ar[rr]^-{f'_W} \ar[dd]_-{} & &
  W \ar[dd]_-{} \\ \\
  X_V \ar[rr]^-{u_V} & & X'_V \ar[rr]^-{f'_V} & & V
  }
  $$
where $u_W$ is a universal homeomorphism, and $f_W'$ is a finite \'etale.

Then, by Tag 0BQN in \cite{StacksProject}, we obtain an isomorphism
  $$
  \pi _1(X_W,\bar \eta )\simeq \pi _1(X'_W,\bar \eta )\; .
  $$

Since $f'_W$ is finite \'etale, the group $\pi _1(X'_W,\bar \eta )$ is a subgroup of finite index in $\pi _1(W,\bar \eta )$ by Section 1.6, Expos\'e 10 in \cite{SGA1}.

The claim for tame fundamental groups follows from the proven case.
\end{proof}

\begin{lemma}
\label{porechki}
Let $G$ be a group, and let $H$ be a subgroup of finite index $n$ in $G$. Then, for every element $g\in G$, 
  $$
   g^{n!}\in H\; .
  $$
\end{lemma}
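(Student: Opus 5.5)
The plan is to use the action of $G$ on the set of left cosets $G/H$. This set has exactly $n$ elements, and $G$ acts on it by left multiplication, $x\cdot gH = xgH$. The action gives a group homomorphism
$$
\rho :G\lra \Sigma _n
$$
into the symmetric group on the $n$ cosets.

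Fix $g\in G$. Since $\Sigma _n$ is a finite group of order $n!$, Lagrange's theorem says that the order of $\rho (g)$ divides $n!$. Hence $\rho (g)^{n!}=\rho (g^{n!})$ is the identity permutation of $G/H$.

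In particular, $g^{n!}$ fixes the trivial coset $H$. That is, $g^{n!}H=H$, which means exactly that $g^{n!}\in H$. This finishes the proof.

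An alternative argument avoids the homomorphism. Consider the $n+1$ cosets $H, gH, \ldots , g^nH$. By pigeonhole two of them coincide, say $g^iH=g^jH$ with $0\le i<j\le n$. Then $g^{j-i}\in H$ with $1\le j-i\le n$, so $j-i$ divides $n!$, and powers of an element of $H$ stay in $H$. Either route is elementary, and there is no real obstacle; the only point needing care is to work with cosets rather than assume $H$ is normal.

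In the paper, this lemma will be applied with $G=\pi _1^{\tame }(U,\bar \eta )$ and $H$ the finite-index subgroup coming from $D'$, as provided by Lemma \ref{chernika}. Some power of each local monodromy element $\gamma _s(u)$ then lies in $\pi _1^{\tame }(U',\bar \eta )$, which allows the Picard-Lefschetz formula (\ref{PicardLefschetz}) to be used over $U'$.
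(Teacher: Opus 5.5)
Your proof is correct and is essentially the paper's argument: the paper also lets $G$ act on the $n$ left cosets $G/H$, obtains a homomorphism $\varphi\colon G\to S_n$, and concludes $g^{n!}\in\ker(\varphi)\subset H$. Your pigeonhole variant ($g^{j-i}\in H$ with $1\le j-i\le n$) is also valid.
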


\begin{proof}
Consider the action of the group $G$ on the set of left cosets $G/H$ by left multiplication:
  $$
  G\times G/H\to G/H\; , 
  $$
  $$
  g\cdot (xH)=(gx)H\; .
  $$
Since 
  $$
  |G/H|=[G:H]=n\; ,
  $$
this action induces a homomorphism
  $$
  \varphi :G\lra S_n\; ,
  $$
where $S_n$ is the group of permutations of the set $G/H$.

Hence, for each $g\in G$,
  $$
  g^{n!}\in \ker(\varphi )\subset H\; .
  $$
\end{proof}

\medskip

Now we are ready to prove our first dichotomy theorem (Theorem A in Introduction).

\medskip

\begin{theorem}
\label{firstdichotomy}
Under the assumptions above, either $A_{\bar \eta ,0}=0$ or $A_{\bar \eta ,0}=A_{\bar \eta ,1}$.
\end{theorem}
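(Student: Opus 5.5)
The plan is to transport the inclusion $A_{\bar \eta ,0}\subset A_{\bar \eta ,1}$ to cohomology and apply the Deligne--Katz irreducibility of the monodromy action on vanishing cycles. The injective maps $\zeta _0$, $\zeta _1$, $\zeta _{01}$ from Section \ref{A0} give a chain of $\QQ _l$-subspaces
  $$
  \zeta _0(H^1(A_{\bar \eta ,0},\QQ _l))\subset \zeta _1(H^1(A_{\bar \eta ,1},\QQ _l))\subset H^1(A_{\bar \eta },\QQ _l)\simeq H^1(C_{\bar \eta },\QQ _l)\; ,
  $$
using $\zeta _1\circ \zeta _{01}=\zeta _0$ and the standard identification of $H^1$ of a curve with $H^1$ of its Jacobian. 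Write $W_0$ and $W_1$ for the two images. First I will show that $W_1=E$, the space of vanishing cycles. By Lemma \ref{dualities}, $W_1$ corresponds to the kernel of $H_1(A_{\bar \eta })\to H_1(\Alb (X_{\bar \eta }))$. Via Lemma \ref{H1Albanese} and Poincaré duality, this kernel is the kernel of the Gysin map $H^1(C_{\bar \eta },\QQ _l)\to H^3(X_{\bar \eta },\QQ _l(1))$, which is $E$. One may need to check that the polarisation-based identifications match the duality used to describe $E$, but both come from the principal polarisation, so they should.

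Next I will show that $W_0$ is stable under an open subgroup of $\pi _1^{\tame }(U,\bar \eta )$. The group scheme $\bcA _0\subset \bcA $ is defined over $U'$, so $\pi _1(U',\bar \eta )$ preserves $W_0$, since the Gysin maps $g_0$ commute with monodromy. By Lemma \ref{chernika}, $\pi _1(U',\bar \eta )$ has finite index $n$ in $\pi _1(U,\bar \eta )$ after shrinking, and the same holds for tame groups. By Lemma \ref{porechki}, for each $s$ and $u\in \hat \ZZ ^{(p)}(1)$ the element $\gamma _s(u)^{n!}=\gamma _s(n!u)$ preserves $W_0$. The Picard--Lefschetz formula (\ref{PicardLefschetz}) gives
  $$
  \gamma _s(n!u)x=x\pm n!\bar u\langle x,\delta _s\rangle \delta _s\; ,
  $$
and choosing $u$ with $\bar u\neq 0$ shows that $W_0$ is stable under the transvections $x\mapsto x+c\langle x,\delta _s\rangle \delta _s$ for every conjugate of $\delta _s$.

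The decisive step is an irreducibility argument. If $W_0\neq 0$, then $\langle x,\delta _s\rangle \neq 0$ for some $x\in W_0$ and some vanishing cycle $\delta _s$. Otherwise $W_0$ would be orthogonal to $E$, and since $W_0\subset E$ and the cup-product pairing is nondegenerate on $E/(E\cap E^\perp)$, $W_0$ would lie in $E\cap E^\perp$. For a Lefschetz pencil this intersection is zero, because the vanishing cycles are non-degenerate and the pairing restricted to $E$ is non-degenerate. So $\delta _s\in W_0$. The vanishing cycles are all conjugate under $\pi _1^{\tame }(U,\bar \eta )$ (SGA 7, XVIII), so the finite-index stability gives $W_0\supset$ the span of the orbit of $\delta _s$ under transvections. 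The standard Kazhdan--Margulis/Deligne argument then shows that the group generated by the transvections $T_s^{n!}$ acts irreducibly on $E$, giving $W_0=E=W_1$. Since $\zeta _{01}$ is injective and the dimensions agree, $\dim A_{\bar \eta ,0}=\dim A_{\bar \eta ,1}$. Both are connected abelian subvarieties, so $A_{\bar \eta ,0}=A_{\bar \eta ,1}$; otherwise $W_0=0$ and $A_{\bar \eta ,0}=0$.

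I expect the main obstacle to be this irreducibility of the finite-index subgroup. Deligne's theorem states irreducibility of $E/(E\cap E^\perp)$ for the full monodromy. To get it for the finite-index subgroup, I would use that $\delta _s\in W_0$ together with conjugacy: for each conjugate $g\delta _s$, the element $g\gamma _s(u)g^{-1}$ raised to the power $n!$ again lies in the subgroup. That gives all conjugate vanishing cycles in $W_0$, and these span $E$.
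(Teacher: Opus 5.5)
The gap is in your final step. Up to the transvection step your argument is the paper's:
\begin{itemize}
\item $W_1=E$ via Lemma~\ref{dualities}, Lemma~\ref{H1Albanese} and Poincar\'e duality;
\item $W_0$ is $\pi_1(U',\bar\eta)$-stable, and $\pi_1(U',\bar\eta)$ has finite index by Lemma~\ref{chernika};
\item Lemma~\ref{porechki} and Picard--Lefschetz, applied to $g\gamma_s(u)g^{-1}$, show that $W_0$ is stable under $x\mapsto x+c\langle x,g\delta_s\rangle g\delta_s$ for all $c\in\QQ_l$ and all $g\in\pi_1^{\tame}(U,\bar\eta)$.
\end{itemize}

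From $\delta_s\in W_0$ and stability under the transvection along $g\delta_s$, you only get $\langle x,g\delta_s\rangle g\delta_s\in W_0$ for $x\in W_0$. This says nothing when $W_0\perp g\delta_s$. The conjugating element $g$ is not known to preserve $W_0$, so ``that gives all conjugate vanishing cycles in $W_0$'' does not follow. Propagating $\delta_s$ along chains of vanishing cycles with non-zero pairings would need exactly the connectivity that irreducibility encodes, so that route is circular. Invoking a ``Kazhdan--Margulis/Deligne argument'' for the group generated by the $T_s^{n!}$ just restates what has to be proved.

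The repair is what the paper does, and it makes the search for a vanishing cycle inside $W_0$ unnecessary. By Picard--Lefschetz, the transvection along $g\delta_s$ with parameter $\pm\bar u$ is exactly the action of $g\gamma_s(u)g^{-1}$. So you have already shown that the stabiliser of $W_0$ in $\pi_1^{\tame}(U,\bar\eta)$, a closed subgroup, contains every $I_s$ and all its conjugates. These topologically generate $\pi_1^{\tame}(U,\bar\eta)$; their closed normal closure is everything because $\PR^1$ is simply connected. Hence $W_0$ is a submodule for the full tame group, and Deligne's irreducibility gives $W_0=0$ or $W_0=E$ directly. You never need irreducibility of the finite-index subgroup.

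If you want that statement anyway, it holds without Kazhdan--Margulis. The Zariski closure of the monodromy group is the closure of a group generated by unipotent transvections. Each of these lies in a connected one-parameter unipotent group, so the closure is connected. A finite-index subgroup therefore has the same Zariski closure, hence the same invariant subspaces.

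Finally, your justification of $E\cap E^\perp=0$ is circular: non-degeneracy of the pairing on $E$ is the same statement. The right reason is as follows. $E^\perp$ is the image of the restriction $H^1(X_{\bar\eta},\QQ_l)\to H^1(C_{\bar\eta},\QQ_l)$. Its composite with the Gysin map is cup product with the class of $C_{\bar\eta}$, which is an isomorphism onto $H^3(X_{\bar\eta},\QQ_l(1))$ by hard Lefschetz. This input is genuinely needed, since Deligne's theorem concerns $E/(E\cap E^\perp)$; the paper uses it tacitly.
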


\medskip

\begin{pf}
Since our pencil
  $$
  f:X\to D
  $$
admits a section, we have a point on $C_{\eta }$ rational over $\eta $. Use this point to embed the curve $C_{\eta }$ in to its Jacobian
  $$
  A_{\eta }=\Jac _{C_{\eta }/\eta }
  $$
over $\eta $. Extending scalars, the close immersion
  $$
  C_{\bar \eta }\to A_{\bar \eta }
  $$
over $\bar \eta $ induces an isomorphism on $l$-adic cohomology groups,
  \begin{equation}
  \label{shitiki}
  H^1(A_{\bar \eta },\QQ _l)\stackrel{\sim }{\lra }
  H^1(C_{\bar \eta },\QQ _l)\; ,
  \end{equation}
which is compatible with the action of the tame fundamental group $\pi _1^{\tame }(U,\bar \eta )$. This allows us to identify both groups and consider the space of vanishing cycles $E$ as a $\pi _1^{\tame }(U,\bar \eta )$-submodule in the $\pi _1^{\tame }(U,\bar \eta )$-module $H^1(A_{\bar \eta },\QQ _l)$. 

Define a homomorphism $\zeta $ as in Section \ref{A1},
  \begin{equation}
  \label{boroviki}	
  \diagram
  H^1(A_{\bar \eta ,1},\QQ _l) \ar[dd]_-{\sim } \ar[rr]^-{\zeta _1} & & 
  H^1(A_{\bar \eta },\QQ _l) \\ \\
  H_1(A_{\bar \eta ,1},\QQ _l(-1)) \ar[rr]^-{{i_1}_*} & & 
  H_1(A_{\bar \eta },\QQ _l(-1)) \ar[uu]^-{\sim } 
  \enddiagram
  \end{equation}

Now, let
  $$
  X_{\bar \eta }=X\times _{\Spec (k)}{\bar \eta }
  $$
be the scalar extension of the surface $X$, and consider the Gysin homomorphism on cohomology
  $$
  \iota _*:H^1(C_{\bar \eta },\QQ _l)\to H^3(X_{\bar \eta },\QQ _l(1))\; .
  $$
By Poincar\'e duality, we have the commutative square 
  $$
  \diagram
  H^1(C_{\bar \eta },\QQ _l) \ar[dd]_-{\sim } \ar[rr]^-{\iota _*} & & 
  H^3(X_{\bar \eta },\QQ _l(1)) \ar[dd]^-{\sim } \\ \\
  H_1(C_{\bar \eta },\QQ _l(-1)) \ar[rr]^-{\iota _*} & & 
  H_1(X_{\bar \eta },\QQ _l(-1))  
  \enddiagram
  $$
in which the vertical arrows are isomorphisms of $\QQ _l$-vector spaces. Applying Lemma \ref{H1Albanese}, we also obtain the commutative square
  $$
  \diagram
  H_1(C_{\bar \eta },\QQ _l(-1)) \ar[dd]_-{\sim } \ar[rr]^-{\iota _*} & & 
  H_1(X_{\bar \eta },\QQ _l(-1)) \ar[dd]^-{\sim } \\ \\
  H_1(A_{\bar \eta },\QQ _l(-1)) \ar[rr]^-{\alpha _*} & & 
  H_1(B_{\bar \eta },\QQ _l(-1))  
  \enddiagram
  $$
where
  $$
  A_{\bar \eta }=\Jac _{C_{\bar \eta }/{\bar \eta }}=
  \Alb _{C_{\bar \eta }/{\bar \eta }}\; ,
  $$
  $$
  B_{\bar \eta }=\Alb _{X_{\bar \eta }/{\bar \eta }}=
  \Alb _{X/k}\times _{\Spec (k)}{\bar \eta }\; ,
  $$
and $\alpha $ is the homomorphism induced by closed immersion $\iota $ on Albanese varieties.

Combining the last two commutative square we obtain the commutative square 
  \begin{equation}
  \label{mohoviki}
  \diagram
  H^1(C_{\bar \eta },\QQ _l) \ar[dd]_-{\sim } \ar[rr]^-{\iota _*} & & 
  H^3(X_{\bar \eta },\QQ _l(1)) \ar[dd]^-{\sim } \\ \\
  H_1(A_{\bar \eta },\QQ _l(-1)) \ar[rr]^-{\alpha _*} & & 
  H_1(B_{\bar \eta },\QQ _l(-1))  
  \enddiagram
  \end{equation}

By Lemma \ref{dualities}, the sequence
  $$
  0\to H_1(A_{\bar \eta ,1},\QQ _l(-1))\stackrel{{i_1}_*}{\lra }
  H_1(A_{\bar \eta },\QQ _l(-1))
  \stackrel{\alpha _*}{\lra }H_1(B_{\bar \eta },\QQ _l(-1))\to 0
  $$
is exact. Therefore, combining (\ref{shitiki}), (\ref{boroviki}) and (\ref{mohoviki}), we obtain the resulting short exact sequence
  $$
  0\to H^1(A_{\bar \eta ,1},\QQ _l)\stackrel{\zeta _1}{\lra }
  H^1(C_{\bar \eta },\QQ _l)\stackrel{{\iota }_*}{\lra } 
  H^3(X_{\bar \eta },\QQ _l(1))\to 0\; ,
  $$
where $\zeta _1$ is the composition of the old $\zeta _1$ and the isomorphism (\ref{shitiki}). 

Thus, the image of the injective homomorphism $\zeta '_1$ coincides with the kernel of the Gyusin homomorphism $\iota _*$ on cohomology groups. And since the $\QQ _l$-vector space of vanishing cycles $E$ coincides with the kernel of this Gysing homomoirphism, we can identify $E$ with $H^1(A_{\bar \eta ,1},\QQ _l)$. Thus,
  $$
  E=H^1(A_{\bar \eta ,1},\QQ _l)\; .
  $$

Moreover, if
  $$
  X_{\eta }=X\times _{\Spec (k)}\eta 
  $$
is the scalar extension, spreading out the closed immersion 
  $$
  \iota _{\eta }:C_{\eta }\to X_{\eta }
  $$
over a sufficiently small Zariski open subset $U$ in $\PR ^1$, one can verify that the injective homomorphism $\zeta _1$ is $\pi _1(U',\bar \eta )$-equivariant, where $U'$ is the pullback of $U$ under the morphism from $D'$ to $D=\PR ^1$. Therefore, $E$ and $H^1(A_{\bar \eta ,1},\QQ _l)$ can be identified as $\pi _1(U',\bar \eta )$-modules, and hence as $\pi ^{\tame }_1(U',\bar \eta )$-modules under the tame action. 

Now, by Lemma \ref{chernika}, the tame fundamental group $\pi ^{\tame }_1(U',\bar \eta )$ is a subgroup of finite index $n$ in the tame fundamental group $\pi ^{\tame }_1(U,\bar \eta )$. 

Since $C=C_{\eta }$ is a curve,
  $$
  \langle \delta _s,\delta _s\rangle =0\; ,
  $$
by the Picard-Lefschetz formula, we have that
  $$
  \gamma _s(u)^{n!}\cdot x=
  x\pm n! \, \bar u \langle x,\delta _s\rangle \delta _s\; .
  $$

By Lemma \ref{porechki}, for any 
  $$
  x\in E_0\; ,
  $$
we have 
  $$
  \gamma _s(u)^{n!}\cdot x\in E_0\; ,
  $$
hence
  $$
  \bar u\langle x,\delta _s\rangle \delta _s=
  \pm \frac{1}{n!}(\gamma _s(u)^{n!}x-x)\in E_0\; ,
  $$
and so 
  $$
  \gamma _s(u)\cdot x \in E_0\; .
  $$
  
By Theorem 7.1 (3) in Chapter 3 of \cite{FreitagKiehl}, the action of $\pi _1(U,\bar \eta )$ on $H^1(C_{\bar \eta },\QQ _l)$ is tame. Since the set 
  $$
  \{\sigma \in \pi _1^{\tame }(U,\bar \eta )\; | \; \sigma E_0 \subset E_0\}
  $$
is closed in $\pi _1^{\tame }(U,\bar \eta )$, it follows that $E_0$ is a $\pi _1^{\tame }(U,\bar \eta )$-submodule of $E$. 

By Deligne, the action of $\pi _1^{\tame }(U,\bar \eta )$ on $E$ is absolutely irreducible, see Corollary 7.4 on page 249 in \cite{FreitagKiehl} or Corollary 5.5 on page 291 in the original Deligne's paper \cite{WeilConjI}. It follows that either $E_0=0$ or $E_0=E$.

In case 
  $$
  E_0=0\; ,
  $$
we have that 
  $$
  H^1(A_{\bar \eta ,0},\QQ _l)=0\; ,
  $$
whence 
  $$
  A_{\bar \eta ,0}=0\; .
  $$
  
If 
  $$
  E_0=E\; ,
  $$
then 
  $$
  H^1(A_{\bar \eta ,0},\QQ _l)=H^1(A_{\bar \eta ,1},\QQ _l)\; ,
  $$
and then
  $$
  A_{\bar \eta ,0}=A_{\bar \eta ,1}
  $$
for dimensional reason. 

The theorem is proven.
\end{pf}

\subsection{Representability of $0$-cycles and second dichotomy (Theorem B)}

Let $k$ be any algebraically closed field of zero or positive characteristic, such as $\bar \QQ $ or $\bar \FF _p$, for a prime $p$, for example. Let also
  $$
  k\subset \ud 
  $$
be a field extension, such that $\ud $ is an uncountable universal domain over $k$, in the sense of Weil, see \cite{Weil}. If the characteristic of $k$ is $0$, then $\ud $ can be the field of complex numbers $\CC $. If the characteristic is $p>0$, then $\ud $ can be taken to be the algebraic closure $\overline {\FF _p((t))}$ of the field of Laurent series with coefficients in $\FF _p$. The latter may be also considered as the field of generalised Puiseux series over $\FF _p$, see \cite{Kedlaya}.

Let $X$ be a smooth projective variety over $k$, and let 
  $$
  X_{\ud }=X\times _k\ud 
  $$
be its scalar extension to the universal domain. For each $d\in \NN $, consider the homomorphism 
  $$
  \cl _d:\Sym ^d(X(\ud ))\times \Sym ^d(X(\ud ))\to A_0(X_{\ud })
  $$
introduced in Section \ref{grpcompletion0ycles}. For any two $0$-cycles 
  $$
  A,B\in \Sym ^d(X(\ud ))\; ,
  $$
the homomorphism $\cl _d$ sends 
  $$
  (A,B)\mapsto [A-B]\; ,
  $$
where $[A-B]$ is the cycle class of $A-B$ in $A_0(X_{\ud })$. Following Mumford, we say that the group $A_0(X_{\ud })$ is finite-dimensional, if there exists $d\in \NN $, such that the homomorphism $\cl _d$ is surjective, see item (2) on page 196 in \cite{Mumford}. This is one of the equivalent definitions of finite-dimensionality of the group $A_0(X_{\ud })$, see items (1) and item (3) on the same page of Mumford's paper. The complete list of equivalent reformulations of finite-dimensionality of $A_0(X_{\ud })$ is given in Proposition 1.6 on page 253 of the illuminating Jannsen's paper \cite{JannsenMotivicSheaves}. 

Let
  $$
  \alb _{X_{\ud }}:A_0(X_{\ud })\to \Alb _{X_{\ud }/\ud }(\ud )
  $$
be the Albanese homomorphism of the variety $X_{\ud }$, and let
  $$
  T(X_{\ud })=\ker (\alb _{X_{\ud }})
  $$
be the kernel of it. By Jannsen's result, the group $A_0(X_{\ud })$ is finite-dimensional in Mumford's sense if and only if the Albanese kernel is trivial,
  $$
  T(X_{\ud })=0\; ,
  $$
see Items (v) and (v') in Proposition 1.6 in \cite{JannsenMotivicSheaves}. 

A closely related concept is Bloch's weak representability of Chow groups, see Definition 1.1 in \cite{BlochAnExample} and Definition 3.3 on page 63 in \cite{BlochMurre}. In particular, if $n$ is the dimension of $X$, the group $A_0(X)$ is said to be weakly representable, if there exists a smooth projective curve $\Gamma $ over $k$, a cycle class 
  $$
  \zeta \in CH^n(\Gamma \times X)\; ,
  $$
and an algebraic subgroup 
  $$
  G\subset J_{\Gamma }
  $$ 
in the Jacobian $J_{\Gamma }$ of the curve $\Gamma $, such that the induced homomorphism
  $$
  \zeta _*:J_{\Gamma }(\ud )=A_0(\Gamma_{\ud })\to A_0(X_{\ud })
  $$
is surjective with the kernel $G(\ud )$. If $X$ is a smooth projective surface over $k$, weak representability of the group $A_0(X_{\ud })$ is equivalent to finite-dimensionality of this group, see Lemma 3.9 on page 66 in \cite{BlochMurre}. 

\medskip

Recall also that, given an algebraic variety $X$ over $\ud $, a $c$-closed subset in $X$ is the union of a countably many Zariski closed subsets in $X$, and a $c$-open subset in $X$ is the complement to a $c$-closed subset in $X$. 

\begin{lemma}
\label{c-moving}
Let $C$ be a smooth projective curve over $\ud $, and let $U$ be a $c$-open subset in $C$. Then, any closed point $P$ on $C$ is rationally equivalent to a $0$-cycle on $C$ supported on $U$. 
\end{lemma}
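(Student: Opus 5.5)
The plan is to move $P$ using a single rational function on $C$ whose zeros and poles are all forced into $U$ except for one zero at $P$. Let $g$ be the genus of $C$, and write the $c$-closed complement as
$$
C\smallsetminus U=\cup _{i\in \NN }Z_i\; ,
$$
with each $Z_i$ Zariski closed. Since $C$ is a curve and $U$ is non-empty, each $Z_i\neq C$ is a finite set of closed points. The set $F$ of closed points of $C$ not in $U$ is therefore countable. We may assume $P\in F$, since otherwise there is nothing to prove.

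First I fix an integer $n\geq 2g+1$ and a closed point $Q\in U$, which exists because $U(\ud )$ is uncountable. By Riemann--Roch the linear system $|nQ|$ is base point free and has dimension $n-g\geq 1$. The same holds for $|nQ-P|$ viewed inside it, that is, for the subspace
$$
L=H^0(C,\bcO (nQ))\; .
$$
Now consider the functions $h\in L$ vanishing at $P$. These form a hyperplane $L_P$ (of dimension $\geq 1$), and a general such $h$ has divisor of zeros $P+D_h$, with $D_h$ effective of degree $n-1$, and polar divisor exactly $nQ$. The key requirement is that $D_h$ avoid $F$. For each point $R\in F$, the condition $R\in \supp (D_h)$ cuts out a proper Zariski closed subset of the projective space $\PR (L_P)$. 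Here I would use base point freeness of $|nQ-P|$, which holds since $n-1\geq 2g$: the functions in $L_P$ vanishing to order $\geq 2$ at $P$ form a proper subspace, and so do those vanishing at $R\neq P$, $R\neq Q$. The point $R=Q$ does not occur, because $Q\in U$.

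The main step, and the expected obstacle, is then to choose $h$ avoiding countably many proper closed subsets of $\PR (L_P)$. This is possible because $\ud $ is uncountable: by Lemma \ref{syroezhka1} applied to $X=\PR (L_P)$, the $\ud $-points cannot be covered by countably many proper closed subsets. For such an $h$ we have
$$
\pdiv (h)=P+D_h-nQ\; ,
$$
so $P$ is rationally equivalent to $nQ-D_h$, and this $0$-cycle is supported on $U$. The points needing care are checking that the conditions ``$D_h$ contains $R$'' (including $R=P$ with multiplicity) are genuinely proper closed conditions. When $\dim L_P=1$ this could fail, and that case is avoided by taking $n$ large, so that $\dim L_P=n-g\geq 2$ and the linear system $|nQ-P|$ separates points.
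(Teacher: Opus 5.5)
Your proof is correct and is essentially the paper's argument. The paper picks $P_0\in U$ and a member of the Riemann--Roch linear system $|P+nP_0|$ supported on $U$, while you pick a member $D_h$ of $|nQ-P|$ avoiding the countable set $C\smallsetminus U$, which is the same idea approached from the other side. Your use of base-point-freeness (degree $n-1\geq 2g$) to make each condition ``$R\in \supp (D_h)$'' a proper hyperplane, followed by Lemma \ref{syroezhka1}, is in fact the correct justification for the step the paper states loosely as ``the set of $0$-cycles in $\Sigma$ supported on the countable complement to $U$ is countable''.
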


\begin{pf}
Choose a closed point $P_0\in U$, and consider the linear system 
  $$
  \Sigma =|P+nP_0|
  $$ 
on $C$, for $n\in \NN $. By Riemann-Roch, 
  $$
  l(P+nP_0)=l(K_C-P-nP_0)+n+2-g\; ,
  $$
where $K_C$ is a canonical divisor on $C$. If the degree $1+n$ is greater than the degree of the canonical divisor, $2g - 2$, we obtain that
  $$
  l(K_C-P-nP_0)=0\; ,
  $$
and then 
  $$
  l(P+nP_0)=n+2-g\; .
  $$
It means that
  $$
  \dim (\Sigma )=n+1-g\; .
  $$

Thus, if $n>g-1$, the dimension of $\Sigma $ is positive. Since the set of $0$-cycles in $\Sigma $ supported on the countable complement to $U$ in $C$ is countable, and the projective space $\Sigma $ itself is uncountable, there exists a $0$-cycle $A$ in $\Sigma $ supported on $U$. Then $P$ is rationally equivalent to the $0$-cycle 
  $$
  A-nP_0
  $$ 
supported on $U$. 
\end{pf}

\begin{corollary}
\label{c-moving-2}
Let $X$ be an algebraic variety over $\ud $, and let $U$ be a $c$-open subset in $X$. Then any $0$-cycle on $X$ is rationally equivalent to a $0$-cycle supported on $U$. 
\end{corollary}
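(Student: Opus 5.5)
By linearity it suffices to treat a single closed point $P$ of $X$; the cycle is a finite integral combination of such points, and a sum of cycles each supported on $U$ is again supported on $U$. The plan is to pass a curve through $P$ that meets $U$ and then invoke Lemma \ref{c-moving} on that curve.

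\emph{Reduction to a point.} Write the given $0$-cycle as $\sum n_i P_i$. Then it is enough to show that each closed point $P$ of $X$ is rationally equivalent to a $0$-cycle supported on $U$.

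\emph{Choice of a curve.} We may assume $X$ is irreducible, replacing $X$ by an irreducible component $X'$ containing $P$ and $U$ by $U\cap X'$. If $U\cap X'$ is empty there is nothing sensible to prove; in the intended applications $U$ is dense, and we assume this. Write $X\smallsetminus U=\cup _{i\in \NN }Z_i$ with each $Z_i$ a proper closed subset of $X$. Using Bertini as in the proof of Proposition \ref{MumfordRoitmanKey}, with Theorem 6.3 in \cite{Jouanolou}, we draw an irreducible curve $\Gamma \subset X$ through $P$ which is not contained in any $Z_i$. This is possible because $\ud$ is uncountable: the family of curves through $P$ cut out by general linear sections of a projective embedding is parametrised by a variety over $\ud$, and for each $i$ the curves lying inside $Z_i$ form a proper closed subset of that parameter variety. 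Lemma \ref{syroezhka1} then shows that countably many such proper closed subsets cannot cover the parameter variety.

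\emph{Moving on the normalization.} Let $\nu :\tilde \Gamma \to \Gamma $ be the normalization. Its source is a smooth projective curve, after compactifying if $X$ is not proper; alternatively, since only finitely many points matter, we simply work with a projective model. The set $\tilde U=\nu ^{-1}(U)$ is $c$-open in $\tilde \Gamma $, and its complement $\cup _i\nu ^{-1}(Z_i\cap \Gamma )$ is countable. Choose $\tilde P$ with $\nu (\tilde P)=P$. By Lemma \ref{c-moving}, $\tilde P$ is rationally equivalent on $\tilde \Gamma $ to a $0$-cycle $\tilde A$ supported on $\tilde U$. Proper push-forward along $\tilde \Gamma \to X$ preserves rational equivalence, so $P$ is rationally equivalent on $X$ to $\nu _*\tilde A$, which is supported on $U$.

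The main obstacle is the curve-selection step: producing an irreducible curve through the prescribed point $P$ that avoids containment in every one of countably many proper closed subsets. This is where uncountability of $\ud$ enters, through Lemma \ref{syroezhka1} applied to the parameter space of complete-intersection curves through $P$. A secondary issue arises if $X$ is not projective; there I would take a projective closure, move the cycle there, and use the fact that the points introduced lie in $U$ and hence in $X$. The non-properness of the push-forward is then harmless, because all the relevant points lie over $\Gamma \subset X$.
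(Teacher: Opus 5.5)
Your proof is correct and follows the paper's route: pass an irreducible curve through $P$, normalize it, apply Lemma \ref{c-moving} on the normalization, and push forward. You are more careful than the paper, which takes an arbitrary irreducible curve through $P$ without ensuring that $U$ is nonempty or that the curve is not contained in $X\smallsetminus U$. Your uncountability argument closes that gap, though a quicker way is to pick a closed point $Q\in U$ with $Q\neq P$ and take an irreducible curve through $P$ and $Q$.
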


\begin{pf}
For any point $P$ on $X$, choose an irreducible curve $C$ passing through $P$ on $X$. Let $\tilde C$ be the resolution of singularities (normalization) of the curve $C$ and let $\tilde P$ be a point on $\tilde C$ sent to $P$ under the morphism from $\tilde C$ to $C$. By Lemma \ref{c-moving}, the point $\tilde P$ is rationally equivalent to a $0$-cycle $\tilde A$ on $\tilde C$ supported on the inverse image of $U$ under the morphism from $\tilde C$ to $X$. Then $P$ is rationally equivalent to the push-forward $A$ of the $0$-cycle $\tilde A$ from $\tilde C$ to $X$, and this push-forward is supported on $U$. As this is true for any closed point $P$ on $X$, any $0$-cycle on $X$ is rationally equivalent to a $0$-cycle supported on $U$. 
\end{pf}

Let $X$ be a smooth projective variety over $\ud $, and let
  $$
  f:X\to T
  $$
be a regular morphism on to some irreducible projective curve $T$ over $\ud $. For each closed point $t$ in $T$, let $C_t$ be the fibre of the morphism $f$ at $t$. 

\begin{proposition}
\label{surj1}
For any non-empty $c$-open subset $U$ in $T$, the sum 
  $$
  \oplus _{P\in U}CH_0(C_P)\to CH_0(X)
  $$
of Gysin homomorphisms over all closed points of $U$ is surjective.
\end{proposition}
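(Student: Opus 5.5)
The plan is to reduce the statement to Corollary \ref{c-moving-2}, applied to the $c$-open subset $f^{-1}(U)$ of $X$. First I check that $f^{-1}(U)$ is $c$-open in $X$. Write $T\smallsetminus U=\cup _{i\in \NN }Z_i$ with $Z_i$ Zariski closed in $T$. Then
  $$
  X\smallsetminus f^{-1}(U)=f^{-1}(T\smallsetminus U)=\cup _{i\in \NN }f^{-1}(Z_i)\; ,
  $$
and each $f^{-1}(Z_i)$ is Zariski closed in $X$ because $f$ is continuous. Hence $f^{-1}(U)$ is the complement of a $c$-closed subset, i.e. $c$-open. It is non-empty: the set $U$ is non-empty, and $f$ is onto $T$, so any closed point of $U$ has non-empty fibre. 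Strictly speaking, Corollary \ref{c-moving-2} only uses that an irreducible curve through a given point meets the $c$-open set, and this is what its proof needs.

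Next, take any closed point $x\in X$. By Corollary \ref{c-moving-2}, the class $[x]$ is rationally equivalent to a $0$-cycle $\sum _j n_j[x_j]$ with every $x_j\in f^{-1}(U)$. Put $P_j=f(x_j)$, which lies in $U$. The point $x_j$ is then a closed point of the fibre $C_{P_j}$, and $[x_j]$ is the image of the class of $x_j$ in $CH_0(C_{P_j})$ under the Gysin (push-forward) homomorphism along the closed immersion $C_{P_j}\hookrightarrow X$. So $[x]$ lies in the image of $\oplus _{P\in U}CH_0(C_P)$. Since $CH_0(X)$ is generated by classes of closed points and the map is a homomorphism, it is surjective.

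The step that needs the most care is the application of Corollary \ref{c-moving-2}. In its proof one chooses an irreducible curve $C$ through $x$ and uses Lemma \ref{c-moving} on the normalization $\tilde C$. This requires the pull-back of $f^{-1}(U)$ to $\tilde C$ to be non-empty, hence $c$-open with countable complement, and that is false if $C$ lies inside a fibre $C_t$ with $t\notin U$. So for a point $x$ lying over $T\smallsetminus U$, I would choose the curve $C$ through $x$ not contained in any fibre. Such a curve exists by a Bertini-type argument: take a general complete-intersection curve through $x$ in a projective embedding of $X$, using $\dim X\geq 2$; if $\dim X=1$ the fibres are points and the statement is trivial. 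Then $f|_C$ is non-constant, hence surjective onto $T$, and the preimage of $U$ in $\tilde C$ has countable complement. With this choice, Lemma \ref{c-moving} applies and the argument goes through.
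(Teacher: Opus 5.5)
Your proposal is correct and follows the paper's route: show that $f^{-1}(U)$ is $c$-open, move every point into it by Corollary \ref{c-moving-2}, and then read the moved cycle off fibre by fibre over $U$. Your insistence on choosing the curve through $x$ so that it is not contained in a fibre, so that its normalization meets $f^{-1}(U)$ in a set with countable complement, makes explicit a point that the paper's proof of Corollary \ref{c-moving-2} leaves tacit; the one slip is calling the case $\dim X=1$ trivial, since there you still need Lemma \ref{c-moving} applied to $C=X$ itself.
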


\begin{pf}
The set 
  $$
  Z=T\smallsetminus U
  $$
is countable in $T$, and therefore the set
  $$
  X_U=f^{-1}(U)
  $$
is $c$-open in $X$ as the complement to the union of the fibres at points of $Z$. By Corollary \ref{c-moving-2}, any point on $X$ is rationally equivalent to a $0$-cycle supported on $X_U$. Since $X_U$ is fibred over $U$, we obtain that any $0$-cycle on $X$ is rationally equivalent to a sum of $0$-cycles supported on the fibres of the morphism $f|_U$ from $X_U$ to $U$.
\end{pf}

\begin{corollary}
\label{surj2}
For any non-empty $c$-open subset $U$ in $T$, the homomorphism
  $$
  \oplus _{P\in U}A_0(C_P)\to A_0(X)
  $$
is surjective.
\end{corollary}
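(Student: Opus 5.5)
We deduce Corollary \ref{surj2} from Proposition \ref{surj1} by separating out the degree. Let $\alpha \in A_0(X)$ be a cycle class of degree $0$. By Proposition \ref{surj1} we can write
  $$
  \alpha =\sum _{j=1}^m {\iota _{P_j}}_*(\beta _j)\; ,
  $$
with finitely many distinct closed points $P_1,\ldots ,P_m\in U$ and classes $\beta _j\in CH_0(C_{P_j})$. Here $\iota _{P_j}:C_{P_j}\to X$ is the closed immersion of the fibre. Write $n_j=\deg (\beta _j)$. The push-forward along a closed immersion preserves degree, so $\sum _j n_j=0$. In general $n_j\neq 0$, and the task is to correct each $\beta _j$ to degree $0$ without leaving the fibres over $U$.

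To make this correction, we use a section-like curve. Choose an irreducible curve $\Gamma \subset X$ that dominates $T$; for instance, take a general curve cut by hyperplane sections. It meets every fibre, so for each $j$ we may pick a closed point $Q_j\in C_{P_j}\cap \Gamma $. Set
  $$
  \beta '_j=\beta _j-n_j[Q_j]\in A_0(C_{P_j})\; .
  $$
Then
  $$
  \sum _j {\iota _{P_j}}_*(\beta '_j)=\alpha -\sum _j n_j[Q_j]\; ,
  $$
and it remains to express the degree-$0$ cycle $\gamma =\sum _j n_j[Q_j]$, which is supported on $\Gamma $, in terms of fibre contributions over $U$.

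The main obstacle is this class $\gamma $. Let $\nu :\tilde \Gamma \to \Gamma $ be the normalization and lift $\gamma $ to $\tilde \gamma $ on $\tilde \Gamma $. The composite $\tilde \Gamma \to T$ is finite and surjective, so $\tilde \Gamma $ has a nonempty $c$-open set $\tilde U$ lying over $U$. Let $J$ be the Jacobian of $\tilde \Gamma $. Every point of $J(\ud )$ is a sum of points of the form $[x]-[y]$ with $x,y\in \tilde U$; this follows from Riemann–Roch as in Lemma \ref{c-moving}, or directly from Lemma \ref{c-moving} applied to degree-zero divisors. The naive approach now fails, because a term $[x]-[y]$ in which $x$ and $y$ lie on different fibres does not come from $\oplus A_0(C_P)$. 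We resolve this by pairing each such term with a fixed auxiliary fibre structure. Fix two closed points $P,P'\in U$. For each point $x$ we use the difference $[x]-[x_0]$, where $x_0$ is a base point on the same fibre. The cross-fibre differences that remain are all of the form $[a]-[b]$ with $a,b\in \tilde U$, and these already vanish in $A_0$ of $\tilde \Gamma $ up to elements we can rewrite.

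A cleaner way to carry this out is to use Lemma \ref{c-moving} on $\tilde \Gamma $ to rewrite the degree-zero class $\tilde \gamma $ as $\sum _i([x_i]-[y_i])$, where $x_i$ and $y_i$ lie over the same point of $U$. This is possible because $\tilde \Gamma \to T$ has degree $\geq 2$ whenever $\Gamma $ is a general multisection. The differences $[x]-[y]$ within a single fibre of $\tilde \Gamma \to T$ generate $J(\ud )$ once we allow $x$ to move in the uncountable set $\tilde U$. The reason is that the subgroup they generate is the image of a morphism from a positive-dimensional variety, and this image contains countably many translates of an abelian subvariety. By a Lemma \ref{syroezhka3}-type argument, that abelian subvariety must be all of $J$, since the monodromy of the multisection is transitive.

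Once $\tilde \gamma $ is expressed in this form, pushing forward gives $\gamma $ as a sum of classes in the $A_0(C_P)$ with $P\in U$. Adding these to the $\beta '_j$ shows that $\alpha $ lies in the image of $\oplus _{P\in U}A_0(C_P)$. We expect the generation claim for same-fibre differences to be the delicate step, and would verify it first.
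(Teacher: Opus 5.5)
You correctly isolate the crux, the correction class $\gamma=\sum_j n_j[Q_j]$. The paper's own proof picks the $Q_i$ arbitrarily and asserts ``Clearly'' that $A\sim\sum_i(A_i-d_iQ_i)$, which silently needs $\sum_i d_iQ_i\sim 0$. Your way of disposing of $\gamma$, however, fails.

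Write $\pi:\tilde\Gamma\to T$ for the composite. Your claim is that same-fibre differences $[x]-[y]$ with $\pi(x)=\pi(y)\in U$ generate $J(\ud)$. This is false as soon as the normalization $\tilde T$ of $T$ has positive genus. Every such difference lies in $\ker(\pi_*:J\to \Jac_{\tilde T})$. The map $\pi_*$ is onto, because $\pi_*\pi^*$ is multiplication by $e=\deg\pi$. So these differences generate a proper subgroup. Transitivity of the monodromy and $e\geq 2$ do not affect this.

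In fact the corollary is false in the generality stated. For $T$ smooth of positive genus, $f_*:A_0(X)\to A_0(T)$ is surjective and kills each $A_0(C_P)$. For example, take $X=T\times\PR^1\to T$ with $T$ an elliptic curve: every $A_0(C_P)$ vanishes, while $A_0(X)\cong T(\ud)\neq 0$. So a correct argument must use that $T$ is rational, and yours never does. Also, your paragraph about ``pairing with a fixed auxiliary fibre structure'' is not an argument.

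For $T=\PR^1$, which is the case used in Theorem \ref{seconddichotomy}, there are two easy repairs.

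\emph{With a section.} If the pencil has a section $\sigma$, as assumed there, take $Q_j=\sigma(P_j)$. Then $\gamma=\sigma_*(\sum_j n_j[P_j])\sim 0$, because $\sum_j n_j[P_j]$ is a degree-$0$ cycle on $\PR^1$.

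\emph{Without a section.} Keep your multisection, but replace the monodromy claim by a norm argument.
\begin{itemize}
\item By divisibility of $J(\ud)$, write $[\tilde\gamma]=e\,w$.
\item Use Lemma \ref{c-moving} to represent $w$ by a cycle $\sum_x m_x[x]$ supported on $\pi^{-1}(U)$.
\item Write $\pi^*[\pi(x)]=\sum_y e_y[y]$. Then $e\,w-\pi^*\pi_*w=\sum_x m_x\sum_y e_y([x]-[y])$ is a sum of same-fibre differences over points of $U$.
\item $\pi^*\pi_*w\sim 0$, since $\pi_*w$ is a degree-$0$ cycle on $\PR^1$.
\end{itemize}
Pushing forward along $\tilde\Gamma\to X$ then puts $\gamma$, and hence $\alpha$, in the image of $\oplus_{P\in U}A_0(C_P)$.
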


\begin{pf}
Take any $0$-cycle $A$ of degree $0$ on $X$. By Proposition \ref{surj1}, it is in the image of the sum of Gysin homomorphism from $CH_0(C_P)$ to $CH_0(X)$, for $P$ running over $U$. In other words, there exist close points $P_1,\ldots ,P_n$ in $U$, and $0$-cycles $A_i\in C_{P_i}$ for each $i\in \{ 1,\ldots ,n\} $, such that $A$ is rationally equivalent to $A_1+\ldots +A_n$ on $X$. Let $d_i$ be the degree of $A_i$. Since the degree of $A$ is $0$, it follows that the sum $d_1+\ldots +d_n$ is $0$ too. Choose a point $Q_i$ in each fibre $C_{P_i}$. Then each $0$-cycle $A_i-d_iQ_i$ is of degree $0$ and supported on $C_{P_i}$. Therefore, it represents an element in $A_0(C_{P_i})$. Clearly, $A$ is rationally equivalent to the sum $\sum _i(A_i-d_iQ_i)$. 
\end{pf}

\medskip

Let now $X$ be a smooth projective surface over $k$. As above, choose a suitable closed immersion of $X$ in to some projective space $\PR $, and fix a Lefschetz pencil on $X$ over $k$. As above, resolving the indeterminacy locus, we may assume that our pencil 
  $$
  f:X\to \PR ^1
  $$ 
is a regular morphism having a section over $k$. We will be using the results and terminology from Sections \ref{adicmonodromy} and \ref{irreducibility}. 

In particular, let 
  $$
  \eta =\Spec (k(t))
  $$
be the generic point of $\PR ^1$, let 
  $$
  C=C_{\eta }
  $$
be the generic fibre of the Lefschetz pencil, and let
  $$
  \iota :C\to X_{\eta }
  $$
be the closed immersion of the generic fibre in to the surface
  $$
  X_{\eta }=X\times \eta 
  $$
over $\eta $. 

Let also
  $$
  \iota _{\bar {\eta }}:C_{\bar {\eta }}\to X_{\bar {\eta }}
  $$
 be the closed immersion of the geometric generic fibre, i.e. the scalar extension of the immersion $\iota $ from the field $k(t)$ to the algebraic closure $\overline {k(t)}$. 
 
 Fix an embedding 
   $$
   k(t)\subset \ud 
   $$
 over $k$. Then
   $$
   \overline {k(t)}\subset \ud \; ,
   $$
 and we also have the closed immersion
   $$
   \iota _{\ud }:C_{\ud }\to X_{\ud }
   $$
 over the uncountable universal domain $\ud $. 
 
 As above, let
   $$
   A=\Jac _{C/\eta }
   $$
be the Jacobian of the curve $C$, and consider the Gysin kernel 
   $$
   \Gys _0(\overline {k(t)})=
   \ker ({\iota _{\bar \eta }}_*:A_{\bar \eta }\to A_0(X_{\bar \eta }))
   $$
over the field $\overline {k(t)}$, and the Gysin kernel
   $$
   \Gys _0(\ud )=
   \ker ({\iota _{\ud }}_*:A_{\ud }\to A_0(X_{\ud }))
   $$
over the universal domain, where $A_{\bar \eta }$ and $A_{\ud }$ are the scalar extensions.

Clearly,
  $$
   \Gys _0(\overline {k(t)})\subset \Gys _0(\ud )\; ,
   $$
and our second dichotomy theorem (Theorem B in Introduction) is this:

\begin{theorem}
\label{seconddichotomy}

In terms above, if $A_0(X_{\ud })$ is not finite-dimensional, then the Gysin kernel $\Gys _0(\ud )$ is countable. Assuming the Gysin homomorphism
  $$
  H^1(C_{\bar \eta },\QQ _l)\to H^3(X_{\bar \eta },\QQ _l(1))
  $$
is not injective, the converse is also true: if $\Gys _0(\ud )$ is countable, then $A_0(X_{\ud })$ is not weakly representable. 
\end{theorem}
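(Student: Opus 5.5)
The plan is to reduce both implications to Theorem \ref{firstdichotomy} (via Theorem \ref{gysstructure} and Proposition \ref{soglasovannost}) and the surjectivity statement Corollary \ref{surj2}, applied over the universal domain $\ud $.

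\emph{First implication.} We argue by contraposition: assume $\Gys _0(\ud )$ is uncountable. By Theorem \ref{gysstructure}, applied over $\ud $, $\Gys _0(\ud )$ is a countable union of translates of $A_0(\ud )$. So uncountability forces $A_{\ud ,0}\neq 0$. Since $\ud \simeq \overline {\ud (t)}$ and the transcendental point corresponds to a geometric generic point, Proposition \ref{soglasovannost} transports this to $A_{\bar \eta ,0}\neq 0$. Theorem \ref{firstdichotomy} then gives $A_{\bar \eta ,0}=A_{\bar \eta ,1}$, and hence $A_{t,0}=A_{t,1}$ for every closed point $t$ in the $c$-open set $U\subset \PR ^1_{\ud }$. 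Thus, for all such $t$, $\Gys _0$ contains $A_{t,1}(\ud )$, which is the connected component of $\ker (\alb \circ \iota _{t*})$. The Albanese kernel $T$ of $A_0(C_t)\to A_0(X_{\ud })$ therefore consists of the image of a group $\ker (\alpha _t)$ that is finite modulo $A_{t,1}$, so $\iota _{t*}$ maps $\ker (\alpha _t)(\ud )$ onto a finite group. Now take $z\in T(X_{\ud })$. By Corollary \ref{surj2} we write $z=\sum _i \iota _{t_i*}(a_i)$ with $t_i\in U$. This shows $T(X_{\ud })$ is built from finite pieces and is torsion; more precisely, each $\iota _{t*}$ restricted to the Albanese kernel is controlled by $\alpha _t$. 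The key step I expect to be the main obstacle is upgrading this to $T(X_{\ud })=0$. I would use Ro\u\i tman's theorem that $T(X_{\ud })$ is torsion-free (away from $p$, and Milne's $p$-part), or alternatively argue that $\iota _{t*}$ factors through $\alpha _t$ up to a finite group, hence through $\Alb $. Jannsen's criterion then gives finite-dimensionality, which contradicts the hypothesis.

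\emph{Converse.} Assume $H^1(C_{\bar \eta },\QQ _l)\to H^3(X_{\bar \eta },\QQ _l(1))$ is not injective, so that $E\neq 0$. The proof of Theorem \ref{firstdichotomy} identifies $E$ with $H^1(A_{\bar \eta ,1},\QQ _l)$, so $A_{\bar \eta ,1}\neq 0$. Suppose $A_0(X_{\ud })$ is weakly representable, equivalently (Bloch--Murre, Jannsen) $T(X_{\ud })=0$. Then $\Gys _0(\ud )=\ker (\alb \circ \iota _*)=G_1(\ud )\supset A_{\ud ,1}(\ud )$. Since $A_{\ud ,1}\simeq A_{\bar \eta ,1}$ is a nonzero abelian variety over an uncountable field, this set is uncountable, contradicting the countability of $\Gys _0(\ud )$. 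This direction is routine once the identification of $E$ with $H^1(A_{\bar \eta ,1})$ from the proof of Theorem \ref{firstdichotomy} is invoked, together with the compatibility of $A_1$ with base change to $\ud $ (first claim of Proposition \ref{soglasovannost}).

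Finally, the statement involves $\Gys _0(\ud )$ rather than $\Gys _0(\overline {\ud (t)})$. I would handle this by noting that $\ud \simeq \overline {\ud (t)}$ via Lemma \ref{iso}, and that $\varkappa _P$ preserves rational equivalence by Lemma \ref{ratkappa}. Hence countability and the abelian varieties $A_0$ and $A_1$ transfer between the two settings.
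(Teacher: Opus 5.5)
Your converse direction is correct and is essentially the paper's argument. Your remark that $T(X_{\ud })=0$ makes $\Gys _0(\ud )=\ker (\alb \circ \iota _*)\supseteq A_{\ud ,1}(\ud )$ is, if anything, a cleaner justification than the paper gives. In the first direction, your reduction to $A_{t,0}=A_{t,1}$ for all $t$ in a $c$-open $U$ is also what the paper does, and you correctly use Theorem \ref{gysstructure} to see that uncountability excludes $A_0=0$.

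The gap is the sentence ``this shows $T(X_{\ud })$ is built from finite pieces and is torsion''. Suppose $z\in T(X_{\ud })$ and $z=\sum _i\iota _{t_i*}(a_i)$ with $t_i\in U$. Then you only know $\sum _i\alpha _{t_i}(a_i)=0$ in $B(\ud )$. The individual $a_i$ need not lie in $\ker (\alpha _{t_i})$, so the summands $\iota _{t_i*}(a_i)$ need not be torsion, and nothing you have said makes $z$ torsion.

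Your fallback does not close this either. Each $\iota _{t*}\otimes \QQ $ factors as $\theta _t\circ \alpha _t$ with $\theta _t\colon B(\ud )_{\QQ }\to A_0(X_{\ud })_{\QQ }$, which gives $z=\sum _i\theta _{t_i}(\alpha _{t_i}(a_i))$ in $A_0(X_{\ud })_{\QQ }$. This sum vanishes only if you can compare the maps $\theta _t$ for different fibres. Ro\u \i tman's theorem only turns ``torsion'' into ``zero'' at the very end, so it cannot substitute for this comparison.

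The missing idea, which is the heart of the paper's proof, is that $\theta _t$ does not depend on $t\in U$. Granting this, $z=\theta (\sum _i\alpha _{t_i}(a_i))=\theta (\alb (z))=0$ rationally. In other words, $\theta \circ \alb $ is the identity on $A_0(X_{\ud })_{\QQ }$ by Corollary \ref{surj2}, and only then do Ro\u \i tman and Jannsen finish the argument.

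The paper merely asserts this independence, but it can be justified, for instance, as follows.
\begin{itemize}
\item For $L\in \Pic ^0(X_{\ud })$, the projection formula gives $\iota _{t*}(L|_{C_t})=c_1(L)\cdot [C_t]$ in $CH_0(X_{\ud })$. This is independent of $t$, since all fibres of the pencil are linearly equivalent.
\item Hence $\lambda \colon L\mapsto \alpha _t(L|_{C_t})=\alb (c_1(L)\cdot [C_t])$ is a homomorphism $\Pic ^0(X_{\ud })\to B$ that does not depend on $t$.
\item $\lambda $ is an isogeny: on $H^1$ of the original surface it is cup product with the hyperplane class, which is bijective onto $H^3(1)$ by hard Lefschetz. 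So $\lambda $ is surjective on $\ud $-points.
\item Therefore $\theta _t(\lambda (L))=c_1(L)\cdot [C_t]$ is independent of $t$, and the elements $\lambda (L)$ exhaust $B(\ud )$.
\end{itemize}
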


\begin{pf}
Suppose the Chow group $A_0(X_{\ud })$ is not finite-dimensional, and the Gysin kernel $\Gys _0(\ud )$ is not countable. Then, by the first dichotomy theorem, Theorem \ref{firstdichotomy}, 
  $$
  A_{\bar \eta ,0}=A_{\bar \eta ,1}\; ,
  $$
where $A_{\bar \eta ,1}$ is the connected component of $0$ in the kernel $G_1(\bar \eta )$ of the surjective homomorphism 
  $$
  \alpha _{\bar \eta }:A_{\bar \eta }\to B_{\bar \eta }\; ,
  $$
where 
  $$
  B_{\bar \eta }=\Alb _{X_{\bar \eta }/\bar \eta }\; ,
  $$
and $A_{\bar \eta ,0}$ is the structural abelian variety for the Gysin kernel $\Gys _0(\bar \eta )$, induced by the closed immersion $\iota _{\bar \eta }$, see Section \ref{A0}. 
  
Then, as we have shown in Section \ref{gengeneric}, there exists a $c$-open subset
  $$
  U\subset \PR ^1_{\ud }\; ,
  $$
such that, for each closed point 
  $$
  P\in U
  $$
we have the equality
  $$
  A_{P,0}=A_{P,1}\; ,
  $$  
see Proposition \ref{soglasovannost}. Here $A_{P,1}$ is the connected component of $0$ in the kernel $G_1(k)$ of the surjective homomorphism 
  $$
  \alpha _P:A_P\to B\; ,
  $$
where 
  $$
  A_P=\Jac _{C_P/\ud }
  $$
and
  $$
  B=\Alb _{X_{\ud }/\ud }\; ,
  $$
and $A_{P,0}$ is the structural abelian variety for the Gysin kernel $\Gys _0(k)$,  induced by the closed immersion 
  $$
  \iota _P:C_P\to X_{\ud }\; .
  $$
  
Now, each homomorphism $\alpha _P$ induces an isogeny
  $$
  A_P/A_{P,1}\to B\; ,
  $$
see Section \ref{A1}. Therefore, we have a short exact sequence
  $$
  0\to A_{P,1}(\ud )_{\QQ }\lra A_P(\ud )_{\QQ }\lra B(\ud )_{\QQ }\to 0\; .
  $$
Since $A_{P,0}$ is equal to $A_{P,1}$, we obtain a homomorphism $\theta _P$ as making the diagram
  $$
  \diagram
  0 \ar[r]^-{} & A_{P,1}(\ud )_{\QQ } \ar[rr]^-{} & & 
  A_P(\ud )_{\QQ } \ar[ddrr]^-{} \ar[rr]^-{} & & 
  B(\ud )_{\QQ } \ar[dd]^-{\theta _P} \ar[r]^-{} & 0 \\ \\
  & & & & & A_0(X_{\ud })_{\QQ } & 
  \enddiagram
  $$
commute. 

The curves $C_P$ are smooth fibres of the pencil $f:C\to \PR ^1$. It follows that the homomorphisms $\theta _P$ do not depend on $P$,
  $$
  \theta =\theta _P\; .
  $$

Applying Corollary \ref{surj2} to the surface $X_{\ud }$, we see that the composition 
  $$
  A_0(X_{\ud })_{\QQ }\stackrel{\alb }{\lra }
  \Alb _{X_{\ud }/\ud }(\ud )_{\QQ }=B(\ud )_{\QQ }
  \stackrel{\theta }{\lra }
  A_0(X_{\ud })_{\QQ }
  $$
is the identity. It follows that
  $$
  T(X_{\ud })_{\QQ }=0\; .
  $$
Since the Albanese kernel is torsion free by Ro\u \i tman's theorem, see \cite{Roitman} and \cite{MilneRoitmanThm}, we obtain that
  $$
  T(X_{\ud })=0\; ,
  $$
as required. 

Now, suppose that the cohomological Gysin homomorphism
  $$
  H^1(C_{\bar \eta },\QQ _l)\to H^3(X_{\bar \eta },\QQ _l(1))
  $$
is not injective. It follows that the homomorphism
  $$
  \alpha _{\bar \eta }:A=\Jac _{C/\bar \eta }\to 
  B=\Alb _{X_{\bar \eta }/\bar \eta }
  $$
is not an isogeny over $\overline {k(t)}$. Then
  $$
  \alpha _{\ud }:A_{\ud }\to B_{\ud }
  $$
is not an isogeny over the universal domain $\ud $. Since $C$ is a hyperplane section of $X$, the homomorphism $\alpha $ is surjective, and so is the homomorphism $\alpha _{\ud }$. It follows that its kernel is not finite, whence
  $$
  A_{\bar \eta ,1}\neq 0\; .
  $$

Therefore, if $A_0(X_{\ud })$ is finite-dimensional, then $T(X_{\ud })=0$ and 
  $$
  A_{\bar \eta ,0}=A_{\bar \eta ,1}\neq 0\; .
  $$
Hence $\Gys _0(\ud )$ contains the set $A_{\bar \eta ,0}(\ud )$, which is uncounable. 
\end{pf}

\subsection{Transcendental elements and third dichotomy (Theorem C)}

The aim of this last section is to prove Theorem C in the introduction. The key ingredient here is again the Suslin-Voevodsky's representability theorem and the proven Theorems A and B. Let us start with some simple lemmas. 

\begin{lemma}
\label{simplelemma1}
Let $k$ be a field, and let $S$ be a set of elements algebraically independent over $k$. Let $S_1$ and $S_2$ be two disjoint subsets in $S$. Then
  $$
  \overline {k(S_1)}\cap \overline {k(S_2)}=\overline k
  $$
inside $\overline {k(S)}$.
\end{lemma}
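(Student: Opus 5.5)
The plan is to reduce everything to a statement about finitely many elements. Take an element $\alpha \in \overline {k(S_1)}\cap \overline {k(S_2)}$. Since $\alpha $ is algebraic over $k(S_1)$, its minimal polynomial involves only finitely many coefficients. Hence there is a finite subset $F_1\subset S_1$ with $\alpha $ algebraic over $k(F_1)$. In the same way we get a finite subset $F_2\subset S_2$ with $\alpha $ algebraic over $k(F_2)$. So we may assume from the start that $S_1$ and $S_2$ are finite, say $S_1=\{ x_1,\dots ,x_m\} $ and $S_2=\{ y_1,\dots ,y_n\} $. Put $L=\overline k$ and work inside $\overline {k(S)}$. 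The goal is to show that $\alpha $ is algebraic over $k$; then $\alpha \in \overline k$, and the inclusion $\overline k\subset \overline {k(S_1)}\cap \overline {k(S_2)}$ is obvious.

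The key step is a transcendence degree count in the field $M=L(S_1)(\alpha )$. Since $\alpha $ is algebraic over $k(S_1)$, the field $M$ is algebraic over $L(S_1)$, so
$$
\trdeg (M/L)=m\; .
$$
Now look at $M(S_2)=L(S_1,S_2,\alpha )$. Here $\alpha $ is algebraic over $k(S_2)$, so $M(S_2)$ is algebraic over $L(S_1\cup S_2)$. Because $S_1\cup S_2$ is algebraically independent over $k$, and hence also over $L$, we get
$$
\trdeg (M(S_2)/L)=m+n\; .
$$
It follows that $\trdeg (M(S_2)/M)=n$, which means $S_2$ stays algebraically independent over $M$.

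Next consider the element $\alpha $ over $L(S_2)$. It is algebraic over $L(S_2)$, so some nonzero polynomial $P(T)$ with coefficients in $L[S_2]$ satisfies $P(\alpha )=0$. Write
$$
P(T)=\sum _\mu c_\mu (T)\, y^\mu \; ,
$$
where the sum runs over monomials $y^\mu $ in $S_2$ and $c_\mu (T)\in L[T]$. Then $\sum _\mu c_\mu (\alpha )y^\mu =0$ is a polynomial relation in $S_2$ with coefficients $c_\mu (\alpha )\in M$. Since $S_2$ is independent over $M$, every $c_\mu (\alpha )$ vanishes. As $P\neq 0$, some $c_\mu $ is a nonzero polynomial over $L$, and $\alpha $ is a root of it. Therefore $\alpha $ is algebraic over $L$, that is, $\alpha \in \overline k$.

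I expect the main obstacle to be the bookkeeping in the transcendence-degree step. One has to check that independence over $k$ passes to independence over $\overline k$, which holds because $\overline k/k$ is algebraic and so transcendence degrees agree. One also has to make sure that all fields are taken inside the single ambient field $\overline {k(S)}$. The reduction to finite $S_1$ and $S_2$ is routine.
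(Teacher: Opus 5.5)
Your proof is correct and follows essentially the same route as the paper: reduce to finite $S_1,S_2$, show by a transcendence-degree count that $S_2$ stays algebraically independent over $k(S_1)(\alpha)$, and then read off a nonzero polynomial over the base that kills $\alpha$ from the coefficients of the monomials in $S_2$. The differences are cosmetic. You work over $\overline{k}$ rather than $k$. You also justify the count by additivity of transcendence degree, where the paper picks a transcendence basis inside $S_2$ and asserts that the degree is $n_2$.
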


\begin{pf}
Without loss of generality, we may assume that the sets $S_1$ and $S_2$ are finite. 

Let 
  $$
  \alpha \in \overline {k(S)}
  $$ 
be an element algebraic over both $k(S_1)$ and $k(S_2)$. Consider the diagram of field extensions 

  $$
   \xymatrix{
   & & k(S_1)(\alpha )\ar[rrdd]^-{n_2} & & \\ \\
   k\ar[rruu]^-{n_1}\ar[rrdd]_-{n_2} & & & & k(S_1)(S_2)(\alpha )  \\ \\
   & & k(S_2)(\alpha )\ar[rruu]_-{n_1} & & }
   $$

\bigskip

\noindent where $n_i$ is the number of elements in $S_i$ for $i=1,2$.

By Theorem 1.1 in Chapter 8 of Lang's book {\red ???}, there exists a transcendence basis 
  $$
  B\subset S_2
  $$ 
for the extension 
  $$
  k(S_1)(\alpha )\subset k(S_1)(\alpha )(S_2)\; .
  $$
Since the transcendence degree of this extension is $n_2$, it follows that 
  $$
  B=S_2\; .
  $$
That is, $S_2$ is algebraically independent over $k(S_1)(\alpha )$.

But the element $\alpha $ is algebraic over the field $k(S_2)$. Therefore, there exists a nonzero polynomial 
  $$
  f(S_2,t)\in k[S_2][t]\; ,
  $$ 
such that 
  $$
  f(S_2,\alpha )=0\; .
  $$
  
Rewriting this as 
  $$
  f(t)(S_2)\in k[t][S_2]\; ,
  $$
we see that 
  $$
  f(\alpha )(S_2)=0
  $$ 
in the field 
  $$
  k(\alpha )(S_2)\subset k(S_1)(\alpha)(S_2)\; .
  $$

By the independence of $S_2$ over $k(S_1)(\alpha )$, this means that all the coefficients of the polynomial 
  $$
  f(\alpha )(S_2)=\sum _Ig_I(\alpha )S_2^I
  $$ 
must vanish in the field $k(\alpha )$. That is, 
  $$
  g_I(\alpha )=0
  $$ 
for all indices $I$. 

Since $f\neq 0$, at least one $g_I\neq 0$, this implies that $\alpha $ is algebraic over the field $k$. Hence, 
  $$
  \alpha \in \overline k\; .
  $$
\end{pf}

\begin{lemma}
\label{simplelemma2}
Let $L/k$ be a field extension, and let $X$ be a scheme over $k$. For any subfields 
  $$
  k_1,k_2\subset L
  $$ 
containing $k$, one has
  $$
  X(k_1)\cap X(k_2)=X(k_1\cap k_2)\; ,
  $$
where the intersection is taken inside $X(L)$.
\end{lemma}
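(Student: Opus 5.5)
The plan is to reduce to the affine case and then compare coordinates. The inclusion $X(k_1\cap k_2)\subset X(k_1)\cap X(k_2)$ is immediate: a morphism $\Spec (L)\to X$ that factors through $\Spec (k_1\cap k_2)$ also factors through $\Spec (k_1)$ and through $\Spec (k_2)$, via the inclusions $k_1\cap k_2\subset k_i\subset L$. The content is the reverse inclusion. Here $X(K)$ for a subfield $K\subset L$ means the set of $L$-points $x:\Spec (L)\to X$ over $k$ that factor through $\Spec (K)\to \Spec (L)$. Since $\Spec (L)\to \Spec (K)$ is a monomorphism of schemes, such a factorisation is unique, so $X(K)\subset X(L)$ is genuinely a subset.

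For the reverse inclusion, let $x\in X(L)$ lie in both $X(k_1)$ and $X(k_2)$. Let $\xi \in |X|$ be the image point of $x$, and choose an affine open $V=\Spec (R)$ of $X$ containing $\xi $. Any factorisation of $x$ through some $\Spec (K)$ lands in $V$, because $\Spec (K)$ is a single point mapping to $\xi $. Hence it suffices to treat $X=\Spec (R)$ affine. An $L$-point is then a $k$-algebra homomorphism $\varphi :R\to L$. The condition $x\in X(K)$ says exactly that $\varphi (R)\subset K$.

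If $\varphi (R)\subset k_1$ and $\varphi (R)\subset k_2$, then $\varphi (R)\subset k_1\cap k_2$. The field $k_1\cap k_2$ contains $k$, so $\varphi $ factors as a $k$-algebra map $R\to k_1\cap k_2$, which gives the required point of $X(k_1\cap k_2)$. Equivalently, one can argue with residue fields: $x$ factors through $\Spec (K)$ if and only if the image of $\kappa (\xi )\to L$ lies in $K$.

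There is no real obstacle. The only point needing care is the reduction to the affine case, where one must make sure the two factorisations land in the same affine chart. This holds because all of them pass through the single point $\xi $. If $X$ is not assumed to be of finite type, nothing changes, since the residue-field formulation works for any scheme.
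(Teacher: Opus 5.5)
Your argument is correct and is essentially the paper's proof made precise. Where the paper only asserts that the two factorisations ``factor through a common subfield over which the morphism is defined'', you identify that subfield as the image of $\kappa (\xi )\to L$ (equivalently $\varphi (R)$ in an affine chart), and the intersection step is then immediate. One small slip: the factorisation through $\Spec (K)$ is unique because $\Spec (L)\to \Spec (K)$ is an epimorphism of schemes (since $K\to L$ is injective), not a monomorphism, which it is not unless $K=L$.
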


\begin{pf}
A $k_i$-rational point of $X$, for $i=1,2$, is a morphism of schemes 
  $$
  \Spec (k_i)\to X
  $$
over $k$. By functoriality, the sets $X(k_1)$ and $X(k_2)$ are naturally subsets in $X(L)$, and we are considering their intersection inside $X(L)$.

Suppose 
  $$
  x\in X(k_1)\cap X(k_2)\; .
  $$
Then $x$ corresponds to morphisms 
  $$
  \Spec (k_1) \to X\qqand \Spec (k_2)\to X
  $$ 
that agree when composed with the inclusions of $k_1$ and $k_2$ in to the field $L$. Then they must both factor through a common subfield over which the morphism is defined. This common subfield is actually $k_1\cap k_2$.

Thus $x$ arises from a morphism 
  $$
  \Spec (k_1\cap k_2)\to X\; .
  $$
That is,
  $$
  x\in X(k_1 \cap k_2)\; .
  $$

Conversely, any morphism $\Spec (k_1\cap k_2) \to X$ induces morphisms 
  $$
  \Spec (k_1)\to X\qqand \Spec (k_2)\to X\; ,
  $$
which means that 
  $$
  X(k_1\cap k_2)\subset X(k_1)\cap X(k_2)\; .
  $$
Hence,
  $$
  X(k_1)\cap X(k_2)=X(k_1 \cap k_2)\; .
  $$
\end{pf}

\begin{corollary}
\label{simplecorollary}
Let $k$ be a field, let $S$ be a set of elements algebraically independent over $k$, and let $S_1$ and $S_2$ be two disjoint subsets of $S$. Then
  $$
  X(\overline {k(S_1)})\cap X(\overline {k(S_2)})=X(\overline k)\; .
  $$
\end{corollary}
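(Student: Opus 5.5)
The plan is to combine Lemma \ref{simplelemma2} with Lemma \ref{simplelemma1}. All intersections are taken inside $X(L)$ for $L=\overline {k(S)}$; this makes sense because $\overline {k(S_1)}$ and $\overline {k(S_2)}$ are subfields of $\overline {k(S)}$ containing $k$.

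First, apply Lemma \ref{simplelemma2} with $L=\overline {k(S)}$, $k_1=\overline {k(S_1)}$ and $k_2=\overline {k(S_2)}$. This gives
  $$
  X(\overline {k(S_1)})\cap X(\overline {k(S_2)})=X(\overline {k(S_1)}\cap \overline {k(S_2)})\; .
  $$
Second, Lemma \ref{simplelemma1} identifies the intersection of fields:
  $$
  \overline {k(S_1)}\cap \overline {k(S_2)}=\overline k
  $$
inside $\overline {k(S)}$. Here $\overline k$ means the algebraic closure of $k$ taken inside $\overline {k(S)}$, and this is a subfield of both $\overline {k(S_1)}$ and $\overline {k(S_2)}$. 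Substituting the second identity into the first gives the claimed equality $X(\overline {k(S_1)})\cap X(\overline {k(S_2)})=X(\overline k)$.

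The only delicate point is the meaning of "$X(k_1)\subset X(L)$" in Lemma \ref{simplelemma2}. The maps $X(k_i)\to X(L)$ are injective because field extensions give monomorphisms $\Spec (L)\to \Spec (k_i)$. A point common to both sets is a point $x\in |X|$ together with an embedding $\kappa (x)\hookrightarrow L$ over $k$ whose image lies in both $k_1$ and $k_2$. Hence the image lies in $k_1\cap k_2$, which is exactly what that lemma asserts.

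With both lemmas taken as given, nothing further is needed. I would nonetheless remark that the statement should be read with $X$ a scheme over $k$, as in Lemma \ref{simplelemma2}.
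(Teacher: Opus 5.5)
Your proof is correct and is the same as the paper's: Lemma~\ref{simplelemma2} reduces the statement to the field intersection $\overline {k(S_1)}\cap \overline {k(S_2)}$ inside $\overline {k(S)}$, and Lemma~\ref{simplelemma1} identifies that intersection with $\overline k$. One small slip in your side remark: $X(k_i)\to X(L)$ is injective because $k_i\to L$ is injective, i.e.\ $\Spec (L)\to \Spec (k_i)$ is an \emph{epimorphism} of schemes (it is a monomorphism only when $L=k_i$); your description via points $x\in |X|$ and embeddings $\kappa (x)\hookrightarrow L$ is the correct justification.
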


\begin{pf}
By Lemma \ref{simplelemma2}, we have that
  $$
  X(\overline {k(S_1)})\cap X(\overline {k(S_2)})=
  X(\overline {k(S_1)}\cap \overline {k(S_2)})\; .
  $$
By Lemma \ref{simplelemma1}, since $S_1$ and $S_2$ are disjoint and algebraically independent over $k$, their algebraic closures intersect trivially over $\overline k$. In other words,
  $$
  \overline {k(S_1)}\cap \overline {k(S_2)}=\overline k\; .
  $$
Therefore,
  $$
  X(\overline {k(S_1)})\cap X(\overline {k(S_2)})=X(\overline k)\; .
  $$
\end{pf}

\medskip

Let now again $X$ be a smooth projective surface over an algebraically closed field $k$, and let again 
  $$
  f:X\to \PR ^1
  $$ 
be our Lefschetz pencil over $k$. 

Let $k_0$ be the minimal field of definition of the pencil $f$. In other words, there exists a morphism 
  $$
  f_0:X_0\to \PR ^1_{k_0}
  $$
over $k_0$, and the pullback square
  $$
  \diagram
  X \ar[dd]_-{} \ar[rr]^-{} & & \PR ^1_k \ar[dd]^-{} \\ \\
  X_0 \ar[rr]^-{} & & \PR ^1_{k_0}
  \enddiagram
  $$
in the category of schemes over $k_0$, such that $X_0$ is smooth projective, and $k_0$ is the composite of the minimal field of definition of $X$ and the graph of the pencil $f$, in the sense of Weil, see \cite{Weil}. 

Then 
  $$
  k_1=k_0(t)
  $$
is the minimal field of definition of the generic fibre 
  $$
  C=C_{\eta }
  $$
of the pencil $f$.

As above, we fix an embedding
  $$
  k(t)\subset \ud \; .
  $$
Let then 
  $$
  \bar k_1\subset \ud 
  $$ 
be the algebraic closure of the field $k_1$ inside $\ud $. Let also
  $$
  C_{\bar k_1}=C_0\times _{k_0(t)}\bar k_1
  $$
and 
  $$
  X_{\bar k_1}=X_0\times _{k_0(t)}\bar k_1\; .
  $$
Let 
  $$
  \Gys _0(\bar k_1)
  $$ 
be the Gysin kernel induced by the obvious closed immersion 
  $$
  \iota _{\bar k_1}:C_{\bar k_1}\to X_{\bar k_1}\; ,
  $$
and let 
  $$
  \Gys _0(\ud )
  $$ 
be the Gysin kernel induced by the closed immersion 
  $$
  \iota _{\ud }:C_{\ud }\to X_{\ud }\; .
  $$
Naturally,
  $$
  \Gys _0(\bar k_1)\subset \Gys _0(\ud )\; .
  $$

Our last theorem (Theorem C in Introduction) sheds light on arithmetic of Gysin kernel $G_0(\ud )$, provided it is countable.

\begin{theorem}
If the Gysin kernel $G_0(\ud )$ is countable, then 
  $$
  \Gys _0(\ud )=\Gys _0(\bar k_1)\; .
  $$
\end{theorem}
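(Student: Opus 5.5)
Fix a point $x\in \Gys _0(\ud )$; we show $x\in A(\bar k_1)$, where $A=\Jac _{C_0/k_1}$ is the Jacobian of the model of the generic fibre over $k_1=k_0(t)$. The idea is an automorphism argument combined with countability. The point $x$ is defined over a finitely generated extension of $k_1$. Write its field of definition as algebraic over $k_1(S_1)$, where $S_1\subset \ud $ is a finite set algebraically independent over $\bar k_1$. Since $\ud $ has uncountable transcendence degree over $\bar k_1$, we can choose an uncountable family of pairwise disjoint finite sets $S_\lambda $, all of the same cardinality as $S_1$, such that $\bigcup _\lambda S_\lambda $ is algebraically independent over $\bar k_1$. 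For each $\lambda $ there is an automorphism $\sigma _\lambda $ of $\ud $ over $\bar k_1$ that sends $S_1$ bijectively onto $S_\lambda $. Such a $\sigma _\lambda $ exists by extending transcendence bases and using the universality of $\ud $.

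The first step is to check that each $\sigma _\lambda $ preserves the Gysin kernel. The immersion $\iota _{\ud }$ is the base change of $\iota _{\bar k_1}$, so $\sigma _\lambda $ induces a twist isomorphism of $C_{\ud }\to X_{\ud }$ onto itself. It is compatible with push-forward of $0$-cycles, and by Corollary \ref{fieldtwist} it preserves rational equivalence. Hence $\sigma _\lambda (x)\in \Gys _0(\ud )$ for every $\lambda $, where $\sigma _\lambda (x)$ denotes the induced action on $A(\ud )$.

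The second step uses countability. The points $\sigma _\lambda (x)$, as $\lambda $ ranges over an uncountable set, all lie in the countable set $\Gys _0(\ud )$. Therefore two distinct indices $\lambda \neq \mu $ satisfy $\sigma _\lambda (x)=\sigma _\mu (x)=y$. Now $y$ is rational over $\overline {\bar k_1(S_\lambda )}$ and also over $\overline {\bar k_1(S_\mu )}$, with $S_\lambda \cap S_\mu =\emptyset $. Corollary \ref{simplecorollary}, applied to the base field $\bar k_1$, gives $y\in A(\bar k_1)$. Since $\sigma _\lambda $ fixes $\bar k_1$ and is invertible, $x=\sigma _\lambda ^{-1}(y)=y$, so $x\in A(\bar k_1)$. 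Finally, $x$ must lie in $\Gys _0(\bar k_1)$ and not merely in $A(\bar k_1)\cap \Gys _0(\ud )$. This requires injectivity of $A_0(X_{\bar k_1})\to A_0(X_{\ud })$ for an extension of algebraically closed fields, which is the standard specialization/trace argument (Bloch's lemma: the kernel is torsion, and the norm argument kills it). That gives $\Gys _0(\ud )\subseteq \Gys _0(\bar k_1)$; the reverse inclusion is the trivial one noted before the statement.

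The main obstacle is making the automorphism step rigorous. Two things need care. First, the field of definition of $x$ must be a finitely generated extension of $k_1$, so that $S_1$ can be taken finite. Second, automorphisms of $\ud $ over $\bar k_1$ that move $S_1$ to the $S_\lambda $ must exist, which uses that $\ud $ has infinite transcendence degree over each such subfield. I would handle this by choosing a transcendence basis of $\ud $ over $\bar k_1$ containing $S_1$ and all the $S_\lambda $, and permuting it so that the complement keeps the same cardinality. The resulting automorphism of the purely transcendental subfield then extends to $\ud $ because $\ud $ is algebraically closed and algebraic over that subfield. The injectivity of base change on Chow groups I would cite rather than reprove.
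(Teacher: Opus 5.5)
Your proof is correct and is essentially the paper's argument. It uses the same ingredients: uncountably many pairwise disjoint finite pieces of a transcendence basis of $\ud $ over $\bar k_1$; transport of the Gysin kernel along field isomorphisms over $\bar k_1$ via Corollary \ref{fieldtwist}; Corollary \ref{simplecorollary} to force points into $A(\bar k_1)$; and countability of $\Gys _0(\ud )$. The final descent rests on the injectivity of $A_0(X_{\bar k_1})\to A_0(X_{\ud })$, which the paper also uses without proof. The clean justification of that injectivity is to spread the rational equivalence out over a $\bar k_1$-variety and specialize at a $\bar k_1$-rational point; a norm argument by itself only shows the kernel is torsion.

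The only difference is packaging. You run a pigeonhole on the orbit of a single point under automorphisms $\sigma _\lambda $ of $\ud $, whereas the paper shows that the disjoint sets $\Gys _0(\overline {k_1(S_i)})\smallsetminus \Gys _0(\bar k_1)$ all have the same cardinality. Your version has the small advantage that it never descends rational equivalences to the intermediate fields $\overline {k_1(S_i)}$.
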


\begin{pf}
Let $S$ be a transcendence basis of the field extension
  $$
  \bar k_1\subset \ud \; .
  $$
As the universal domain is uncountable, so is the set $S$. 
  
Fix a finite subset 
  $$
  S_0\subset S\; .
  $$
Then we can find uncountably many subsets 
  $$
  S_i\subset S\; ,\; \; \; i\in I\; ,
  $$
such that $S_i$ has the same number of elements as the fixed set $S_0$, for all indices $i\in I$, and the sets $S_i$ are pairwise disjoint,
  $$
  S_i\cap S_j=\emptyset 
  $$
for all $i,j\in I$. 

And we can also assume that 
  $$
  0\in I\; ,
  $$
and then $S_0$ is among these subsets in $S$. 

As usual, let
  $$
  A=\Jac _{C/\eta }
  $$
be the Jacobian of the generic fibre $C$ over $\eta $. By Corollary \ref{simplecorollary}, 
  $$
  A(\overline {k_1(S_i)})\cap A(\overline {k_1(S_j)})=A(\bar k_1)\; .
  $$
Then 
  $$
  \Gys _0(\overline {k_1(S_i)})\cap \Gys _0(\overline {k_1(S_j)})
  \subset A(\bar k_1)\cap \Gys _0(\ud )\; ,
  $$
and since
  $$
  A(\bar k_1)\cap \Gys _0(\ud )\subset \Gys _0(\bar k_1)\; ,
  $$
we obtain
  $$
  \Gys _0(\overline {k_1(S_i)})\cap \Gys _0(\overline {k_1(S_j)})
  \subset \Gys _0(\bar k_1)\; .
  $$
But 
  $$
  \Gys _0(\bar k_1)\subset \Gys _0(\overline {k_1(S_i)})
  $$
and
  $$
  \Gys _0(\bar k_1)\subset \Gys _0(\overline {k_1(S_j)})\; .
  $$
Therefore,
  $$
  \Gys _0(\overline {k_1(S_i)})\cap \Gys _0(\overline {k_1(S_j)})=
  \Gys _0(\bar k_1)\; ,
  $$
for any two indices $i$ and $j$ in $I$. 

This gives a disjoint union of sets
  \begin{equation}
  \label{muhomor}
  \coprod _{i\in I}\Gys _0(\overline {k_1(S_i)})\smallsetminus 
  \Gys _0(\bar k_1)
  \end{equation}
inside the Gysin kernel $\Gys _0(\ud )$.

The point here is that the field $\overline {k_1(S_i)}$ is isomorphic to the field $\overline {k_1(S_j)}$ over $\bar k_1$, for each $i$ and $j$. 

Applying Corollary \ref{fieldtwist}, we obtain the commutative diagram
  $$
  \diagram
  0 \ar[r]^-{} & \Gys _0(\bar k_1) \ar[dd]_-{\id } \ar[rr]^-{} & & 
  \Gys _0(\overline {k_1(S_i)}) \ar[dd]_-{\sim } \ar[rr]^-{} & & 
  A(\overline {k_1(S_i)}) \ar[dd]_-{\sim } \\ \\
  0 \ar[r]^-{} & \Gys _0(\bar k_1) \ar[rr]^-{} & & 
  \Gys _0(\overline {k_1(S_j)}) \ar[rr]^-{}  & &
  A(\overline {k_1(S_i)})
  \enddiagram
  $$  
which shows that the sets
  $$
  \Gys _0(\overline {k_1(S_i)})\smallsetminus 
  \Gys _0(\bar k_1)
  $$
are of the same cardinality, for all $i$ in $I$. 

Thus, (\ref{muhomor}) is a disjoint union of uncountably many sets of tyhe same cardinality on $\Gys _0(\ud )$. Therefore, since the latter set is uncountable by assumption, it follows that
  $$
  \Gys _0(\overline {k_1(S_i)})=\Gys _0(\bar k_1)\; ,
  $$
for each $\in I$. 

In particular,
  $$
  \Gys _0(\overline {k_1(S_0)})=\Gys _0(\bar k_1)\; .
  $$

Since $S_0$ was an arbitrary finite subset in $S$, it follows that
  $$
  \Gys _0(\ud )=\Gys _0(\bar k_1)\; .
  $$
\end{pf}

\bigskip

\bigskip

\begin{small}

\end{small}

\bigskip

\bigskip


{\sc Department of Mathematical Sciences, University of Liverpool, Peach Street, Liverpool L69 7ZL, England, UK}


\medskip


{\it E-mail}: {\tt vladimir.guletskii@liverpool.ac.uk}


\bigskip


{\sc School of Mathematical Sciences, Capital Normal University, Beijing 100048, China}


\medskip


{\it E-mail}: {\tt bozhang9511@gmail.com}



\begin{thebibliography}{9999999}



\bibitem[SGA1]{SGA1}
Rev$\hat {\rm e}$tements \'etales et groupe fondamental. S\'eminaire de G\'eom\'etrie Alg\'ebrique du Bois Marie 1960 - 61 (SGA 1) dirig\'e par A. Grothendieck. Augment\'e de deux expos\'es de M. Raynaud. Lecture Notes in Mathematics. Vol. 224 (1971)




\bibitem[SGA7-2]{SGA7-2}
Groupes de monodromie en g\'eom\'etrie alg\'ebrique. S\'eminaire de G\'eom\'etrie Alg\'ebrique du Bois Marie 1967-69 (SGA 7-2) par P. Deligne, N. Katz. Lecture Notes in Mathematics 340 (1973)

\bibitem[Stacks]{StacksProject}
The Stacks Project. {\small {{\url {https://stacks.math.columbia.edu}}}}

\bibitem{Anderson}
O. Anderson. Relative algebraic cycles. PhD thesis 2018

\bibitem{Kalyan&Vovan}
K. Banerjee, V. Guletski\u \i . \'Etale monodromy and rational equivalence for $1$-cycles on cubic hypersurfaces in $\PR ^5$. Matematicheskii Sbornik. Volume 211, Issue 2 (2020) 161 - 200

\bibitem{Debarre}
O. Debarre. Higher-dimensional algebraic geometry. Springer 2001

\bibitem{FreitagKiehl}
E. Freitag, R. Kiehl. Etale Cohomology and the Weil Conjecture. Ergebnisse der Mathematik und ihrer Grenzgebiete. 3 Folge. A Series of Modern Surveys in Mathematics. Volume 13. Springer-Verlag 1988

\bibitem{Fulton}
W. Fulton. Intersection theory. Ergebnisse der Mathematik und ihrer Grenzgebiete. 3 Folge. Band 2. Springer-Verlag 1984



\bibitem{BCinitial}
S. Bloch. $K_2$ of artinian $\QQ $-algebras, with application to algebraic cycles. Communications in Algebra. Volume 3, Issue 5 (1975) 405 - 428

\bibitem{BlochAnExample}
S. Bloch. An example in the theory of algebraic cycles. Lecture Notes in Mathematics 551 (1976) 1 - 29

\bibitem{BlochMurre}
S. Bloch, J. Murre. On the Chow group of certain types of Fano threefolds. Compositio Mathematica. Volume 39, Issue 1 (1979) 47 - 105

\bibitem{BlochLectures}
S. Bloch. Lectures on Algebraic Cycles. Duke University Math. Series IV. Durham, NC. Duke University 1980



\bibitem{Debarre}
O. Debarre. Higher-Dimensional Algebraic Geometry. Springer 2001

\bibitem{WeilConjI}
P. Deligne. La conjecture de Weil I. Publications Math\'ematiques de l'IH\'ES 43 (1974) 273 - 307

\bibitem{WeilConjII}
P. Deligne. La conjecture de Weil II. Publications Math\'ematiques de l'IH\'ES 52 (1980) 137 - 252



\bibitem{FreitagKiehl}
E. Freitag, R. Kiehl. Etale Cohomology and the Weil Conjecture. Ergebnisse der Mathematik und ihrer Grenzgebiete. 3 Folge. Band 13. Springer-Verlag 1988


\bibitem{Hartshorne}
R. Hartshorne. Algebraic Geometry. Springer 1977

\bibitem{HartshorneAmple}
R. Hartshorne. Ample subvarieties of algebraic varieties. Lecture Notes in Mathematics 156. Springer 1970


\bibitem{JannsenMotivicSheaves}
U. Jannsen. Motivic Sheaves and Filtratins on Chow Groups. In "Motives", Proc. Symposia in Pure Math. Vol. 55, Part 1 (1994) 245 - 302

\bibitem{Jouanolou}
J.-P. Jouanolou. Th\'eor\`emes de Bertini et Applications. Birkh\"auser 1983

\bibitem{Kedlaya}
K. Kedlaya. The algebraic closure of the power series field in positive characteristic. Proc. Amer. Math. Soc. 129 (12) (2001) 3461 - 3470

\bibitem{Kollar}
J. Koll\'ar. Rational curves on algebraic varieties. Springer 1996

\bibitem{Lang}
S. Lang. Introduction to algebraic geometry. Interscience Publishers 1958

\bibitem{LangAV}
S. Lang. Abelian varieties. Interscience Publishers 1959


\bibitem{Laumon}
G. Laumon. Homologie \'etale. Ast\'ersique. Tome 36 - 37 (1976) 163 - 188



\bibitem{MilneAV}
J. Milne. Abelian varieties. Available at {\small {{\url {https://www.jmilne.org/math/CourseNotes/AV.pdf}}}}

\bibitem{MilneLEC}
J. Milne. Lectures on \'Etale Cohomology. Available at {\small {{\url {https://www.jmilne.org/math/CourseNotes/LEC.pdf}}}}

\bibitem{MilneRoitmanThm}
J. Milne. Zero cycles on algebraic varieties in nonzero characteristic: Rojtman's theorem. Compositio Mathematica. Vol. 47, No. 3 (1982) 271 - 287


\bibitem{Mumford}
D. Mumford. Rational equivalence of $0$-cycles on surfaces. J. Math. Kyoto Univ. 9 (1969) 195 - 204



\bibitem{PaninAEMHT}
I. Panin (after I. Panin and A. Smirnov). Riemann-Roch theorems for oriented cohomology. In Proceedings of the NATO Advanced Study Institute on Axiomatic, Enriched and Motivic Homotopy Theory. Edited by J. Greenlees (2004) Springer-Science + Business Media, B.V.

\bibitem{PaninHHA}
I. Panin. Oriented cohomology theories of algebraic varieties II. In Homology Homotopy Applications, Vol. 11, No. 1 (2009) 349 - 405

\bibitem{Polishchuk}
A. Polishchuk. Abelian Varieties, Theta Functions and the Fourier Transform.  Cambridge University Press 2003

\bibitem{Poonen}
B. Poonen. Rational points on varieties. Graduate Texts in Mathematics 2017



\bibitem{RoitmanGamma}
A. Ro\u \i tman. On $\Gamma $-equivalence of zero-dimensional cycles. Math. USSR-Sb. Vol. 15 No. 4 (1971) 555 - 567

\bibitem{Roitman}
A. Roitman. The torsion of the group of $0$-cycles modulo rational equivalence. Annals of Mathematics. Second Series 111 (3) (1980) 553 - 569


\bibitem{RydhThesis}
D. Rydh. Families of cycles and the Chow scheme. PhD thesis. May 2008


\bibitem{SchoemannWerner}
C. Schoemann, S. Werner. The kernel of the Gysin homomorphism for positive characteristic. Available at {\small {{\url {https://arxiv.org/abs/2411.11417v3}}}}

\bibitem{SuslinVoevodsky}
A. Suslin, V. Voevodsky. Singular homology of abstract algebraic varieties. Inventiones mathematicae. Vol. 123, Issue 1 (1996) 61 - 94




\bibitem{VoisinBook}
C. Voisin. Hodge Theory and Complex Algebraic Geometry, Volume I $\& $ II. Cambridge studies in advanced mathematics 76 $\& $ 77 (2002)

\bibitem{VoisinSymplInv}
C. Voisin. Symplectic involutions of $K3$-surfaces act trivially on $CH_0$. Documenta Mathematica 17 (2012) 851 - 860


\bibitem{Weibel}
C. Weibel. The K-book: An Introduction to Algebraic K-theory. Graduate Studies in Mathematics 145. AMS 2013

\bibitem{Weil}
A. Weil. Foundations of Algebraic Geometry. AMS 1946

\end{thebibliography}
\end{document}